%\%\newcount\notenumber
%\def\clearnotenumber{\notenumber=0}
%\def\note{\advance\notenumber by 1
%\footnote{$^{(\the\notenumber)}$}}

%% LyX 2.0.0 created this file.  For more info, see http://www.lyx.org/.
%% Do not edit unless you really know what you are doing.
\documentclass[a4paper,twoside,english]{amsart}
\usepackage[T1]{fontenc}
%\usepackage[latin9]{inputenc}
%\usepackage{pdfrender,xcolor}
%\synctex=-1
\usepackage{babel}
\usepackage{verbatim}
\usepackage{amsthm}
\usepackage{tikz}
\usepackage{tikz-cd}
\usepackage{amsmath}
\usepackage{amssymb}
\usepackage[unicode=true,
 bookmarks=true,bookmarksnumbered=true,bookmarksopen=false,
 breaklinks=false,pdfborder={0 0 1},backref=false,colorlinks=false]
 {hyperref}
\hypersetup{
 pdfauthor={V. Mantova and U. Zannier}}

\makeatletter

%%%%%%%%%%%%%%%%%%%%%%%%%%%%%% LyX specific LaTeX commands.
\pdfpageheight\paperheight 
\pdfpagewidth\paperwidth

%%%%%%%%%%%%%%%%%%%%%%%%%%%%%% Textclass specific LaTeX commands.
\theoremstyle{plain}
\newtheorem{thm}{\protect\theoremname}[section]
 \newcommand\thmsname{\protect\theoremname}
 \newcommand\nm@thmtype{theorem}
 \theoremstyle{plain}

  \theoremstyle{remark}
  \newtheorem{rem}[thm]{\protect\remarkname}
  \theoremstyle{definition}
  \newtheorem*{example*}{\protect\examplename}
  \theoremstyle{definition}
  
  \theoremstyle{definition}
  \newtheorem{definition}[thm]{\protect\definitionname}
  \theoremstyle{plain}
  \newtheorem{lem}[thm]{\protect\lemmaname}
  \theoremstyle{plain}
  \newtheorem{prop}[thm]{\protect\propositionname}
  \theoremstyle{plain}
  \newtheorem{cor}[thm]{\protect\corollaryname}
   \theoremstyle{plain}
  
     \theoremstyle{plain}
  \newtheorem{prob}[thm]{\protect\problemname}

 %%%%%%%%%%%%%%%%%%%%%%%%%%%%%% User specified LaTeX commands.
\makeatletter
  \theoremstyle{definition}
  \newtheorem{my@rem}[thm]{Remark}
  \renewenvironment{rem}{\begin{my@rem}}{\end{my@rem}}
\makeatother

\makeatother
	\providecommand{\conjecturename}{Conjecture}
  \providecommand{\examplename}{Example}
  	\providecommand{\definitionname}{Definition}
  \providecommand{\lemmaname}{Lemma}
  \providecommand{\propositionname}{Proposition}
  \providecommand{\remarkname}{Remark}
  \providecommand{\theoremname}{Theorem}
\providecommand{\theoremname}{Theorem}
 \providecommand{\corollaryname}{Corollary}
  \providecommand{\problemname}{Problem}
%\usepackage{calc}
%\setlength\paperwidth{\textwidth+1cm}
%\setlength\paperheight{\textheight+\headheight+\headsep+\footskip+1cm}
%\setlength\oddsidemargin{-1in+.5cm}
%\setlength\topmargin{-1in+.5cm}

%%% VM_4 -- Margini.
\usepackage[left=3.5cm,right=3.9cm,top=3cm,bottom=3cm]{geometry}

\usepackage{mathtools}

% BOBO \newtheorem{thm}{Theorem}[section]
%\newtheorem{lemma}[thm][{Lemma} 

%\mathchardef\nmid="3\msbfam2D
%\def\Bbb#1{{\fam\msbfam\relax#1}}

\DeclareMathOperator{\Spec}{Spec}

\def\Z{{\Bbb Z}}

\def\O{{\mathcal O}}

\def\Gal{\mathop{\rm Gal}\nolimits}

\def\CVD{{\hfill\hfil{\lower 2pt\hbox{\vrule\vbox to 7pt
{\hrule width  5pt\varphifill\hrule}\varphirule}}}\par}

\newcommand{\Cu}{\mathcal{C}}
\def\z{{\zeta}}

%\magnification=1200

\title[The Hilbert Property for integral points of affine smooth cubic surfaces]{The Hilbert Property for integral points of affine smooth cubic surfaces}
\author{Simone Coccia}%\date{}
\begin{document}
%\pdfrender{StrokeColor=black,TextRenderingMode=2,LineWidth=0.1pt}
\maketitle

\noindent {\bf Abstract}. In this paper we prove that the set of $S$-integral points of the smooth cubic surfaces in $\mathbb{A}^3$ over a number field $k$ is not thin, for suitable $k$ and $S$.
As a corollary, we obtain results on the complement in $\mathbb{P}^2$ of a smooth cubic curve, improving on Beukers' proof that the $S$-integral points are Zariski dense, for suitable $S$ and $k$. With our method we reprove Zariski density, but our result is more powerful since it is a stronger form of Zariski density. 
We moreover prove that the rational integer points on the Fermat cubic surface $x^3+y^3+z^3=1$ form a non-thin set and we link our methods to previous results of Lehmer, Miller-Woollett and Mordell.

\bigskip

\section{Introduction}

In order to state our results, we recall the basic terminology and theorems about thin sets of rational points. Our main references are \cite{B-G}, \cite{corvaja}, \cite{CZ}, \cite{FJ}, \cite{L}, \cite{SeMW}, \cite{SeTGT}, \cite{V}. 

In this paper we assume that algebraic varieties are integral over $k$ and quasi-projective, where $k$ is a number field. By {\it cover} of algebraic varieties $\pi: Y\to X$ we shall mean a {\it dominant rational map of finite degree} with $Y$ irreducible.

\begin{definition}
	Let $X$ be an irreducible $k$-algebraic variety of dimension $n$ and let $\Omega \subseteq X(k)$. We say that $\Omega$ is a \emph{thin} subset of $X(k)$ if there are finitely many irreducible $k$-covers $\pi_i \colon Y_i \to X$ with $\pi_i$ of degree $\ge 2$, such that $\Omega \subseteq \bigcup \pi_i(Y_i(k))$.  
\end{definition}
One can easily prove that if a set of rational points lies in a proper closed subvariety, then it is thin, so that being non-thin is a stronger version of Zariski-density.

\begin{definition}
	We say that an algebraic variety $X$ over $k$ has the \emph{Hilbert property} (HP) if $X(k)$ is not thin.
\end{definition}
It is easy to see that the Hilbert property is a birational invariant.
The classical Hilbert irreducibility theorem can be formulated in this language as the following:
\begin{thm}
	The projective line $\mathbb{P}^1_k$ over a number field $k$ has the Hilbert property.
\end{thm}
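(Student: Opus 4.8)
The statement is Hilbert's irreducibility theorem in disguise, and the plan is to show that no finite union of images of covers of degree $\geq 2$ can exhaust $\mathbb{P}^1(k)$. The tool will be a height count: I will prove that the image $\pi_i(Y_i(k))$ of each individual cover meets $\mathbb{P}^1(k)$ only in a sparse set, and then compare with the abundance of rational points on $\mathbb{P}^1$ furnished by Schanuel's theorem.

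First I would fix a single cover $\pi\colon Y\to\mathbb{P}^1$ of degree $d\geq 2$. Replacing $Y$ by its smooth projective model — which alters neither $\deg\pi$ nor the genus, both being birational invariants — I may assume $Y$ is a smooth projective curve and $\pi$ a finite morphism. Equipping $Y$ with the height $H_Y$ attached to $\pi^*\mathcal{O}_{\mathbb{P}^1}(1)$, a line bundle of degree $d\geq 2$, functoriality of heights gives $H(\pi(y))=H_Y(y)$ for every $y\in Y(k)$. The goal is then to estimate $N_i(B):=\#\{t_0\in\pi_i(Y_i(k)):H(t_0)\leq B\}$ and show that it is $o\bigl(B^{2[k:\mathbb{Q}]}\bigr)$.

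I would carry out this estimate by a trichotomy on the genus $g$ of $Y$. If $g\geq 2$, then $Y(k)$ is finite by Faltings' theorem, so $N_i(B)=O(1)$. If $g=1$, then either $Y(k)=\emptyset$, or $Y$ is an elliptic curve with $Y(k)$ finitely generated of some rank $r$ by the Mordell--Weil theorem; comparing the naive and canonical heights yields $N_i(B)=O\bigl((\log B)^{r/2}\bigr)$. The decisive case is $g=0$: here $Y\cong\mathbb{P}^1$ and $\pi$ is a rational map of degree $d$, so the standard functoriality estimate $\tfrac{1}{C}H(u)^d\leq H(\pi(u))\leq C\,H(u)^d$ shows that $H(\pi(u))\leq B$ forces $H(u)\ll B^{1/d}$; Schanuel's theorem then bounds the number of such $u$, giving $N_i(B)\ll B^{2[k:\mathbb{Q}]/d}$. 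Since $d\geq 2$, the exponent is at most $[k:\mathbb{Q}]<2[k:\mathbb{Q}]$, so once more $N_i(B)=o\bigl(B^{2[k:\mathbb{Q}]}\bigr)$.

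Summing over the finitely many covers, the union $\bigcup_i\pi_i(Y_i(k))$ contains only $o\bigl(B^{2[k:\mathbb{Q}]}\bigr)$ points of height $\leq B$, whereas Schanuel's theorem gives $\#\{t_0\in\mathbb{P}^1(k):H(t_0)\leq B\}\sim c_k\,B^{2[k:\mathbb{Q}]}$ with $c_k>0$. Hence for $B$ large the complement is nonempty — indeed the overwhelming majority of points of $\mathbb{P}^1(k)$ escape every such union — so no thin set exhausts $\mathbb{P}^1(k)$, which is the Hilbert property. The hard part will be the genus-one case together with the uniformity of the height comparisons: one must control the implied constants (in particular the rank-dependent bound and the passage between naive and canonical heights) so that the $o(\cdot)$-estimate survives the summation over all covers, and one must reduce to a geometrically irreducible situation so that the count is not distorted by an extension of the constant field.
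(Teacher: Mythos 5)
The paper does not actually prove this statement: it is quoted as the classical Hilbert irreducibility theorem, with the surrounding text (and the bibliography entries for Serre and Fried--Jarden) serving as the reference, so there is no internal proof to compare against. Your proposal is a genuine proof, and it is correct in outline: it is the standard counting proof of Hilbert irreducibility over number fields, essentially the one in Serre's \emph{Lectures on the Mordell-Weil Theorem}, where one shows that a thin subset of $\mathbb{P}^1(k)$ contains only $O\bigl(B^{[k:\mathbb{Q}]}\bigr)$ points of height at most $B$, against Schanuel's count $\sim c_k B^{2[k:\mathbb{Q}]}$ for all of $\mathbb{P}^1(k)$. Three small points need patching. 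First, in the genus-$0$ case one has $Y\cong\mathbb{P}^1_k$ only when $Y(k)\neq\emptyset$; if $Y(k)=\emptyset$ the image is empty and contributes nothing, so the trichotomy survives, but this should be said. Second, the reduction to geometrically irreducible $Y_i$, which you flag at the end, is needed but easy: a $k$-irreducible curve that is not geometrically irreducible has all of its $k$-rational points in the finite set where distinct conjugate geometric components meet, so such covers contribute $O(1)$ to the count. Third, your worry about uniformity of implied constants is vacuous: thinness involves only \emph{finitely many} covers, so one sums finitely many $o\bigl(B^{2[k:\mathbb{Q}]}\bigr)$ bounds, each with its own constants, and no uniformity over an infinite family is required. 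Finally, two cosmetic remarks: heights attached to line bundles are defined only up to bounded factors, so $H(\pi(y))=H_Y(y)$ should read $H(\pi(y))\asymp H_Y(y)$ (this changes nothing); and invoking Faltings for $g\ge 2$ is legitimate but overkill, since Mumford's gap principle, or even cruder lift-and-count estimates, already give a bound that is $o\bigl(B^{2[k:\mathbb{Q}]}\bigr)$, which keeps the argument at the pre-Faltings level where it historically lives.
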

From this result one can deduce that all projective spaces (and so all rational varieties) have the HP.

The main inspiration for this work comes from the recent paper \cite{CZ1} of Corvaja and Zannier, where they unveil a relation between the HP and the topology of algebraic varieties, more specifically the ramification of its covers. They prove that for a variety $X$ to have the HP, it is necessary that its non-trivial covers ramify somewhere, i.e. that $X$ is algebraically simply connected. This is a consequence of the Chevalley-Weil theorem, which to the contrary predicts the lifting of all rational points in presence of an unramified cover: the Hilbert property is then an \lq\lq opposite\rq\rq{} of the Chevalley-Weil theorem. Building on this analogy Corvaja and Zannier asked the following:
\begin{prob}[\cite{CZ1}]\label{potente}
	Does every smooth projective algebraically simply connected variety over a number field $k$ with a Zariski dense set of rational points have the HP over $k$?
\end{prob}
See also \S$3.8$ of \cite{CZ} for more discussions about the problem.
In order to understand the power of an affirmative answer, let us say that in the special case of unirational varieties it would imply a positive solution to the Inverse Galois Problem.
Moreover, Corvaja and Zannier gave the first example of non (uni)-rational variety with the HP:
\begin{thm}[\cite{CZ1}]\label{K3}
	The K3 surface defined in $\mathbb{P}^3$ by
	\begin{equation*}
	x^4+y^4=z^4+w^4
	\end{equation*}
	is not (uni)rational over $\mathbb{C}$ and has the Hilbert property over $\mathbb{Q}$.
\end{thm}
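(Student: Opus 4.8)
The plan is to treat the two assertions separately; the non-unirationality is soft, and the Hilbert property is the substantial part.

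For the non-unirationality over $\mathbb{C}$, I would first record that $X\subset\mathbb{P}^3$ is a smooth quartic: the gradient of $x^4+y^4-z^4-w^4$ vanishes only at the origin, so $X$ is smooth, and by adjunction $K_X=(K_{\mathbb{P}^3}+4H)|_X=\mathcal{O}_X$ with $X$ simply connected; thus $X$ is a K3 surface and carries a nowhere-vanishing holomorphic $2$-form, so $h^{2,0}(X)=1$. If $X$ were unirational there would be a dominant rational map from $\mathbb{P}^2$ to $X$; resolving its indeterminacies yields a dominant morphism from a smooth rational surface $W$, and pullback of holomorphic $2$-forms $H^0(X,\Omega^2_X)\hookrightarrow H^0(W,\Omega^2_W)$ is injective because the map is dominant. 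But $H^0(W,\Omega^2_W)=0$, since $W$ is rational and $h^{2,0}$ is a birational invariant, contradicting $h^{2,0}(X)=1$. Hence $X$ is not unirational, a fortiori not rational, over $\mathbb{C}$.

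For the Hilbert property over $\mathbb{Q}$ I would exploit an elliptic fibration. Writing the equation as $(x^2-z^2)(x^2+z^2)=(w^2-y^2)(w^2+y^2)$ and introducing the pencil $x^2-z^2=\lambda(w^2-y^2)$, $x^2+z^2=\lambda^{-1}(w^2+y^2)$ exhibits a fibration $f\colon X\to\mathbb{P}^1$ in $\lambda$ whose generic fibre is an intersection of two quadrics, a curve of genus $1$; one checks it has a section and is non-isotrivial. The strategy is to restrict covers to fibres. Suppose for contradiction that $X(\mathbb{Q})$ is thin, say $X(\mathbb{Q})\subseteq\bigcup_{i=1}^N\pi_i(Y_i(\mathbb{Q}))$ with each $\pi_i\colon Y_i\to X$ of degree $\ge 2$. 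Taking the Stein factorisation $Y_i\to B_i\to\mathbb{P}^1$ of $f\circ\pi_i$, the covers for which $B_i\to\mathbb{P}^1$ has degree $\ge 2$ are harmless: by the Hilbert property of $\mathbb{P}^1$ (the theorem above) the set of $\lambda\in\mathbb{P}^1(\mathbb{Q})$ lying below a $\mathbb{Q}$-point of such a $B_i$ is thin, and for $\lambda$ outside it the fibre $Y_{i,\lambda}$ has no rational points at all. I would thus fix a rational $\lambda=t$ avoiding this thin set, the finitely many degenerate fibres, and the further thin set of $t$ for which $E_t$ acquires a small-degree rational cyclic isogeny.

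For the surviving covers $f\circ\pi_i$ has geometrically connected fibres, so over $\mathbb{Q}(\lambda)$ the map $\pi_i$ restricts to an irreducible cover of the generic fibre $E$ of degree $\ge 2$. Here the geometry enters: I would verify that $E/\mathbb{Q}(\lambda)$ admits no nontrivial cyclic isogeny over $\mathbb{Q}(\lambda)$ (equivalently, that the mod-$\ell$ monodromy on torsion is large), so that the only genus-$1$ covers of $E$ are, up to translation by a section, the multiplication maps $[n]$ with $n\ge 2$. Restricting to the chosen fibre $E_t$, each surviving $\pi_i$ then either has source of geometric genus $\ge 2$, contributing only finitely many points of $E_t(\mathbb{Q})$ by Faltings, or has image a coset $c_i+n_iE_t(\mathbb{Q})$ of a subgroup of index $\ge n_i^{\,r}\ge 2^{r}$, where $r=\operatorname{rank}E_t(\mathbb{Q})$ and $n_i\ge 2$. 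Since $E_t(\mathbb{Q})\subseteq\bigcup_i\pi_{i,t}(Y_{i,t}(\mathbb{Q}))$, after discarding the finitely many points the group $E_t(\mathbb{Q})$ is covered by finitely many cosets of subgroups of index $\ge 2^{r}$; as these cosets have total density at most $\sum_i n_i^{-r}\le N\,2^{-r}$ (B.\,H.\ Neumann's lemma, or a direct height count), they cannot cover all but finitely many points once $2^{r}>N$, a contradiction.

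The whole argument therefore rests on producing, for the given finite family, a single fibre $E_t$ of Mordell--Weil rank exceeding $\log_2 N$; since $N$ is arbitrary this amounts to exhibiting fibres of arbitrarily large rank in $f$. This is the main obstacle: the control of the covers on the generic fibre is geometric and checkable, whereas the existence of high-rank fibres is a genuinely arithmetic input, and it is here that the special multiplicative structure of the diagonal quartic, its relation to the Fermat curve, and Beukers-type explicit families of points would have to be brought in. An alternative that might circumvent unbounded rank is to run the coset/density argument simultaneously across several of the infinitely many distinct elliptic fibrations of $X$, but extracting a contradiction from bounded-rank fibres alone appears to be the crux.
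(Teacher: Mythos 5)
The paper does not actually reprove Theorem \ref{K3}: it quotes it from \cite{CZ1}, and the relevant method is the one it adapts in the proof of Theorem \ref{fregno}. Your first paragraph (non-unirationality via $h^{2,0}=1$ for a K3 versus $h^{2,0}=0$ for a rational surface, using pullback of $2$-forms along a dominant map) is correct and standard, and your opening moves on the Hilbert property --- the fibration $(x^2-z^2)(x^2+z^2)=(w^2-y^2)(w^2+y^2)$, the Stein factorisation to dispose of covers with reducible generic fibre via Hilbert irreducibility on $\mathbb{P}^1$, and Faltings for restricted covers of genus $\ge 2$ --- all match the Corvaja--Zannier scheme.

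The genuine gap is the one you flag yourself: your coset-counting step needs a fibre $E_t$ with $2^{\operatorname{rank}E_t(\mathbb{Q})}>N$, i.e.\ fibres of unbounded Mordell--Weil rank, and no such input is available (producing unbounded rank in an elliptic pencil over $\mathbb{Q}$ is a well-known open problem). The point of the proof in \cite{CZ1} is precisely to avoid any rank growth. In place of a density estimate one invokes the elementary Lemma $3.2$ of \cite{CZ1}, quoted in this paper: if a finitely generated abelian group of \emph{positive} rank minus a finite union of cosets of subgroups is finite, then that complement is empty. Applied to a fibre of positive rank, this upgrades ``all but finitely many points of $E_t(\mathbb{Q})$ lift to the isogeny-type covers'' to ``all points lift''. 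The contradiction then comes from a \emph{second} elliptic fibration $\mu$ (obtained from the factorisation $x^4-w^4=z^4-y^4$ rather than $x^4-z^4=w^4-y^4$): since the K3 surface is simply connected, every cover of degree $\ge 2$ ramifies over some curve, and a cover unramified over the generic $\lambda$-fibre must ramify over a curve meeting the generic $\mu$-fibre; restricting to a $\mu$-fibre gives a ramified cover of a genus-one curve, hence a curve of genus $\ge 2$ with finitely many rational points by Faltings. Playing off ``all points of almost every $\lambda$-fibre lift'' against ``only finitely many points of a suitable $\mu$-fibre lift'', via the fact that the branch points of $\lambda$ restricted to a $\mu$-fibre move with the fibre (the analogue of Lemma \ref{geom}), yields the contradiction. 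So your closing suggestion to use several fibrations is the right instinct, but the working mechanism is the finite-complement-implies-empty lemma together with simple connectedness, not a coset-density count; as written, your argument cannot be completed.
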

In the recent preprint \cite{D} J. L. Demeio provided further examples of varieties with the HP, such as K3 surfaces or quotients of varieties under the action of a finite group.
%Our proof of the IHP for smooth cubic surfaces follows the method of Corvaja and Zannier, based on the existence of two elliptic fibrations. In our case we have two fibrations in multiplicative tori.
Corvaja and Zannier formulated analogous definitions and conjectures for the integer case, which are the object of this paper.
%In order to state them, we recall the notion of $S$-integral point on an algebraic variety. This is basically a generalization of \lq\lq point with $S$-integral coordinates\rq\rq{}, which shows the geometric meaning of integrality on arbitrary algebraic varieties.
\begin{definition}
	We say that an algebraic variety $X/k$ has the \emph{$S$-integral Hilbert property} (IHP) if $X(\O_S)$ is not thin. 
\end{definition}

Equivalently, $X$ has the IHP if given finitely many $k$-covers $\pi_i \colon Y_i \to X$, each without rational sections defined over $\bar k$, the set $X(\mathcal{O}_S)\setminus \bigcup \pi_i(Y_i(k))$ is Zariski-dense. \footnote{	It is immaterial to require that the points are lifted to integral ones, because there is always a finite map which differs from the cover for a closed set, and by Proposition \ref{integerpre} pre-images through finite maps of $S$-integral points are $S'$-integral for an enlargement $S'$ of $S$.}

\begin{rem}\label{riduzione}
	One typical assumption is that the covers are given by finite maps $\pi \colon Y \to X$ where $Y$ is a normal absolutely irreducible variety. In fact, consider a cover $\pi \colon Y \to X$ where $Y$ is an absolutely irreducible variety. By removing a closed set, we can assume that $\pi$ is a dominant morphism. We can mutually decompose $Y$ and $X$ in affine open sets such that $\pi$ restricts to $\pi \colon V \to U$. The map $\pi$ is still a dominant morphism and so it defines an inclusion $\pi^{*}\colon k[U]\hookrightarrow k[V]\subset k(V)=k(Y)$. Taking the integral closure $A$ of $\pi^{*}(k[U])$ in $k(Y)$ one obtains a normal absolutely irreducible affine variety $\tilde V=\Spec(A)$ equipped with a finite morphism $\tilde \pi \colon \tilde V \to U$ and a birational isomorphism $f \colon V \dashrightarrow \tilde V$ such that $\pi=\tilde \pi \circ f$. Gluing these affine spectra we obtain the desired finite map, which differs from $\pi$ only on a closed set, thus proving that this reduction is possible.
\end{rem}

The original version of Hilbert's Irreducibility theorem then becomes:
\begin{thm}\label{hilbert}
	The affine line $\mathbb{A}^1$ has the IHP over $\mathbb{Z}$.
\end{thm}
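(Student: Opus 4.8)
The plan is to take $S=\{\infty\}$, so that $\mathcal{O}_S=\mathbb{Z}$ and $\mathbb{A}^1(\mathcal{O}_S)=\mathbb{Z}$, and to show directly that $\mathbb{Z}$ cannot be covered by the rational images of finitely many covers of degree $\geq 2$. Suppose, for contradiction, that $\mathbb{Z}\subseteq\bigcup_{i=1}^{r}\pi_i(Y_i(\mathbb{Q}))$ for covers $\pi_i\colon Y_i\to\mathbb{A}^1$ of degree $d_i\geq 2$. By Remark \ref{riduzione} I may assume each $Y_i$ is a normal, absolutely irreducible curve and each $\pi_i$ is finite; the case where $Y_i$ is irreducible over $\mathbb{Q}$ but not geometrically irreducible is harmless, since then only finitely many fibres carry a $\mathbb{Q}$-point. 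The strategy is then purely quantitative: I will bound the counting function $\#\{n\in\mathbb{Z}:|n|\leq N,\ n\in\pi_i(Y_i(\mathbb{Q}))\}$ and show it is $o(N)$ for each $i$, so that the finite union cannot exhaust the $\sim 2N$ integers in $[-N,N]$.

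The main reduction is via heights. Extending $\pi_i$ to a morphism of the smooth projective models and using functoriality of the Weil height, a rational point $y\in Y_i(\mathbb{Q})$ with $\pi_i(y)=n$ satisfies $h(n)=d_i\,h(y)+O(1)$; hence an integer value $n$ with $|n|\leq N$ forces $h(y)\leq \tfrac{1}{d_i}\log N+O(1)$. Thus it suffices to count rational points of bounded height on $Y_i$, and I split according to the genus $g_i$ of its smooth model. If $g_i\geq 2$, Faltings' theorem gives finiteness of $Y_i(\mathbb{Q})$, a contribution $O(1)$; if $g_i=1$, the number of rational points of height $\leq H$ grows like $(\log H)^{\rho/2}$ for $\rho$ the rank, giving $O((\log N)^{C})=o(N)$; if $g_i=0$ and $Y_i$ has no rational point, then $Y_i(\mathbb{Q})=\emptyset$ and the image is empty. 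Finitely many exceptional $n$ in the branch locus contribute only $O(1)$ and may be discarded.

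The crux is the case $g_i=0$ with a rational point, where $Y_i\cong\mathbb{P}^1$ and $\pi_i$ is a rational function $R$ of degree $d_i\geq 2$. Here the naive height bound yields only $O(N^{2/d_i})$, which is $o(N)$ as soon as $d_i\geq 3$ but is useless for $d_i=2$; this is precisely the point at which the integrality of $n$, and not merely its rationality, must be exploited, and is the reason the \emph{integral} Hilbert property is strictly stronger than the assertion that $\mathbb{Q}$ is non-thin. For $d_i=2$, solving $R(s)=n$ for $s$ forces a fixed discriminant $D(n)$, a polynomial of degree $\leq 2$ in $n$, to be a perfect square. Since $\pi_i$ is a degree-$2$ cover without a rational section, $D$ is not a square as a polynomial, so $w^2=D(n)$ defines a (nondegenerate) conic, whose integral points number $O(\sqrt N)$ when $\deg D=1$ and $O(\log N)$ (a Pell-type bound) when $\deg D=2$. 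In either case the contribution is $o(N)$.

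Summing the $o(N)$ estimates over the finitely many covers contradicts $\#\{n\in\mathbb{Z}:|n|\leq N\}=2N+O(1)$, which proves the theorem. I expect the main obstacle to be exactly this genus-zero, degree-two regime: it is the only case where rational points are too abundant for the height bound alone, and one must extract from the integrality constraint the sparse, square- or Pell-governed structure of the admissible integers. All of these estimates are classical and amount to the quantitative form of Hilbert irreducibility as developed in \cite{SeMW} and \cite{SeTGT}.
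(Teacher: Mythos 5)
The first thing to note is that the paper contains no proof of Theorem \ref{hilbert} to compare against: the statement is presented as a reformulation of the classical Hilbert irreducibility theorem, with its proof implicitly delegated to the standard references (\cite{SeMW}, \cite{SeTGT}, \cite{FJ}). Your argument is therefore not a variant of anything internal to the paper; it is, in essence, a reconstruction of the classical quantitative proof (the one in Serre's notes \cite{SeMW}), and it is correct in substance. You make the right reductions (finiteness and normality via Remark \ref{riduzione}, and the standard fact that a $\mathbb{Q}$-irreducible but not geometrically irreducible curve has only finitely many rational points), and you correctly isolate the crux: for genus $0$ and degree $2$ the height bound alone gives only $O(N)$, and integrality must enter through the discriminant $D(n)$, a polynomial of degree $\le 2$ which is not a square in $\mathbb{Q}(n)$ because the fibre equation is irreducible over that field; the integers $\lvert n\rvert\le N$ with $D(n)$ a rational square then number $O(\sqrt{N})$ or $O(\log N)$. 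Three points should be tightened, none fatal. (i) The identity $h(n)=d_i\,h(y)+O(1)$ is literally valid only when $Y_i\cong\mathbb{P}^1$; in general one should write $h(n)=h_{\pi_i^{*}\mathcal{O}(1)}(y)+O(1)$, where $\pi_i^{*}\mathcal{O}(1)$ has degree $d_i$ on the smooth projective model, and in the genus-one case compare this height with the canonical height before invoking the $O(T^{\rho/2})$ count for points of canonical height at most $T$; as written this step is only gestured at. (ii) In the degree-two case the conic $w^2=D(n)$ need not be nondegenerate even when $D$ is not a square polynomial (e.g.\ $D(n)=2(n-1)^2$), and you should also dispose of the sub-cases $\deg D\le 0$, the vanishing of the leading coefficient of the fibre equation (at most one $n$), and $\deg D=2$ with square leading coefficient, where the hyperbola has rational asymptotes and factoring gives only $O(1)$ integral points; all of these contribute $O(1)$, and only a positive non-square leading coefficient genuinely produces the Pell-type $O(\log N)$ growth. (iii) Faltings and Mordell--Weil are much heavier tools than the classical proofs require, though their use is harmless. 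With these glosses, the estimate of $o(N)$ per cover against the $2N+O(1)$ integers in $[-N,N]$ is a complete and even effective proof; what your route buys over the paper's bare citation is self-containedness and an explicit counting bound for the integers lying in a thin set, at the cost of invoking deep finiteness theorems where elementary estimates would do.
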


The natural analogue of Problem \ref{potente} is then:

\begin{prob}\label{scihp}
	Any smooth quasi-projective variety topologically simply connected and with a Zariski-dense set of $S$-integral points has the IHP.
\end{prob}

In order to ask a similar question for non-simply connected varieties, one is led to the following definition.

\begin{definition}[Weak integral Hilbert property]
	A normal quasi-projective algebraic variety $X/k$ has the \emph{weak integral Hilbert property} (WIHP) if, given finitely many covers $\pi_i \colon Y_i \to X$ each ramified above a non-empty divisor, the set $X(\mathcal{O}_S)\setminus \bigcup \pi_i(Y_i(k))$ is Zariski dense.
\end{definition}

\begin{prob}\label{wihp}
	If a normal quasi-projective algebraic variety has a Zariski dense set of $S$-integral points then it has the WIHP.
\end{prob}
\begin{rem}
	Clearly for simply connected varieties the IHP is equivalent to the WIHP.
\end{rem}
\begin{rem}
	It could be possible that we should allow an enlargement of $k$ and $S$ to have a positive solution to the problems, as in our result on smooth cubic surfaces. However in the special case of the Fermat cubic surface we will prove the IHP over $\mathbb{Z}$.
\end{rem}

Let us analyze these problems in the case of curves, as a basic example. For $\mathbb{A}^1$ it is exactly the statement of Theorem \ref{hilbert}.
An interesting example showing the difference between the two is $\mathbb{G}_m$, which is not topologically simply connected, since $\pi_1(\mathbb{G}_m)=\pi_1(\mathbb{C}^{*})=\mathbb{Z}$. In fact $\mathbb{G}_m$ is not an instance of Problem \ref{scihp}, as one can see applying the Chevalley-Weil theorem to the unramified morphism $x \mapsto x^n$ from $\mathbb{G}_m$ to $\mathbb{G}_m$. However it is a positive example for Problem \ref{wihp}, since a ramified cover of $\mathbb{G}_m$ has always finitely many $S$-integral points. This is a consequence of Siegel's theorem.
For any other curve Siegel's theorem provides a complete solution, since the $S$-integral points are finite and so the IHP cannot hold.

We finally state the first result of this paper, which is an instance of Problem \ref{scihp} (in its potential version).
\begin{thm}\label{fregno}
	Let $\widetilde F \subset \mathbb{P}^3$ be a smooth cubic surface defined over a number field $k$ and let $H$ be a smooth hyperplane section of $\widetilde F$ (which is a planar cubic curve contained in $\widetilde F$). We define $F\coloneqq \widetilde F \setminus H$. Then there exists an extension $K$ of $k$ and a finite set of places $S$ of $K$ such that $F(\mathcal{O}_S)$ is Zariski-dense and $F$ has the IHP for the $S$-integral points.
\end{thm}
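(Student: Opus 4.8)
The plan is to exploit the natural conic-bundle structure of the cubic surface, to reduce the Hilbert property to $S$-integral Hilbert irreducibility on the base $\mathbb{A}^1$, and to use in an essential way that $F$ carries no nonconstant units.

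First I would record the two geometric facts that drive everything. Since $H$ is a hyperplane section, it is a smooth (hence irreducible) member of the ample anticanonical class $-K_{\widetilde F}$; as $\operatorname{Pic}(\widetilde F)$ is torsion-free and $[H]$ is non-torsion, every $g\in\mathcal{O}(F)^\times$ has $\operatorname{div}(g)=m[H]$ with $m=0$, so $\mathcal{O}(F)^\times=K^\times$: \emph{$F$ has no nonconstant units}. Second, because $\widetilde F$ is simply connected and $[H]$ pairs to $1$ with the class of a line, a meridian argument gives $\pi_1(F(\mathbb{C}))=1$, so every irreducible cover of $F$ of degree $\ge 2$ must ramify over a divisor; in particular each $\pi_i$ occurring in the IHP carries a nonempty branch divisor $R_i$. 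I would then pass to a finite extension $K/k$ containing a line $L\subset\widetilde F$ and project from $L$: blowing up $L$ yields a conic bundle $\beta\colon\widehat F\to\mathbb{P}^1_t$ whose fibres $C_t$ are the residual conics, with $H$ a bisection meeting a general $C_t$ in two points. Restricting to $F$ gives a fibration $\psi\colon F\to\mathbb{P}^1$ whose general fibre is $C_t\setminus(H\cap C_t)\cong\mathbb{G}_m$.

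Next I would set up the arithmetic. Enlarging $K$ so that the elliptic curve $H$ has infinitely many $K$-points, the image of $H(K)$ under the bisection $H\to\mathbb{P}^1$ is a dense set $T\subseteq\mathbb{A}^1(K)$ of values $t$ for which both points of $H\cap C_t$ are $K$-rational; then $C_t\cong\mathbb{P}^1_K$ and $F_t\cong\mathbb{G}_m$ over $K$. Choosing $S$ with $\mathcal{O}_S^\times$ infinite and using Proposition \ref{integerpre} to pass between the integral structures on the fibres and on $F$ (after a fixed enlargement of $S$), each $F_t$ with $t\in T\cap\mathbb{A}^1(\mathcal{O}_S)$ satisfies $F_t(\mathcal{O}_S)\cong\mathcal{O}_S^\times$, which is infinite. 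Letting $t$ range over the Zariski-dense set $T$ already yields the Zariski-density of $F(\mathcal{O}_S)$; the stronger non-thinness requires controlling the covers fibrewise. Taking covers $\pi_i\colon Y_i\to F$ ($1\le i\le m$) of degree $\ge 2$ without section, which by Remark \ref{riduzione} I may assume finite with $Y_i$ normal and absolutely irreducible, for general $t$ the restriction $D_{i,t}\to F_t=\mathbb{G}_m$ is a cover whose branch locus is $R_i\cap F_t$, i.e.\ the trace of the \emph{horizontal} part of $R_i$. If that horizontal part is nonempty, $D_{i,t}\to\mathbb{G}_m$ is ramified over interior points, so by Siegel's theorem (the WIHP of $\mathbb{G}_m$ recalled in the introduction) it captures only finitely many points of $F_t(\mathcal{O}_S)$. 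If instead $R_i$ is purely vertical, then $D_{i,t}\to\mathbb{G}_m$ is étale, hence a twisted Kummer cover $s^{n_i}=c_i(t)\,w$ for some $c_i(t)\in K(t)^\times$, and a point $P\in F_t(\mathcal{O}_S)$ with coordinate $w(P)\in\mathcal{O}_S^\times$ lifts to $Y_i(K)$ only if $c_i(t)\in\mathcal{O}_S^\times\cdot(K^\times)^{n_i}$.

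The decisive point is that this last condition cuts out a \emph{thin} set of $t$: because $\mathcal{O}(F)^\times=K^\times$, the twist $c_i(t)$ cannot be constant modulo $n_i$-th powers and units, since a constant twist would manufacture either a nonconstant unit on $F$ or an everywhere-unramified cover, both impossible. Hence $\{t:c_i(t)\in\mathcal{O}_S^\times(K^\times)^{n_i}\}$ is a finite union of fibres of Kummer covers of the $t$-line and is therefore thin. Removing from $T\cap\mathbb{A}^1(\mathcal{O}_S)$ these finitely many thin sets together with the finitely many exceptional $t$ of the horizontal case — legitimate by the $S$-integral Hilbert irreducibility theorem for $\mathbb{A}^1$ (Theorem \ref{hilbert}) — leaves a non-thin, hence Zariski-dense, set of $t$ over which all but finitely many of the infinitely many points of $F_t(\mathcal{O}_S)$ avoid $\bigcup_i\pi_i(Y_i(K))$, giving the required Zariski-density of $F(\mathcal{O}_S)\setminus\bigcup_i\pi_i(Y_i(K))$. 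I expect the main obstacle to be exactly this vertical/Kummer case: proving rigorously that the class of $c_i(t)$ in $K(t)^\times/\bigl((K(t)^\times)^{n_i}\cdot\mathcal{O}_S^\times\bigr)$ is genuinely nonconstant, which is where the absence of nonconstant units on $F$ — a reflection of its simple connectivity and of the irreducibility and ampleness of the boundary $H$ — is indispensable, and then making everything uniform by choosing a single $S$ that simultaneously governs the integral structures on all fibres, the good reduction of the conics $C_t$, and Siegel's finiteness in the horizontal case.
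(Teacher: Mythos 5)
Your overall architecture (conic fibration over $\mathbb{P}^1$ with $\mathbb{G}_m$-fibres, splitting covers according to whether the branch divisor is horizontal or vertical, Siegel/Pell on the fibres) matches the paper's in outline, but the two steps that carry the real difficulty do not work as written. The decisive one is the purely vertical (``Kummer'') case. Your twist class $c_i(t)\in K(t)^\times/\bigl((K(t)^\times)^{n_i}\cdot \mathcal{O}_S^\times\bigr)$ is not well defined: the fibre coordinate $w$ is only determined up to rescaling by elements of $K(t)^\times$ (and is in fact only defined after the quadratic base change splitting the two conjugate points of $H\cap C_t$, since $H$ is an irreducible bisection), and any such rescaling $w\mapsto h(t)w$ replaces $c_i(t)$ by $c_i(t)/h(t)$ — so one can always normalize $c_i\equiv 1$. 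Consequently the claim that ``a constant twist would manufacture a nonconstant unit or an unramified cover'' has no content, and the set of $t$ for which integral points lift is not cut out by a Kummer cover of the $t$-line: whether $c_i(t)\,w(P)$ is an $n_i$-th power for some $S$-integral $P$ is an arithmetic condition on the $S$-unit group of the fibre, not a thin condition on the base. This is exactly the case for which the paper needs a genuinely new idea: it invokes the group-theoretic Lemma $3.2$ of \cite{CZ1} (if all but finitely many elements of a finitely generated group of positive rank lie in a finite union of cosets, then all do) and then introduces a \emph{second, transversal} conic fibration $\mu$, whose fibres $\mathcal{C}'_m$ meet the vertical ramification divisors of these covers horizontally; the simple connectedness of $\widetilde F\setminus(H\cup L_{3,4})$ (Theorem \ref{top}) guarantees such ramification exists away from $L_{3,4}\cup H$. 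You have no substitute for this second fibration, and I do not see how to complete the vertical case with one fibration only.

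There are two further gaps. First, your good parameter set is the image of $H(K)$ under the degree-two map $H\to\mathbb{P}^1$, which is itself a thin set; removing further thin sets from it by Hilbert irreducibility can leave nothing, and the identification $F_t(\mathcal{O}_S)\cong\mathcal{O}_S^\times$ presupposes both an initial integral point on $C_t$ and an integral model uniform in $t$, neither of which you produce. The paper instead takes the initial integral points from a line $L\subset F$ isomorphic to $\mathbb{A}^1$ (so $L(\mathcal{O}_S)$ is non-thin), feeds them into Theorem \ref{interi} to get infinitely many integral points on each conic through them, and thereby parametrizes the good fibres by a non-thin set. Second, you nowhere treat covers for which $\pi_i^{-1}(C_t)$ is generically \emph{reducible} (the paper's first type); these are handled by descending the reducibility to a nontrivial cover of the base and applying Hilbert irreducibility there, and they cannot be folded into either of your two cases.
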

It will be proven in the next section, following the method of Corvaja and Zannier of the two elliptic fibrations, which in our case are substituted with conic fibrations.
This is a potential result since it could be possible that $F(\O_S)$ is Zariski-dense for smaller $K$ and $S$, but we could have to take a finite extension of the field and enlarge $S$ to ensure that it is non-thin. However the Fermat cubic surface, i.e. the surface in $\mathbb{A}^3$ defined by
\begin{equation}\label{eqferm}
	x^3+y^3+z^3=1,
\end{equation}
will provide a non-trivial instance of a surface satisfying the statement of Problem \ref{scihp} in its full form. This is the content of the following

\begin{thm}\label{fermat}
	The rational integer points on the Fermat cubic surface form a non thin set.
\end{thm}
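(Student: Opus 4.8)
The plan is to run the two-fibration method of Corvaja--Zannier already used for Theorem~\ref{fregno}, but to verify that for the Fermat surface it descends to the base field $\mathbb{Q}$ with $S=\{\infty\}$, so that $\mathcal{O}_S=\mathbb{Z}$. Write the projective surface as $\widetilde F\colon X^3+Y^3+Z^3=W^3$ and $H=\{W=0\}$, so that $F=\widetilde F\setminus H$ is exactly \eqref{eqferm}. On $\widetilde F$ there are lines defined over $\mathbb{Q}$, for instance $\ell_1=\{X=W,\ Y+Z=0\}$ and $\ell_2=\{Y=W,\ X+Z=0\}$; let $f_1,f_2\colon \widetilde F\to\mathbb{P}^1$ be the conic bundles cut out by the pencils of planes through $\ell_1$ and $\ell_2$. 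These two fibrations are independent, meaning $(f_1,f_2)$ is generically finite onto its image, so that no cover of degree $\ge 2$ can be simultaneously pulled back from the base of $f_1$ and from the base of $f_2$.

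The feature that forces genuine integrality over $\mathbb{Z}$, rather than mere $S$-integrality, is the explicit integer parametrization $\phi(t)=(9t^4,\,3t-9t^4,\,1-9t^3)$, which satisfies the identity $(9t^4)^3+(3t-9t^4)^3+(1-9t^3)^3=1$. Since $\phi$ has integer coefficients, $\phi(\mathbb{Z})\subset F(\mathbb{Z})$ with no enlargement of $S$, and one checks that $f_1\circ\phi$ and $f_2\circ\phi$ are non-constant, so that $C_0\coloneqq\overline{\phi(\mathbb{A}^1)}$ is a multisection of both conic bundles. Combining $\phi$ with the coordinate symmetries $S_3$ and with the further integral parametrizations of \eqref{eqferm} coming from the work of Lehmer, Miller--Woollett and Mordell, I would assemble a family $\{C_\alpha\}$ of rational curves on $F$, each carrying infinitely many honest integer points and together Zariski-dense in $F$.

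Suppose for contradiction that $F(\mathbb{Z})$ is thin, so that $F(\mathbb{Z})\subseteq\bigcup_{i=1}^{N}\pi_i(Y_i(\mathbb{Q}))$ for finitely many covers $\pi_i$ of degree $\ge 2$ with $Y_i$ normal and irreducible (Remark~\ref{riduzione}). Fix one curve $C_\alpha$ together with its integer points, and pull the covers back along the normalization $\mathbb{A}^1\xrightarrow{\sim}C_\alpha$. Because the integer points of $\mathbb{A}^1$ are non-thin by the integral Hilbert property of the affine line (Theorem~\ref{hilbert}), they cannot be absorbed by the pulled-back covers unless one of them acquires a section, i.e.\ unless $C_\alpha$ lifts to some $Y_i$. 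Hence every $C_\alpha$ lifts through at least one $\pi_i$, and by pigeonhole a single $\pi_{i_0}$ receives a Zariski-dense subfamily of such lifts.

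The lifts of a Zariski-dense family of curves sweep out a subvariety of $Y_{i_0}$ mapping birationally onto $F$, i.e.\ a rational section of $\pi_{i_0}$; this contradicts $\deg\pi_{i_0}\ge 2$ with $Y_{i_0}$ irreducible. The two fibrations enter precisely to close the only loophole: a cover that splits on the generic fibre of $f_1$ is pulled back from the base of $f_1$, hence restricts nontrivially to the generic fibre of $f_2$ by independence, so it is controlled through the second family of multisections, and no nontrivial cover survives both. The main obstacle is the construction in the second paragraph: a single parametrization only yields a thin (one-dimensional) set, and the per-fibre integer points of the conic bundles are governed by Pell-type equations and are themselves thin on the fibre, so the entire argument hinges on producing \emph{enough} explicit integral parametrizations sweeping out $F$ and on verifying that their specializations stay integral over $\mathbb{Z}$ without enlarging $S$.
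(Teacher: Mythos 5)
Your argument breaks at its central step. In the fourth paragraph you claim that if one cover $\pi_{i_0}\colon Y_{i_0}\to F$ of degree $\ge 2$ admits lifts of a Zariski-dense family of rational curves $C_\alpha$, then these lifts \lq\lq sweep out a subvariety of $Y_{i_0}$ mapping birationally onto $F$\rq\rq, i.e.\ a rational section, contradicting $\deg\pi_{i_0}\ge 2$. This is false: each individual curve lifts birationally to \emph{its own} copy inside $Y_{i_0}$, but the union of these lifted curves is a countable union of curves whose Zariski closure can perfectly well be all of $Y_{i_0}$, a surface of degree $\ge 2$ over $F$; no section is produced. The paper itself exhibits the counterexample: the curves $\Cu_n$ (strict transforms of the conics through the integer points $[n+1:1:n]$ of the rational line), which are Zariski dense in $F$ and each carry infinitely many integer points, all lift to the degree-two cover $Y=\{(p,q)\in F\times L_{5,6}\,\vert\, q\in\Cu_{\lambda(p)}\cap L_{5,6}\}$, because each $\Cu_n$ meets $L_{5,6}$ in a rational point. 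So a Zariski-dense family of lifted rational curves is entirely compatible with a degree-two cover, and your pigeonhole step proves nothing. For the same reason your second paragraph is insufficient even as input: the Lehmer/Miller--Woollett/Mordell parametrizations and their $S_3$-translates are strict transforms of conics lying in finitely many pencils, so their integer points form a thin set (as you concede at the end), and no soft argument of the kind you sketch can upgrade a thin Zariski-dense set to a non-thin one.

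What the paper actually does to close this hole is both computational and structural. First, to get infinitely many genuine $\mathbb{Z}$-points on each curve one cannot enlarge $S$: since $\mathbb{Z}^{*}=\{\pm 1\}$ is finite, one needs case (2) of Theorem \ref{interi}, i.e.\ the two points at infinity must be real quadratic conjugates with non-square discriminant, and the paper verifies this by explicit discriminant computations ($\Delta=12n^6-3$ for the first pencil, plus inequality analyses for the two other pencils defined over $\mathbb{Q}$). Second, and crucially, one must prove that the values $\lambda(p)$, as $p$ ranges over the integer points already produced, form a \emph{non-thin} subset of $\mathbb{P}^1$; this is done via a fiber-product argument combining the analogue of Lemma \ref{geom} (the ramification values of $\lambda$ restricted to a fiber move with the fiber parameter) with Proposition \ref{trick} (Siegel's theorem). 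Only then does one have two independent conic fibrations, each with a non-thin set of fibers carrying infinitely many integer points, and the contradiction is obtained not by pigeonhole but by running the full machinery of the proof of Theorem \ref{fregno}: splitting the covers into three types, Hilbert irreducibility for generically reducible fibers, Siegel for covers whose ramification meets the fibers, the finitely-generated-abelian-group lemma for the unramified $\mathbb{G}_m$-isogeny case, and the simple connectedness of $\widetilde F\setminus(H\cup L_{3,4})$ (Theorem \ref{top}) to force ramification visible to the second fibration. None of this is replaced by the one-sentence \lq\lq independence\rq\rq{} remark in your final paragraph.
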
  

The study of the diophantine equation \eqref{eqferm} dates back to Euler, who proved the rationality of the Fermat cubic surface (see section $13.7$ of \cite{hardy}). The interest for this equation was renewed in the fifties by Mordell (\cite{mordell2}) who analyzed more general cubic equations, such as
\begin{equation*}
	x^3+y^3+z^3=k
\end{equation*}
At Mordell's suggestion, Millet and Woollett (\cite{miwo}) found all solutions for $0 \le k \le 100$ and $\lvert z \rvert \le \lvert y \rvert \le \lvert x\rvert \le 3164$ with the help of the electronic computer EDSAC at the Cambridge University Mathematical Laboratory. In the case of the Fermat cubic surface they found 21 non-trivial solutions, i.e. satisfying $(x+y)(y+z)(z+x)\neq 0$, of which $8$ are given by the parametric solution
\begin{equation}\label{parametric}
	x=9t^4, \qquad y=-9t^4+3t, \qquad z=-9t^3+1
\end{equation}
where $t$ is an integer.
It followed another paper of Mordell (\cite{mordell1}) where he proved the infinitude of integer solutions of
\begin{equation*}
	ax^3+ay^3+bz^3=bc^3
\end{equation*}
and observed that four of the solution of Miller-Woollett satisfied
\begin{equation*}
	z-1=-7(x+y)
\end{equation*}
suggesting that these could be found by the method of his paper.
Finally in \cite{lehmer} Lehmer exhibited an infinite sequence of parametric solutions of which \eqref{parametric} is the first. All these solutions satisfy the relation
\begin{equation*}
	1-z=3t^2(x+y)
\end{equation*}
where $t$ is an integer. He pushed forward the observation of Mordell, noticing that the table contains $9$ solutions of the form
\begin{equation}\label{linear}
	\alpha(x+y)=1-z
\end{equation}
where $\alpha$ is an integer.
With our method we are able to find three families of rational curves with infinitely many integral points. As we will see later, equation \eqref{linear} corresponds to one of these families, so that our method subsumes the one of Lehmer and Mordell. It is interesting to note that their method to produce parametric solutions is based on the theory of Pell's equations, as ours, tough implicitly. In fact Theorem \ref{interi} is just a generalization to rational curves of the infinitude of integral solutions of the Pell's equations.

In the last section we are going to prove the WIHP for the complement in $\mathbb{P}^2$ of a smooth cubic, which will be a consequence of Theorem \ref{fregno}. The Zariski density of the $S$-integral points was already proved by Beukers in \cite{beukers} by constructing a family of conics with infinitely many $S$-integral points. However they form a thin set for a cover ramified outside the cubic, so that our result is stronger.

\bigskip 

\noindent{ \bf Acknowledgements}. My deepest thanks go to my advisor Prof. Pietro Corvaja, for having suggested me this topic as my \textit{Tesi di laurea} and having guided me with endless support through the literature and the pitfalls in which I ended up. I would also like to thank Julian Demeio and Prof. Francesco Zucconi for helpful discussions and for the interest shown in this work.

\section{Some results on integral points}

Here we briefly recall the basic definitions and results on integral points on varieties. Our main reference is \cite{corvaja}.

Let $v$ be a finite place of a number field $k$ and $\mathcal{O}_v$ be the valuation ring at $v$, i.e. $\mathcal{O}_v=\{x\in k \colon \lvert x \rvert_v \le 1\}$. We denote by $\mathfrak{m}_v=\{x\in k \colon \lvert x \rvert_v<1 \}$ its maximal ideal and by $k(v)=\mathcal{O}_v/\mathfrak{m}_v$ its residue field (which is finite).
Let us then choose a finite set $S$ of places containing the archimedean ones and define as
\begin{equation*}
\mathcal{O}_S=\{x\in k \colon \lvert x\rvert_v \le 1, \forall v \notin S \}
\end{equation*}
the ring of $S$-integers.
Let $D \subset \mathbb{P}^n$ be a closed subvariety over $k$ which is defined by a homogeneous ideal $I_D \subset k[X_0,\dots,X_n]$. We can consider $I_{D,v}\coloneqq I_D \cap \mathcal{O}_v[X_0,\dots,X_n]$ and its image in the quotient ring $\mathcal{O}_v[X_0,\dots,X_n]/\mathfrak{m}_v=k(v)[X_0,\dots,X_n]$. It is still a homogeneous ideal and so it defines a closed subvariety $D_v$ of $\mathbb{P}^n$ over $k(v)$. We say that $P$ reduces modulo $v$ to $D$ if $P_v \in D_v$, where $P_v$ is the reduction modulo $v$ of $P$.

Let $\widetilde X$ be a projective $k$-variety in $\mathbb{P}^n$ and $D \subset \widetilde X$ a subvariety. We say that $P \in \widetilde X$ is integral with respect to $D$ if for no finite place $v$ the point $P$ reduces to $D$. If $S$ is a finite set of places containing the archimedean ones, we say that $P$ is $S$-integral with respect to $D$ if it never reduces to $D$ for any prime \emph{outside} $S$. Usually one writes $X=\widetilde X \setminus D$ and by $X(\mathcal{O}_S)$ we mean the set of $S$-integral points of $\widetilde X$ with respect to $D$. If $D$ is a divisor, it is called \emph{divisor at infinity}, in the sense that one completes the quasi-projective variety $X$ to $\widetilde X$ adding the \lq\lq points at infinity\rq\rq{} contained in $D$.
When $X \subset \mathbb{A}^n \subset \mathbb{P}^n$ is an affine variety, we recover the naive definition using coordinates. Indeed, considering the completion $\widetilde X$ under the canonical embedding $\mathbb{A}^n \hookrightarrow \mathbb{P}^n$ and defining $D=\widetilde X \setminus X$, the $S$-integral points of $\widetilde X$ with respect to $D$ will be the points of $X$ with coordinates in $\mathcal{O}_S$.

We now study the behavior of integral points under morphisms, allowing if necessary an enlargement of $S$.
Let $X_1$ and $X_2$ be quasi-projective algebraic varieties defined over a number field $k$ and let $\pi \colon X_1 \to X_2$ be a $k$-morphism. We can complete both varieties to projective varieties $\widetilde X_1$ and $\widetilde X_2$ in such a way that $\pi$ can be continued to a morphism $\pi \colon \widetilde X_1 \to \widetilde X_2$ with $\pi^{-1}(D_2)=D_1$, where $D_i\coloneqq \widetilde X_i \setminus X_i$.
Usually we will omit any reference to $\widetilde X$ and $D$: any time we will have a morphism between (quasi-projective) varieties it will be implicit that it has been continued to the completions and the divisors at infinity.

We now list some results that we will use in our proofs.

\begin{prop}
	Let $X_1$ and $X_2$ be quasi-projective algebraic varieties defined over a number field $k$ and let $\pi \colon X_1 \to X_2$ be a $k$-morphism. Let $S$ be a finite set of places containing the archimedean ones. Enlarging $S$ if necessary, we have $\pi (X_1(\mathcal{O}_S))\subseteq X_2(\mathcal{O}_S)$. 
\end{prop}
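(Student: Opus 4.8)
The plan is to realize all the data over a ring of $S$-integers and to reduce the claim to the compatibility of reduction modulo $v$ with the morphism $\pi$ at all but finitely many places $v$. By the discussion preceding the statement we may fix projective completions $\widetilde X_1\subset \P^{n_1}$ and $\widetilde X_2\subset \P^{n_2}$ together with an extension of $\pi$ to a morphism $\pi\colon \widetilde X_1\to \widetilde X_2$ of projective $k$-varieties satisfying $\pi^{-1}(D_2)=D_1$, where $D_i=\widetilde X_i\setminus X_i$. The varieties $\widetilde X_i$ and $D_i$ are cut out by finitely many homogeneous polynomials, and $\pi$ is given, on a finite affine cover, by finitely many polynomials; all of these involve only finitely many coefficients of $k$.

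First I would spread out these data. Enlarging $S$ so that every coefficient occurring above lies in $\O_S$, the varieties $\widetilde X_1,\widetilde X_2,D_1,D_2$ and the map $\pi$ all acquire reductions modulo each $v\notin S$: for such $v$ we obtain closed subvarieties $(\widetilde X_i)_v,(D_i)_v\subseteq \P^{n_i}_{k(v)}$, and, since $\pi$ is a genuine morphism (not merely a rational map) defined over $k$, a further enlargement of $S$ makes $\pi$ extend to a morphism of the corresponding $\O_S$-models, whose reduction is a morphism $\pi_v\colon (\widetilde X_1)_v\to (\widetilde X_2)_v$. The key elementary fact is that evaluating a polynomial with $\O_S$-integral coefficients at an $S$-integral point and reducing modulo $v$ gives the same result as reducing the coefficients first; consequently $(\pi(P))_v=\pi_v(P_v)$ for every $P\in\widetilde X_1(k)$ and every $v\notin S$.

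Next I would transfer the relation $\pi^{-1}(D_2)=D_1$ to the fibres over the residue fields. Scheme-theoretic preimage commutes with base change, so $\pi_v^{-1}((D_2)_v)$ is the reduction of $\pi^{-1}(D_2)$, which equals $D_1$ already over $k$; a standard constructibility argument then shows that the equality $\pi_v^{-1}((D_2)_v)=(D_1)_v$ of closed subsets persists for all $v$ outside a further finite enlargement of $S$. With these two facts the conclusion is immediate: fix the enlarged $S$ and take $P\in X_1(\O_S)$, so that $P_v\notin (D_1)_v$ for every $v\notin S$. If $\pi(P)$ failed to be $S$-integral with respect to $D_2$, then $(\pi(P))_v\in (D_2)_v$ for some $v\notin S$; using $(\pi(P))_v=\pi_v(P_v)$ this would force $P_v\in\pi_v^{-1}((D_2)_v)=(D_1)_v$, contradicting the integrality of $P$. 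Hence $\pi(P)\in X_2(\O_S)$, which is exactly the asserted inclusion.

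I expect the genuine difficulty to be concentrated in the second step: controlling how the equality of closed subschemes $\pi^{-1}(D_2)=D_1$ behaves under reduction modulo $v$. An equality valid on the generic fibre can fail on finitely many special fibres, where the reduction of $D_1$ may jump or acquire embedded components, and one must invoke that such degeneracies occur only over a constructible, hence finite, set of closed points of $\Spec\O_S$ in order to absorb them into $S$. The first step, by contrast, is essentially bookkeeping: clearing finitely many denominators and appealing to the functoriality of reduction.
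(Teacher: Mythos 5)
Your proposal is correct, but it follows a genuinely different route from the paper's. The paper's argument is purely affine and elementary: it covers $X_1$ by finitely many affine charts on which $S$-integral points have $S$-integral coordinates, writes $\pi$ on each chart as a tuple of polynomials over $k$, and enlarges $S$ once so that all the (finitely many) coefficients lie in $\mathcal{O}_S$; the image of an $S$-integral point is then visibly a point with $S$-integral coordinates, and no reduction modulo $v$ ever appears. You instead work directly with the reduction-theoretic definition of integrality: spread out the completions, the boundary divisors and the morphism over $\mathcal{O}_S$, use compatibility of reduction with the morphism to get $(\pi(P))_v=\pi_v(P_v)$, and propagate the relation $\pi^{-1}(D_2)=D_1$ to almost all special fibers by constructibility (note that only the inclusion $\pi_v^{-1}\bigl((D_2)_v\bigr)\subseteq (D_1)_v$ is actually needed, and that is exactly the inclusion which spreads out from the generic fiber). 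Your approach is heavier but more robust: it never relies on the claim that integral points have integral coordinates on whichever chart contains them --- a point the paper's sketch glosses over, since an $S$-integral point of $X_1$ need not have $\mathcal{O}_S$-coordinates on an \emph{arbitrary} affine chart through it --- and it works verbatim over any Dedekind base. The paper's approach buys brevity: the only arithmetic input is that polynomials with $\mathcal{O}_S$-coefficients send $\mathcal{O}_S$-tuples to $\mathcal{O}_S$-values. One small repair to your write-up: justify $(\pi(P))_v=\pi_v(P_v)$ by functoriality of the $\mathcal{O}_v$-model (the $k$-point $P$ extends uniquely to an $\mathcal{O}_v$-point by properness, and composing with the model morphism gives the claim) rather than by naive polynomial evaluation, since a chosen local polynomial representation of $\pi$ can have all its values reduce to zero modulo $v$ at the given point; as you have already arranged a morphism of $\mathcal{O}_S$-models, this costs nothing.
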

\begin{comment}
\begin{proof}
	We cover $X_1$ with a finite family of affine open sets $U_\alpha$ isomorphic to affine varieties in $\mathbb{A}^n$. On each $U_\alpha$ we can choose local coordinates in such a way that any $S$-integral point in $X_1$ has $S$-integral coordinates. The morphism $\pi$ is written locally on $U_\alpha$ as 
	\begin{equation*}
	\pi(t_1,\dots,t_n)=(p_1^{\alpha}(t_1,\dots,t_n),\dots, p_m^{\alpha}(t_1,\dots,t_n))
	\end{equation*}
	where the $p^\alpha_i$'s are polynomials in $k[t_1,\dots,t_n]$. Since there are finitely many open affine sets and so finitely many such polynomials, we can enlarge $S$ in order to make the coefficients of all these polynomials $S$-integers. Then the images of the $S$-integral points in $X_1$ will be $S$-integral in $X_2$.
\end{proof}
\end{comment}

\begin{cor}\label{belo}
	Let $X_1$ and $X_2$ be quasi-projective algebraic varieties defined over a number field $k$ and let $\pi \colon X_1 \to X_2$ be a $k$-morphism. Let $S$ be a finite set of places containing the archimedean ones.
	If $\pi$ is dominant and $X_1(\mathcal{O}_S)$ is Zariski-dense, then $X_2(\O_S)$ is Zariski-dense, possibly after an enlargement of $S$. Conversely, if $\pi$ is dominant and $X_2(\O_S)$ is never Zariski-dense for any $S$, then $X_1(\O_S)$ cannot be Zariski-dense.
\end{cor}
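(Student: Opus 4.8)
The plan is to derive both implications from the Proposition that immediately precedes the corollary, which allows me to transport $S$-integral points forward along $\pi$ at the cost of enlarging $S$, and then to reduce the density statements to a purely topological fact about the dominant morphism $\pi$. Concretely, for the forward implication I would first invoke the Proposition to enlarge $S$ so that $\pi(X_1(\mathcal{O}_S)) \subseteq X_2(\mathcal{O}_S)$. Since any set containing a Zariski-dense set is itself Zariski-dense, it then suffices to show that the image $\pi(X_1(\mathcal{O}_S))$ is already Zariski-dense in $X_2$.

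I would prove this by contradiction. Suppose $\pi(X_1(\mathcal{O}_S))$ is contained in some proper closed subset $Z \subsetneq X_2$; then $X_1(\mathcal{O}_S) \subseteq \pi^{-1}(Z)$. The key step is that $\pi^{-1}(Z)$ is a \emph{proper} closed subset of $X_1$, and this is exactly where dominance enters: because $X_2 \setminus Z$ is a non-empty open set and $\pi$ is dominant, the preimage $\pi^{-1}(X_2 \setminus Z)$ is a non-empty open subset of $X_1$ (a dominant morphism meets every non-empty open of the target, so it cannot map all of $X_1$ into a closed set avoiding the generic point of $X_2$). Hence $\pi^{-1}(Z)$ is a proper closed subset of $X_1$, and the inclusion $X_1(\mathcal{O}_S) \subseteq \pi^{-1}(Z)$ contradicts the assumed Zariski density of $X_1(\mathcal{O}_S)$. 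Therefore $\pi(X_1(\mathcal{O}_S))$, and a fortiori $X_2(\mathcal{O}_S)$, is Zariski-dense, which is the first assertion.

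The converse is then simply the contrapositive of the forward implication: if $X_1(\mathcal{O}_S)$ were Zariski-dense for some $S$, the first part would yield an enlargement $S'$ for which $X_2(\mathcal{O}_{S'})$ is Zariski-dense, contradicting the hypothesis that $X_2(\mathcal{O}_S)$ is never dense for any $S$. I do not expect a genuine obstacle here: the only point requiring care is the topological argument with dominance above, and the observation that the enlargement of $S$ in the forward direction is harmless, since enlarging $S$ only enlarges the set of $S$-integral points and hence preserves (indeed can only help) Zariski density.
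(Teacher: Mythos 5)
Your argument is correct and is exactly the intended deduction: the paper leaves this corollary without an explicit proof, treating it as an immediate consequence of the preceding Proposition together with the elementary topological fact (which you prove correctly via dominance) that the image of a Zariski-dense set under a dominant morphism is Zariski-dense. Your handling of the enlargement of $S$ and the contrapositive for the converse are both sound.
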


\begin{prop}\label{integerpre}
	Let $X_1$ and $X_2$ be quasi-projective algebraic varieties defined over a number field $k$.
	Let $\pi \colon X_1 \to X_2$ be a finite map and $S$ a finite set of places containing the archimedean ones. Then there is another finite set of places $S'$, containing $S$, such that $\pi^{-1}(X_2(\O_{S'}))\cap X_1(k) \subseteq X_1(\O_{S'})$. In other terms, the rational preimages of the $S'$-integral points of $X_2$ are $S'$-integral points of $X_1$.
\end{prop}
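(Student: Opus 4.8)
\emph{Strategy.} The plan is to argue one place at a time, reducing the statement to integral dependence of coordinates under the finite morphism. By Remark \ref{riduzione} I may assume that $\pi$ is a finite morphism of normal varieties; since a finite morphism is affine, I cover $X_2$ by finitely many affine opens $U_\alpha\subseteq\A^{m_\alpha}$, so that each $V_\alpha\seteq\pi^{-1}(U_\alpha)$ is an affine open of $X_1$, embedded in some $\A^{n_\alpha}$, and the $V_\alpha$ cover $X_1$. Each restriction $\pi\colon V_\alpha\to U_\alpha$ induces a module-finite inclusion of coordinate rings $A_\alpha\seteq k[U_\alpha]\hookrightarrow B_\alpha\seteq k[V_\alpha]$, so every coordinate function $x_j$ on $V_\alpha$ is integral over $A_\alpha$. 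The goal is then: for a suitable finite $S'\supseteq S$ and for every place $v\notin S'$, the hypothesis that $\pi(P)$ avoids $D_2$ modulo $v$ forces $P$ to avoid $D_1$ modulo $v$.

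\emph{Integral dependence and the choice of $S'$.} The key relation is that, for each of the finitely many coordinates $x_j$ on each of the finitely many charts $V_\alpha$, integrality over $A_\alpha$ yields a monic equation
\begin{equation*}
x_j^{d}+\pi^{*}(g_{d-1})\,x_j^{d-1}+\dots+\pi^{*}(g_0)=0 \quad\text{in } B_\alpha ,
\end{equation*}
where each $g_i\in A_\alpha$ is a polynomial in the coordinates of $U_\alpha$ with coefficients in $k$. There are only finitely many such polynomials $g_i$, so I enlarge $S$ to a finite set $S'\supseteq S$ containing every place at which some coefficient of some $g_i$, or some defining equation of the finitely many charts, fails to be integral, and at which the disjointness $V_\alpha\cap D_1=\varnothing$ or $U_\alpha\cap D_2=\varnothing$ fails to persist; only finitely many places are discarded. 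Now fix $v\notin S'$ and set $Q\seteq\pi(P)$. If $Q\in X_2(\O_{S'})$ then its reduction $\bar Q_v$ avoids $\bar D_2$, hence lies in some reduced chart $\bar U_\alpha$, which means the coordinates of $Q$ on $U_\alpha$ are $v$-integral. Consequently $P\in V_\alpha$, and evaluating the monic relation above at $P$ exhibits each coordinate $x_j(P)$ as a root of a monic polynomial whose coefficients $g_i$ evaluated at the $v$-integral coordinates of $Q$ lie in $\O_v$. Since $\O_v$ is integrally closed in $k$, every $x_j(P)\in\O_v$, so $\bar P_v\in\bar V_\alpha$, which avoids $\bar D_1$.

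\emph{Conclusion and main difficulty.} As this holds for every $v\notin S'$, the point $P$ never reduces into $D_1$ outside $S'$, i.e. $P\in X_1(\O_{S'})$, which is exactly the asserted inclusion $\pi^{-1}(X_2(\O_{S'}))\cap X_1(k)\subseteq X_1(\O_{S'})$. I expect the genuine obstacle to be the interplay between the intrinsic notion of integrality — defined through reduction modulo $v$ relative to the divisors at infinity, with $D_1=\pi^{-1}(D_2)$ — and the coordinate description on the affine charts: the hypothesis $Q\in X_2(\O_{S'})$ does not a priori give $Q$ integral coordinates on a single fixed chart, since the chart containing $\bar Q_v$ may vary with $v$. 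This is precisely what forces the place-by-place argument, in which for each $v$ one selects the chart $U_\alpha$ whose reduction contains $\bar Q_v$; the finiteness of the cover, of the integral-dependence relations, and of the excluded places then guarantees that a single finite $S'\supseteq S$ works uniformly.
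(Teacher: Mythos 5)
Your proof is correct and follows essentially the same route as the paper's own (commented-out) proof: cover by affine charts, use the integral dependence of the coordinate functions of $X_1$ over the pullback of $k[U_\alpha]$, enlarge $S$ to an $S'$ making all coefficients of the monic relations integral, and conclude that rational roots of monic polynomials with $v$-integral coefficients lie in $\O_v$. The only difference is that you spell out the place-by-place chart selection (the chart containing the reduction of $\pi(P)$ may vary with $v$), a point the paper's argument glosses over by simply ``choosing local coordinates so that the $S$-integral points are the ones with $S$-integral coordinates.''
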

\begin{comment}

\begin{proof}
	Covering both $X$ and $Y$ with affine open sets we can reduce to the case of a finite map between affine varieties and we can choose local coordinates so that the $S$-integral points are the ones with $S$-integral coordinates. 
	
	Then suppose that $Y \subseteq \mathbb{A}^n$, so that the coordinates $t_1,\dots,t_n$ define regular functions in $k[Y]$. The morphism $\pi$ is finite and so $k[Y]/\pi^{*}k[X]$ is an integral extension. Then each $t_i$ satisfies an equation
	\begin{equation*}
	t_i^m+p_{m-1}t_i^{m-1}+\cdots+p_1t_i^1+p_0=0
	\end{equation*}
	with $p_j \in \pi^{*}k[X]$. Let us take as $S'$ the set containing $S$ and the primes in the coefficients of the $p_j$'s. Then, for a fixed point in $X(\mathcal{O}_{S'})$, the above equations are monic polynomials with $S'$-integer coefficients, so that the solutions in $k$ are $S'$-integers. Since the above equations define the pre-image of this point, we have the thesis.
\end{proof}
\end{comment}

\begin{prop}\label{trick}
	Let $\Cu$ be a geometrically irreducible rational curve over $k$ with (at least) two points in the divisor at infinity. Let $\pi \colon \mathcal{D} \to \mathcal{C}$ be a finite $k$-morphism of geometrically irreducible curves ramified above (at least) one point in the affine part of $\mathcal{C}$. Then $\mathcal{D}$ has only finitely many $S$-integral points.
\end{prop}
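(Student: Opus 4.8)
The plan is to deduce finiteness from Siegel's theorem on $S$-integral points of curves, which asserts that a smooth geometrically irreducible affine curve over $k$ has infinitely many $S$-integral points only if it has genus $0$ and at most two geometric points in its divisor at infinity. Accordingly, I would first pass to the smooth projective models $\widetilde{\mathcal{C}}$ and $\widetilde{\mathcal{D}}$ and extend $\pi$ to a finite morphism $\widetilde\pi\colon\widetilde{\mathcal{D}}\to\widetilde{\mathcal{C}}$, so that it suffices to prove that either the genus $g$ of $\widetilde{\mathcal{D}}$ satisfies $g\ge 1$, or else $\mathcal{D}$ has at least three geometric points in its divisor at infinity. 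By the conventions fixed above, the divisor at infinity $D_{\mathcal{D}}$ of $\mathcal{D}$ equals $\pi^{-1}(D_{\mathcal{C}})$, so its geometric points are the $\widetilde\pi$-preimages of the points at infinity of $\mathcal{C}$; and since $\mathcal{C}$ is rational, $\widetilde{\mathcal{C}}\cong\mathbb{P}^1$ has genus $0$.

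The heart of the argument is Riemann--Hurwitz applied to $\widetilde\pi$. Write $n=\deg\pi$ and let $\infty_1,\dots,\infty_m$ (with $m\ge 2$ by hypothesis) be the geometric points at infinity of $\mathcal{C}$. For $Q\in\widetilde{\mathcal{C}}$ let $r(Q)$ be the number of distinct geometric points of $\widetilde{\mathcal{D}}$ above $Q$; since $\sum_{P\mapsto Q}e_P=n$, the local ramification contribution is $\sum_{P\mapsto Q}(e_P-1)=n-r(Q)$. Setting $R_\infty\coloneqq\sum_{j=1}^m r(\infty_j)$ (the number of geometric points at infinity of $\mathcal{D}$) and $A\coloneqq\sum_{Q\text{ affine}}(n-r(Q))$ (the ramification over the affine part of $\mathcal{C}$), Riemann--Hurwitz in characteristic $0$ yields
\begin{equation*}
2g-2=-2n+\sum_{P\in\widetilde{\mathcal{D}}}(e_P-1)=(m-2)n-R_\infty+A.
\end{equation*}
The hypothesis that $\pi$ ramifies above at least one point of the affine part of $\mathcal{C}$ forces $A\ge 1$.

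I would then suppose, for contradiction, that $\mathcal{D}$ had infinitely many $S$-integral points. By Siegel's theorem this requires $g=0$ and $R_\infty\le 2$; substituting $g=0$ into the displayed identity gives $R_\infty=(m-2)n+A+2$, and since $m\ge 2$ gives $(m-2)n\ge 0$ while $A\ge 1$, we obtain $R_\infty\ge 3$, contradicting $R_\infty\le 2$. Hence $\mathcal{D}$ has only finitely many $S$-integral points. The only delicate points are the reduction to smooth projective models (so that Siegel and Riemann--Hurwitz apply cleanly, passing through the normalization if $\mathcal{C}$ or $\mathcal{D}$ is singular) and the fact that Siegel's criterion counts \emph{geometric} points at infinity; the hypothesis on $\mathcal{C}$ must therefore be read as providing at least two such geometric points, which is automatic. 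No further Diophantine obstacle arises: the arithmetic input is confined to Siegel's theorem, while Riemann--Hurwitz supplies the purely topological inequality.
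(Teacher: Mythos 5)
Your proof is correct and takes essentially the same route as the paper's: Siegel's theorem combined with the Riemann--Hurwitz formula to show that either the smooth model of $\mathcal{D}$ has positive genus or $\mathcal{D}$ has at least three geometric points at infinity. The paper phrases the Hurwitz step as the impossibility of total ramification over both points at infinity of $\mathcal{C}$ in the presence of affine ramification, which is precisely your inequality $R_\infty=(m-2)n+A+2\ge 3$ written in words.
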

\begin{proof}
	We apply Siegel's theorem: if $\mathcal{D}$ has genus $>0$ we are done, otherwise we prove that $\mathcal{D}$ has at least three points at infinity. Using Hurwitz formula, we have $2(d-1)=\sum_x (e_x-1)>0$, where $e_x$ is the ramification index at $x$ and $d=\deg \pi$, so that $d>1$. Moreover the numerical relation implies that we cannot have total ramification in the two points at infinity, since there is a ramification point in the affine part of $\mathcal{C}$. Then $\mathcal{D}$ has at least three points at infinity.
\end{proof}

\section{Proof of Theorem \ref{fregno}}

First, we need to prove the (potential) Zariski density of $S$-integral points. This result is not new, since it is implicit in Beukers \cite{libro} and explicit in \cite{Hats} of Hassett-Tschinkel, where they proved the following generalization for Del Pezzo surfaces:

\begin{thm}[\cite{Hats}]
	Let $X$ be a smooth Del Pezzo surface and $D$ a smooth anticanonical divisor. Then integral points for $(X, D)$ are potentially dense.
\end{thm}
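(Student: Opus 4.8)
The plan is to exploit a conic fibration on $X$ together with the genus-one structure of $D$. First I would record the basic numerology: since $D$ is anticanonical, $K_X + D \sim 0$, so by adjunction $2p_a(D) - 2 = (K_X + D)\cdot D = 0$, whence $D$ is a smooth curve of genus one. Next, since $X$ is Del Pezzo it carries a conic fibration $\phi \colon X \to \mathbb{P}^1$ (for the low-degree surfaces this comes from a pencil of conics; the few high-degree cases, namely $\mathbb{P}^2$ and the quadrics, can be treated directly). For a generic fibre $F$ one has $F^2 = 0$ and, again by adjunction, $K_X \cdot F = -2$, so that
\begin{equation*}
D \cdot F = -K_X \cdot F = 2.
\end{equation*}
Thus the affine fibre $F \setminus D$ is a smooth rational curve with two points removed.

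The crucial point is to control \emph{when} those two points are rational. Restricting $\phi$ to $D$ gives a degree-two morphism $\phi|_D \colon D \to \mathbb{P}^1$ of the genus-one curve $D$, with associated involution $\iota$; over a value $t = \phi(P)$ the two points $F_t \cap D$ are exactly $\{P, \iota P\}$, and these are $K$-rational precisely when $P \in D(K)$. Hence for every $P \in D(K)$ the affine fibre $F_{\phi(P)} \setminus D$ is isomorphic over $K$ to $\mathbb{G}_m$, the complement of two rational points in $\mathbb{P}^1$. After enlarging $S$ to absorb the relevant denominators, using Proposition \ref{integerpre}, its $S$-integral points then form an infinite set, being in bijection with a coset of the $S$-units $\mathcal{O}_S^{*}$.

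It remains to pass from one fibre to a Zariski-dense family. I would choose the extension $K/k$ so that $D(K)$ is infinite; this is always possible, since every elliptic curve attains positive Mordell--Weil rank over a suitable number field. As $P$ ranges over $D(K)$ the values $t = \phi(P)$ take infinitely many distinct values, each fibre of the degree-two map $\phi|_D$ being finite, producing infinitely many distinct fibres $F_t$, each carrying infinitely many $S$-integral points of $X \setminus D$. A union of infinitely many distinct irreducible curves is Zariski dense, which yields potential density; Corollary \ref{belo} lets one transport this density along the fibration when convenient.

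The main obstacle is the \emph{uniformity} of the integral structure across this family: the definition of potential density requires a single finite set $S$, whereas clearing denominators fibre by fibre would a priori enlarge $S$ without bound. The heart of the argument is therefore to show that, after one fixed enlargement of $S$, infinitely many of the fibres $F_{\phi(P)}$ simultaneously carry $S$-integral points --- equivalently, that the $\mathbb{G}_m$-structure on the generic fibre spreads out to an integral model over an open part of the base, so that all but finitely many good fibres inherit it. Along the way one must also discard the finitely many degenerate fibres of $\phi$, reducible or tangent to $D$, where $F \cap D$ fails to consist of two distinct points and the identification with $\mathbb{G}_m$ breaks down.
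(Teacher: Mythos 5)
There is a genuine gap, and it sits exactly where you yourself locate ``the heart of the argument'': you name the uniformity problem but do not solve it, and the sketch you offer for solving it would not work. Two issues compound. First, already for a single fibre, the claim that the $S$-integral points of $F_{\phi(P)}\setminus D$ ``form an infinite set, being in bijection with a coset of the $S$-units $\mathcal{O}_S^{*}$'' is false as stated: the affine fibre is only \emph{abstractly} isomorphic to $\mathbb{G}_m$ over $K$, while with respect to the integral structure induced by $(X,D)$ it is a twisted, Pell-type model whose set of $S$-integral points may perfectly well be empty. What is true is precisely Theorem \ref{interi}: the set is infinite \emph{provided it is nonempty} and $\mathcal{O}_S^{*}$ is infinite (or the split-place condition holds). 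Rationality of the two points at infinity, which is what your restriction $\phi|_D\colon D\to\mathbb{P}^1$ buys you, is neither necessary nor sufficient; the missing ingredient is a seed $S$-integral point on each fibre. Second, the fibre-by-fibre enlargement of $S$ needed to realize each isomorphism with $\mathbb{G}_m$ cannot be absorbed by ``spreading out the $\mathbb{G}_m$-structure over an open part of the base'': spreading out controls the scheme structure of the family, not the existence of integral points on the special fibres, which remain twisted conics each requiring its own seed solution.

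The paper (which gives this construction for the cubic surface case it needs, citing Hassett--Tschinkel for the general Del Pezzo statement) closes exactly this gap by a different mechanism. It takes the strict transform $L$ of a line through two of the blown-up points, so that $L\cong\mathbb{A}^1$ inside $X\setminus D$; after \emph{one} enlargement of $S$ the set $L(\mathcal{O}_S)$ is infinite (indeed non-thin, by Hilbert irreducibility). Every $p\in L(\mathcal{O}_S)$ is already an $S$-integral point of the surface and lies on the fibre $\Cu_{\lambda(p)}$ of the conic fibration, so it serves as the seed; Theorem \ref{interi}, applied with the same fixed $S$ (enlarged once and for all so that $\mathcal{O}_S^{*}$ is infinite), then yields infinitely many $S$-integral points on each such fibre with no further enlargement, and infinitely many distinct fibres arise this way, giving density. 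To salvage your variant you would have to replace ``points of $D(K)$'' by a supply of integral seed points on the fibres --- that is, reintroduce something playing the role of the line $L$ --- at which point your argument becomes the paper's.
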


Here we present our own construction of a Zariski dense set of $S$-integral points, since it is essential in our proof of Theorems \ref{fregno} and \ref{fermat}.

\subsection{The density of $S$-integral points}

We recall that a smooth cubic surface in $\mathbb{P}^3$ can be realized, over a finite extension of $k$, as the blowup of $\mathbb{P}^2$ in six points $P_1, \dots, P_6$ in general position. By general position we mean that there are no three collinear points and there is no conic passing through all of them. We then have a morphism $\pi \colon \widetilde F \to \mathbb{P}^2$ which is one to one everywhere except over the six points, where the exceptional divisors $E_i=\pi^{-1}(P_i)$ are lines contained in the cubic surface.
The map $\pi$ is birational and its inverse
\begin{equation*}
\pi^{-1} \colon \mathbb{P}^2 \dashrightarrow \widetilde F
\end{equation*}
is given by $[r:s:t] \longmapsto [f_{0}(r,s,t): \dots : f_{3}(r,s,t)]$, where $f_0, \dots, f_3$ are four linearly independent cubic homogeneous polynomials vanishing at the $6$ points. As a consequence, the set of indeterminacy of $\pi^{-1}$ is given by the six points.
The image of $H$ through $\pi$ is a cubic curve $D$ in $\mathbb{P}^2$ passing through the six points. 

We enlarge $k$ so that $P_1,P_2,P_5,P_6$ are defined over $k$ and the couple $\left\{P_3,P_4\right\}$ is defined over $k$. Then we have that the blowup is defined over $k$ %\footnote{This does not mean that the six blown up points are defined over $k$, but only that the birational maps are defined over $k$. We will encounter this instance in the search of rational integer points on the Fermat cubic surface, where the blown-up points will be defined over the third cyclotomic field $\mathbb{Q}(\sqrt{-3})$ but the blow-up will be defined over $\mathbb{Q}$.}
and the linear systems of conics through $P_1,P_2,P_3,P_4$ and $P_3,P_4,P_5,P_6$ are defined over $k$.

There are $15$ lines in $\mathbb{P}^2$ connecting pairs of points in $\left\{P_1,\dots, P_6\right\}$. The strict transforms of these lines correspond to some of the $27$ lines in $\widetilde F$ and intersect the divisor at infinity $H$ in one point. In fact, if the line is not tangent in any of the two blown-up points, it will intersect $D$ in one more point, which corresponds to a point in $H$. Otherwise, if it is tangent in one of the two points, there will be no more intersection with $D$, and the only point at infinity of the strict transform will be the one above the tangency point.

Let $\ell$ be the line joining $P_1$ and $P_6$ (which is then defined over $k$). Its strict transform $L$ in $F$ is then isomorphic to $\mathbb{A}^1$, i.e. to $\mathbb{P}^1$ with one point at infinity. Enlarging $S$ we can make $L(\mathcal{O}_S)$ infinite and by Hilbert irreducibility theorem $L(\mathcal{O}_S)$ is a non-thin set.

We then consider the morphism $\lambda \colon \widetilde F \to \mathbb{P}^1$ given by the composition of the map $\pi$ and the rational map $\mathbb{P}^2 \dashrightarrow \mathbb{P}^1$ which sends the point $P \in F$ to the intersection of $\ell$ and the conic of the linear system through $P$ (we shall soon see that $\lambda$ is indeed a morphism). More precisely we map a point $Q \in F$ to $\pi(Q) \in \mathbb{P}^2$ and consider the conic through $P_1,P_2,P_3,P_4$ and $\pi(Q)$. The intersection between $\ell$ and the conic is given by two points, of which one is $P_1$, which is defined over $k$. Then, if $Q$ is defined over $k$, by the hypotheses on $k$, also the second intersection point will be defined over $k$, and we define it to be the image of the map $\lambda$. The map $\mathbb{P}^2 \dashrightarrow \mathbb{P}^1$ is not a morphism, since it is not defined in $P_1,P_2,P_3,P_4$, but $\lambda$ is a morphism. To show this, we only have to check that it can be extended to the exceptional divisors over $P_1,P_2,P_3,P_4$. In fact, for a given point of, say, $E_1$, there passes exactly one strict transform of a conic of the linear system, and we consider this conic for the above construction.
Thus $\lambda$ defines a morphism $\widetilde F \to \mathbb{P}^1$ with fibers given by the strict transforms of the conics of the linear system. We will write $\Cu_l$ for the fiber corresponding to the value $l$.

We can define in an analogous way the fibration $\mu \colon \widetilde F \to \mathbb{P}^1$ whose fibers are given by the strict transform of the conics through $P_3,P_4,P_5,P_6$. We denote these curves with $\Cu'_m$.

We now observe that $\Cu_l$ is a curve with two points at infinity, since the corresponding conic in $\mathbb{P}^2$ intersects $D \setminus \{P_1,\dots,P_6\}$ in two points. There are only finitely many exceptions that can occur, namely the conic through $P_5$, the one through $P_6$ and the conics tangent to the cubic. Then we have that the general $\Cu_l$ is isomorphic to $\mathbb{G}_m$, i.e. to $\mathbb{P}^1$ minus two points at infinity. It is natural then to study which $\Cu_l$ have infinitely many $S$-integral points. 
In order to do this, we recall the following well known result.

\begin{thm}\label{interi}
	Let $\Cu$ be a smooth rational curve with two points at infinity $P,P'$.
	Suppose that either
	\begin{enumerate}
		\item $\O_S^{*}$ is infinite,
		\item or the points at infinity $P$ and $P'$ are quadratic conjugates and one place of $S$ splits in the corresponding quadratic extension.
	\end{enumerate}
	Then if $\Cu$ contains an $S$-integral point, it has infinitely many ones.
\end{thm}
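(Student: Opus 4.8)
The plan is to reduce the statement to the classical fact that the solutions of a (possibly generalized) Pell equation form a finitely generated group whose rank is computed by Dirichlet's unit theorem.

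First I would use the hypothesis that $\Cu$ contains an $S$-integral point to fix a $k$-rational point on the smooth projective model $\widetilde{\Cu}$; being a smooth rational curve with a rational point, $\widetilde{\Cu}$ is isomorphic to $\mathbb{P}^1_k$, and I fix an affine coordinate $t$ on it. The complement $\widetilde{\Cu}\setminus\Cu=\{P,P'\}$ is a $k$-rational divisor of degree two, so the unordered pair $\{P,P'\}$ is stable under $\Gal(\bar k/k)$: either both $P,P'$ lie in $\mathbb{P}^1(k)$ (the \emph{split} case), or they are conjugate over a quadratic extension $k'/k$ (the \emph{non-split} case). Writing $\alpha,\beta$ for the values of $t$ at $P,P'$, the function $\phi=(t-\alpha)/(t-\beta)$ sends $P\mapsto 0$ and $P'\mapsto\infty$ and identifies $\Cu$ with $\mathbb{G}_m$ over $k$ in the split case and over $k'$ in the non-split case. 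A point $Q\in\Cu(k)$ fails to be $S$-integral exactly when it reduces to $P$ or $P'$ modulo some $v\notin S$, i.e. when $\phi(Q)$ acquires a zero or a pole there; hence, after harmlessly enlarging $S$ to absorb the finitely many primes of bad reduction of the chosen model (permissible by Proposition \ref{integerpre} and Corollary \ref{belo}), the $S$-integral points of $\Cu$ correspond to the values $\phi(Q)$ that are units away from $S$. In the non-split case the condition $Q\in\mathbb{P}^1(k)$ forces $\bar\phi(Q)=1/\phi(Q)$, so that $N_{k'/k}(\phi(Q))=1$.

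This exhibits $\Cu$, based at our integral point, as a one-dimensional torus $T$ over $k$: the split $\mathbb{G}_m$ in the first case and the norm-one torus $R^{(1)}_{k'/k}\mathbb{G}_m$ in the second, under $\phi$ the $S$-integral points becoming a subgroup of $T(\O_S)$. Concretely, in the non-split case $T(\O_S)$ is the group of solutions of a generalized Pell equation $x^2-dy^2=1$ (with $k'=k(\sqrt d)$), whose group law is given by polynomial formulas with $\O_S$-coefficients; in particular every power of an $S$-integral solution is again $S$-integral. Thus a single $S$-integral point generates the whole group, and it suffices to prove that $T(\O_S)$ is infinite under the stated hypotheses. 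In the split case $T(\O_S)=\O_S^*$ has rank $|S|-1$, which is positive precisely when $\O_S^*$ is infinite, i.e. under hypothesis (1). In the non-split case, let $S'$ be the set of places of $k'$ above $S$ and $\O'_{S'}$ the ring of $S'$-integers; then $T(\O_S)$ is the kernel of $N_{k'/k}\colon(\O'_{S'})^*\to\O_S^*$. Sending $\varepsilon$ to $(\log|\varepsilon|_w)_{w\in S'}$ embeds $(\O'_{S'})^*$ as a full lattice in the product-formula hyperplane, and the norm-one condition cuts out the rational subspace defined by $\sum_{w\mid v}\log|\varepsilon|_w=0$ for every $v\in S$; its dimension is $\sum_{v\in S}(\#\{w\mid v\}-1)=|S'|-|S|$, which is exactly the number of places of $S$ that split in $k'$. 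Hence under hypothesis (2) this rank is at least one and $T(\O_S)$ is infinite.

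The main obstacle is this last step, i.e. identifying the rank of the norm-one integral points with the number of split places in $S$: this is the relative form of Dirichlet's theorem, and its geometric content is that a place $v$ splitting in $k'$ produces two places $w\mid v$ among which the single linear relation $\sum_{w\mid v}\log|\varepsilon|_w=0$ leaves one free logarithmic direction, precisely the phenomenon underlying the infinitude of solutions of Pell's equation. A secondary technical point, already flagged above, is that the correspondence between $S$-integral points of the given affine model of $\Cu$ and the units of $T$ is exact only after enlarging $S$ by the finitely many primes entering the coordinate change to the standard torus model; this is harmless by the integrality results of the previous section.
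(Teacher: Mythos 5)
Your argument is correct and is in substance the same as the paper's: the paper's proof merely passes to a planar conic model and cites Corollary $5.3.3$ of Chapter $5$ of \cite{corvaja}, which is precisely the Pell-equation/Dirichlet-unit computation you carry out in full (identifying $\Cu$ with $\mathbb{G}_m$ or with the norm-one torus of the quadratic extension and computing the rank of its group of $S$-units as the number of split places). Your reading of the hypotheses --- condition $(1)$ governing the case of two $k$-rational points at infinity and condition $(2)$ the quadratic-conjugate case --- is the intended one, and the enlargement of $S$ you flag when passing to the standard torus model is glossed over at the same level by the paper's citation, so it does not constitute a gap relative to the paper's own treatment.
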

\begin{proof}
	A smooth rational curve with two points at infinity admits a planar model as a conic $\Cu$ with two points at infinity. We denote by $L$ the line passing through these two points. Then we can use Corollary $5.3.3$ of Chapter $5$ of \cite{corvaja}.
\end{proof}

The hypotheses of this theorem are easily verified for every $\Cu_l$ if we enlarge $S$ so to have $\O_S^{*}$ infinite. Then we will have that any curve $\Cu_l$ passing through an $S$-integral point will contain infinitely many other $S$-integral points. 
The set of curves $\Cu_l$ such that $\Cu_l(\O_S)$ is infinite is parametrized by a non-thin (and thus infinite) set of values $l \in k$, since a curve $\Cu_l$ through $p \in L(\O_S)$ has infinitely many $S$-integral points, and $L(\O_S)$ is not thin over $k$. This gives the Zariski-density of $F(\O_S)$.

Clearly the same properties hold for the curves $\Cu'_m$.

\begin{rem}
	The reader may ask why we did not use the pencil of lines through a blown-up point. The reason will be clear in the proof, where we will use the simple connectedness of $F$ minus some lines. Still, one can prove the theorem using a system of conics and a system of lines. However, in the case of the Fermat cubic surface, there is no system of lines defined over $\mathbb{Q}$, so we preferred this proof.
\end{rem}

\subsection{Topology of complements of curves in smooth cubic surfaces}
 
Let us denote by $L_{3,4}$ the strict transform of the line through $P_3$ and $P_4$. We write
\begin{equation*}
	X=F\setminus L_{3,4}=\widetilde{F}\setminus \left(H \cup L_{3,4} \right)
\end{equation*}
We want to prove the following:
\begin{thm}\label{top}
	The surface $X$ is topologically simply connected.
\end{thm}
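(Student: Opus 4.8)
The plan is to compute the topological fundamental group $\pi_1(X)$ (of the complex points, in the analytic topology) directly from the configuration of the $27$ lines, exploiting that the ambient surface is simply connected. First I would record that $\widetilde F$, being the blow-up of $\mathbb{P}^2$ at six points, is a smooth rational projective surface, so $\pi_1(\widetilde F)=1$. The boundary divisor $D=H\cup L_{3,4}$ has two smooth irreducible components meeting transversally at a single point (one checks $H\cdot L_{3,4}=1$), so it is a normal crossings divisor. By the standard presentation of the fundamental group of the complement of such a divisor in a smooth simply connected surface, the surjection $\pi_1(\widetilde F\setminus D)\to\pi_1(\widetilde F)=1$ has kernel normally generated by the meridians of the components; hence $\pi_1(X)$ is normally generated by the two meridians $\gamma_H$ and $\gamma_L$ of $H$ and $L_{3,4}$. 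It therefore suffices to prove that both meridians are trivial in $\pi_1(X)$.

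The key computational input is a short list of intersection numbers in $\mathrm{Pic}(\widetilde F)=\mathbb{Z}\ell\oplus\bigoplus_i\mathbb{Z}e_i$, where $H=-K=3\ell-\sum_i e_i$ and $L_{3,4}=\ell-e_3-e_4$. I would exhibit two auxiliary lines. First, a line meeting $H$ once and disjoint from $L_{3,4}$, for instance $L_{1,3}=\ell-e_1-e_3$, for which $L_{1,3}\cdot H=1$ and $L_{1,3}\cdot L_{3,4}=0$. Second, a line meeting both $H$ and $L_{3,4}$ exactly once, for instance $L_{1,2}=\ell-e_1-e_2$, for which $L_{1,2}\cdot H=1$ and $L_{1,2}\cdot L_{3,4}=1$ (the exceptional curve $E_3$ works equally well). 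Since each of these intersection numbers equals $1$, all the relevant intersections are single transverse (reduced) points.

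To kill $\gamma_H$: the line in the first case is a $\mathbb{P}^1$ meeting the removed divisor only at the one point $p=L_{1,3}\cap H$, so $L_{1,3}\cap X\cong\mathbb{A}^1$. By transversality a small loop in $L_{1,3}$ around $p$ is a meridian of $H$; but inside $L_{1,3}\cap X=\mathbb{P}^1\setminus\{p\}\cong\mathbb{C}$ this loop encircles the point at infinity and so bounds a disc, hence is contractible. As $H$ is irreducible all its meridians are conjugate, so $\gamma_H=1$. To kill $\gamma_L$: the line in the second case meets the removed divisor in two distinct transverse points $p_H\in H$ and $p_L\in L_{3,4}$, so $L_{1,2}\cap X\cong\mathbb{P}^1\setminus\{2\text{ points}\}\cong\mathbb{G}_m$; in $\pi_1(\mathbb{G}_m)=\mathbb{Z}$ the meridian around $p_H$ is the inverse of the meridian around $p_L$, which yields the relation $\gamma_H\gamma_L=1$ in $\pi_1(X)$. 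Combined with $\gamma_H=1$ this gives $\gamma_L=1$, so $\pi_1(X)$ is normally generated by trivial elements and is therefore trivial.

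The step needing the most care, and the main conceptual obstacle, is the reduction in the first paragraph: justifying that $\pi_1(X)$ is normally generated by the meridians of the components of $H\cup L_{3,4}$, and that the loops produced on the auxiliary lines genuinely represent those meridians. Both points rest on transversality (guaranteed by the intersection numbers being $1$, i.e. reduced points) together with the standard fact on complements of normal crossings divisors in simply connected surfaces. It is also worth emphasizing why two lines, rather than one, are needed: since $H$ is ample, \emph{every} effective curve meets $H$, so $\gamma_L$ can never be killed by a curve disjoint from $H$; this forces the use of a curve meeting both components to produce the relation $\gamma_H\gamma_L=1$ and then deduce $\gamma_L=1$ from $\gamma_H=1$.
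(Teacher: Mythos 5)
Your argument is essentially correct and takes a genuinely different route from the paper. The paper first bounds $\pi_1(X)$ from above by showing it is a quotient of $\pi_1(U)\cong\mathbb{Z}$, where $U$ is the complement in $X$ of the six exceptional curves; this step imports Serre's computation of the fundamental group of the complement in $\mathbb{P}^2$ of a smooth cubic plus a transverse line. It then rules out nontrivial finite cyclic covers by an intersection-theoretic argument: an unramified degree-$d$ cyclic cover of $X$ comes from a $d$-th root of a function $f$ with $(f)=aL_{3,4}+bH+dD$, and pairing with $L_{3,4}$ and with an exceptional curve forces $d\mid a$ and $d\mid b$, so the cover extends unramified over the simply connected $\widetilde F$ and is trivial. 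You instead use that $\pi_1(X)$ is normally generated by the meridians of the boundary components (valid because $\pi_1(\widetilde F)=1$; note normal crossings is not actually needed for this fact, only that the singular locus of the boundary has real codimension four) and then kill each meridian by exhibiting a rational curve meeting the boundary in a single transverse point. This is more self-contained — it avoids Serre's result and the Grauert--Remmert algebraization step — and as a bonus the computation $L\cdot H=1$ for any of the $27$ lines gives the simple connectedness of $F=\widetilde F\setminus H$ directly. Your intersection numbers are all correct, and your remark about why a second, ``mixed'' curve is unavoidable (ampleness of $H$) is exactly the right structural observation.

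There is one unjustified assertion: that $L_{1,2}$ meets $H$ and $L_{3,4}$ in two \emph{distinct} points. Since $L_{1,2}\cdot H=L_{1,2}\cdot L_{3,4}=1$, each intersection is a single transverse point, but the two points could a priori coincide at the unique point of $H\cap L_{3,4}$; this happens precisely when the residual intersection of the line through $P_1,P_2$ with the plane cubic is the point where that line meets the line through $P_3,P_4$, and nothing in the general-position hypothesis forbids it. In that degenerate case $L_{1,2}\cap X\cong\mathbb{C}$ and the contractible loop around the common point represents $\gamma_H\gamma_L$ only after a local analysis at the node, which you have not supplied. The repair is easy: $E_3$ meets $L_{3,4}$ in a point lying over $P_3$ and $E_4$ in a point lying over $P_4$, so at most one of $E_3,E_4$ can pass through the single point $H\cap L_{3,4}$; take the other as your second auxiliary curve (each satisfies $E_i\cdot H=E_i\cdot L_{3,4}=1$). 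With this one-line addition the proof is complete.
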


We recall the following two lemmas.

\begin{lem}\label{quoziente}
	Let $X$ be a (connected) complex manifold, $Z \subset X$ a proper closed complex submanifold, $Y = X\setminus Z$ its complement, $p \in Y$ a point. Then the inclusion $i \colon Y \hookrightarrow X$ induces a surjective homomorphism $i_{*}\colon \pi_1(Y,p) \rightarrow \pi_1(X,p)$ between the corresponding fundamental groups.
\end{lem}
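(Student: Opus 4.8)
The plan is to prove surjectivity of $i_*$ by the standard transversality argument, exploiting the crucial feature that a \emph{proper complex} submanifold has \emph{real} codimension at least two. First I would reduce to a statement about loops: surjectivity of $i_*$ amounts to showing that every class in $\pi_1(X,p)$ admits a representative lying entirely in $Y$, i.e. that every loop $\gamma\colon S^1\to X$ based at $p$ (say $\gamma(s_0)=p$) is homotopic, within $X$ and rel $s_0$, to a loop whose image is contained in $Y=X\setminus Z$. Since by hypothesis $p\in Y$, the basepoint already lies off the closed set $Z$, so there is room to keep it (and a small arc around $s_0$) fixed throughout any perturbation.

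Next I would smooth and then make the loop transverse to $Z$. After a standard smooth approximation (homotopic rel $s_0$) I may assume $\gamma$ is smooth. Since $Z$ is a closed complex submanifold of complex codimension $\ge 1$, it has real codimension $\operatorname{codim}_{\mathbb{R}} Z\ge 2$. I then invoke the Thom transversality theorem to perturb $\gamma$ slightly, keeping it fixed near $s_0$ (where its image already avoids $Z$), to a smooth loop $\gamma'$ transverse to $Z$. Because $Z$ is closed and $S^1$ is compact, the perturbation can be taken $C^0$-small, so that a straight-line homotopy performed inside local charts (or inside a tubular neighbourhood of the portion of the image lying near $Z$) realizes a homotopy $\gamma\simeq\gamma'$ inside $X$, rel $s_0$.

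The key dimension count then finishes the proof: if $\gamma'$ is transverse to $Z$, the preimage $(\gamma')^{-1}(Z)$ is a submanifold of $S^1$ of dimension $\dim S^1-\operatorname{codim}_{\mathbb{R}} Z\le 1-2=-1$, hence empty. Thus $\gamma'$ is a loop based at $p$ with image in $Y$, homotopic to $\gamma$ in $X$, which shows that the class of $\gamma$ lies in the image of $i_*$ and hence that $i_*$ is surjective. The only point demanding genuine care is carrying out the transversality perturbation rel the basepoint while certifying that the perturbed loop remains homotopic to the original \emph{inside $X$}; both are handled by supporting the perturbation away from $s_0$ and taking it $C^0$-small relative to a tubular neighbourhood of $Z$. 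I expect no serious obstacle here: the argument is entirely local in real codimension $\ge 2$, so no assumption beyond $Z$ being a closed complex submanifold (in particular smooth) is needed.
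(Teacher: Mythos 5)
Your proof is correct, but it is worth noting that the paper itself does not prove this lemma at all: its ``proof'' is a one-line citation to Lemma 2, Section 3 of \cite{CZ2}. Your argument --- smooth the loop rel the basepoint, perturb it (again rel the basepoint, which is legitimate since $\gamma(s_0)=p$ lies in the open set $Y=X\setminus Z$) to be transverse to $Z$, and observe that transversality of a map from the $1$-dimensional $S^1$ to a submanifold of real codimension $\ge 2$ forces the preimage of $Z$ to be empty --- is precisely the standard general-position proof of the cited statement, so in effect you have supplied the self-contained argument that the paper outsources. The two delicate points (supporting the perturbation away from $s_0$, and taking it $C^0$-small so that the straight-line homotopy in charts stays inside $X$) are handled correctly in your write-up. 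The one small point you use implicitly and could make explicit is why every component of $Z$ has real codimension $\ge 2$: a component of $Z$ of full complex dimension would be open in $X$ and also closed (being a component of the closed set $Z$), hence equal to $X$ by connectedness, contradicting properness; thus each component has complex codimension $\ge 1$, i.e.\ real codimension $\ge 2$, which is exactly what your dimension count needs.
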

\begin{proof}
	See Lemma $2$, section $3$ of \cite{CZ2}.
\end{proof} 
\begin{lem}\label{ausiliario}
	The surface $U \coloneqq X \setminus \left(E_1 \cup \cdots \cup E_6 \right)$ has fundamental group isomorphic to $\mathbb{Z}$.
\end{lem}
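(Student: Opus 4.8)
The plan is to push the problem down to $\mathbb{P}^2$ through the blow-down $\pi$, where $U$ becomes the complement of a plane nodal quartic, and then to quote the Deligne--Fulton theorem in order to reduce the computation of $\pi_1$ to that of $H_1$.

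First I would exploit that $\pi$ induces an isomorphism $\widetilde{F} \setminus (E_1 \cup \cdots \cup E_6) \cong \mathbb{P}^2 \setminus \{P_1,\dots,P_6\}$. Under this isomorphism the affine parts of $H$ and of $L_{3,4}$ correspond to $D \setminus \{P_1,\dots,P_6\}$ and to $\ell_{3,4}\setminus\{P_3,P_4\}$, where $\ell_{3,4}$ denotes the line through $P_3$ and $P_4$ and $D=\pi(H)$ is the plane cubic through the six points (smooth, since $H$ is smooth and meets each $E_i$ in a single point). As $D$ contains all six points and $\ell_{3,4}$ contains $P_3,P_4$, restoring the six deleted points fills up exactly $D\cup\ell_{3,4}$, so that
\[
U \cong \mathbb{P}^2 \setminus (D \cup \ell_{3,4}),
\]
the complement of a smooth cubic together with a line.

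Next I would settle the abelianisation. Taking $\ell_{3,4}$ as the line at infinity identifies $U$ with $\mathbb{A}^2 \setminus D_0$, the complement of the irreducible affine cubic $D_0 = D \cap \mathbb{A}^2$; since deleting a single irreducible curve from $\mathbb{A}^2$ contributes exactly one meridian, $H_1(U) \cong \mathbb{Z}$. (Equivalently, $H_1(\mathbb{P}^2 \setminus (D\cup\ell_{3,4}))$ is generated by the two meridians $\mu_D,\mu_{\ell}$ subject to the single degree relation $3\mu_D + \mu_\ell = 0$, and $\mathbb{Z}^2/\langle(3,1)\rangle \cong \mathbb{Z}$ because $(3,1)$ is primitive.) It therefore suffices to prove that $\pi_1(U)$ is abelian, for then $\pi_1(U) \cong H_1(U) \cong \mathbb{Z}$. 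For the abelianness I would invoke the theorem of Deligne and Fulton: the complement in $\mathbb{P}^2$ of a curve whose only singularities are ordinary nodes has abelian fundamental group. The removed curve $C = D \cup \ell_{3,4}$ is a union of two smooth curves, so its singular points are exactly those of $D \cap \ell_{3,4}$, and these are nodes precisely when $\ell_{3,4}$ is transversal to $D$, i.e. meets it in three distinct points.

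The main obstacle is exactly this transversality. If $\ell_{3,4}$ were tangent to $D$ at $P_3$ or $P_4$, the corresponding intersection would be a tacnode rather than a node, Deligne--Fulton would not apply, and $\pi_1(U)$ could genuinely fail to be abelian. Tangency at $P_3$ amounts to $\ell_{3,4}$ being the tangent line to $D$ at $P_3$, equivalently to $P_4$ lying on that tangent, a codimension-one condition on the configuration; on the surface it is the statement that $H$, $L_{3,4}$ and $E_3$ pass through a common point of $E_3$, i.e. that the deleted divisor $H \cup L_{3,4} \cup E_1 \cup \cdots \cup E_6$ fails to have normal crossings. I would dispose of it by arranging, within the freedom already used to fix the blow-down $\pi$ of $\widetilde{F}$ and to enlarge $k$, that $\ell_{3,4}$ meets $D$ transversally, so that $C$ is nodal and the argument goes through.
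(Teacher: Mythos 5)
Your proposal is correct and follows the same route as the paper: blow down via $\pi$ to identify $U$ with $\mathbb{P}^2 \setminus (D \cup \ell_{3,4})$, the complement of a smooth cubic and a line, and then quote a known result on fundamental groups of complements of nodal plane curves. The paper disposes of the second step in one line by citing the remark following Theorem~2 of Serre's Bourbaki expos\'e \emph{Rev\^etements ramifi\'es du plan projectif}, which records exactly the group for this configuration, whereas you reconstruct it from the Deligne--Fulton theorem together with the standard computation $H_1(\mathbb{P}^2\setminus(D\cup\ell_{3,4}))\cong\mathbb{Z}^2/\langle(3,1)\rangle\cong\mathbb{Z}$; both are legitimate, and yours has the merit of resting on the modern, fully proved form of Zariski's abelianness statement. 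The one place where you add genuine content is the transversality of $\ell_{3,4}$ and $D$: the paper simply asserts that $U$ is the complement of a smooth cubic and a \emph{transverse} line, even though it has acknowledged earlier that a line through two of the blown-up points may be tangent to $D$ at one of them. Your observation that tangency would produce a tacnode, outside the scope of Deligne--Fulton, is correct, and your fix is viable: for a fixed sixer the tangent line $T_{P_i}D$ meets $D$ in at most one further point, so at most six of the fifteen lines $\ell_{ij}$ can be tangent to $D$, and one may relabel the points (after enlarging $k$, as the theorem permits) so that the distinguished pair $\{P_3,P_4\}$ spans a transverse line. Just be aware that this relabelling must be made compatibly with the rest of the construction (the pair $\{P_3,P_4\}$ is the common base pair of the two pencils of conics), and that in the Fermat cubic application the pairs are forced by Galois considerations, so there the transversality should be checked rather than arranged.
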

\begin{proof}
	The surface $U$ is isomorphic to the complement of a smooth cubic and a transverse line in $\mathbb{P}^2$, so that the result is a consequence of the remark following Theorem $2$ of \cite{SeAb}.
\end{proof}

\begin{lem}\label{mail}
	The surface $X$ admits no finite cyclic unramified connected cover of degree $>1$.
\end{lem}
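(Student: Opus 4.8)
The plan is to turn the statement about cyclic covers into a statement about torsion, and then reduce everything to one short computation in the Picard lattice of $\widetilde F$. Over $\mathbb{C}$ a connected unramified cyclic cover of $X$ of degree $n>1$ is the same datum as a surjection $\pi_1(X)\twoheadrightarrow \mathbb{Z}/n$, hence as a surjection of the abelianization $H_1(X,\mathbb{Z})\twoheadrightarrow \mathbb{Z}/n$; equivalently, by the Kummer sequence together with Riemann existence, as a nonzero element of $H^1(X,\mu_n)$. So it suffices to prove that $X$ has only constant invertible functions and that $\operatorname{Pic}(X)$ is torsion-free (equivalently, that $H_1(X,\mathbb{Z})=0$). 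This is the reduction I would make first.

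Next I would write down the localization sequence for the complement of a divisor in the smooth projective surface $\widetilde F$. The surface $\widetilde F$ is a smooth rational cubic surface (hence simply connected), and $\widetilde F\setminus X=H\cup L_{3,4}$ is reduced with exactly the two irreducible components $H$ (a smooth plane cubic) and $L_{3,4}$ (a line). Thus
\begin{equation*}
0 \longrightarrow \mathcal{O}^*(X)/\mathbb{C}^* \longrightarrow \mathbb{Z}\,H\oplus \mathbb{Z}\,L_{3,4} \xrightarrow{\ c\ } \operatorname{Pic}(\widetilde F) \longrightarrow \operatorname{Pic}(X)\longrightarrow 0,
\end{equation*}
where $c$ sends the two generators to the classes $[H]$ and $[L_{3,4}]$. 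Writing $\widetilde F$ as the blow-up of $\mathbb{P}^2$ at $P_1,\dots,P_6$, we have $\operatorname{Pic}(\widetilde F)=\mathbb{Z}\ell\oplus\bigoplus_{i=1}^6\mathbb{Z}E_i$ with $\ell^2=1$, $E_i\cdot E_j=-\delta_{ij}$, $\ell\cdot E_i=0$, and the two classes are the anticanonical class $[H]=-K_{\widetilde F}=3\ell-\sum_{i=1}^6 E_i$ and $[L_{3,4}]=\ell-E_3-E_4$.

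The heart of the argument is then the claim that $[H]$ and $[L_{3,4}]$ span a \emph{saturated} rank-$2$ sublattice of $\operatorname{Pic}(\widetilde F)$. I would verify this by exhibiting a maximal minor of the coefficient matrix equal to $\pm1$: in coordinates $(\ell,E_1,\dots,E_6)$ the matrix is $\left(\begin{smallmatrix} 3 & -1 & -1 & -1 & -1 & -1 & -1\\ 1 & 0 & 0 & -1 & -1 & 0 & 0\end{smallmatrix}\right)$, and the minor on the columns $\ell,E_1$ is $\det\left(\begin{smallmatrix} 3 & -1\\ 1 & 0\end{smallmatrix}\right)=1$. A unit maximal minor shows simultaneously that $c$ is injective (so $\mathcal{O}^*(X)=\mathbb{C}^*$, whence $\mathcal{O}^*(X)/(\mathcal{O}^*(X))^n=0$ for every $n$) and that $\operatorname{coker}(c)=\operatorname{Pic}(X)\cong\mathbb{Z}^5$ is torsion-free. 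Feeding both facts into the Kummer sequence gives $H^1(X,\mu_n)=0$ for all $n>1$, so the only $\mu_n$-torsor is the trivial (disconnected) one and no connected cyclic unramified cover of degree $>1$ exists.

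The routine but essential points to check are the two identifications $[H]=-K_{\widetilde F}$ and $[L_{3,4}]=\ell-E_3-E_4$ with their intersection numbers, and the (standard, over $\mathbb{C}$) passage between finite topological cyclic covers and classes in $H^1(X,\mu_n)$. I expect the only genuine obstacle to be bookkeeping: confirming that $H\cup L_{3,4}$ is reduced with two smooth components meeting transversally (one computes $H\cdot L_{3,4}=1$), so that the sequence above applies in this clean form. One may also bypass the sheaf-theoretic language and run the same computation purely topologically: since $\widetilde F$ is simply connected, $H_1(X,\mathbb{Z})$ is generated by the meridians $\gamma_H,\gamma_{L_{3,4}}$ subject to $(\beta\cdot H)\gamma_H+(\beta\cdot L_{3,4})\gamma_{L_{3,4}}=0$ for $\beta$ ranging over $H_2(\widetilde F,\mathbb{Z})$; taking $\beta=E_1$ forces $\gamma_H=0$ and then $\beta=\ell$ forces $\gamma_{L_{3,4}}=0$, so $H_1(X,\mathbb{Z})=0$ and no abelian—in particular no cyclic—cover survives. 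This dovetails with Lemmas~\ref{quoziente} and~\ref{ausiliario}, which already force $\pi_1(X)$ to be cyclic, so that killing the cyclic covers is exactly what is needed.
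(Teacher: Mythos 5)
Your proposal is correct and is essentially the paper's own argument in cohomological dress: the paper extracts a $d$-th root of a rational function $f$, writes $(f)=aL_{3,4}+bH+dD$, and pairs against $L_{3,4}$ and the $E_i$ to conclude $d\mid a$ and $d\mid b$, which is precisely your statement that $[H]$ and $[L_{3,4}]$ span a saturated sublattice of $\operatorname{Pic}(\widetilde F)$ (your unit minor on the columns $\ell,E_1$ plays the same role as the paper's intersection numbers). The only difference is in the conclusion, where the paper extends the now-everywhere-unramified cover to the simply connected $\widetilde F$, while you read off $H^1(X,\mu_n)=0$ directly from the localization and Kummer sequences.
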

\begin{proof}
	Let $Y \to X$ be a cyclic unramified cover of degree $d>1$. By a Theorem of Grauert and Remmert (see \cite{SeTGT}, Ch.$6$), $Y$ has a structure of algebraic variety such that the covering map is algebraic. Then the function field of $Y$ is obtained by taking the $d$-th root of a rational function $f \in \mathbb{C}(\widetilde{X})^{*}=\mathbb{C}(\widetilde{F})^{*}$ on $\widetilde{F}$. The property that $Y \to X$ is unramified means that all zeros and poles of $f$ in $X$ have multiplicity divisible by $d$. This means that for each irreducible curve $C \subset \widetilde F$ outside $L_{3,4} \cup H$ the multiplicity of $C$ in the divisor $(f)$ is divisible by $d$. Hence we have
	\begin{equation*}
		(f)=aL_{3,4}+bH+dD
	\end{equation*}
	for a divisor $D$. For an exceptional divisor $E_i$ with $i \neq 3,4$ we have the following intersection products
	\begin{equation*}
		L_{3,4}^2= -1, \, L_{3,4}.H=1, \, L_{3,4}.E_i=0, \,  E_i.H=1
	\end{equation*}
	Multiplying $(f)$ by $L_{3,4}$ and $E_i$ we obtain 
	\begin{align*}
		&0=(f).L_{3,4}=-a+b+dD.L_{3,4}\\
		&0=(f).E_i=b+dD.E_i\\
	\end{align*}
	so that $d$ must divide both $a$ and $b$. Then there would not be ramification over $L_{3,4}$ and $H$ and $Y \to X$ would define an unramified cover of all $\widetilde F$, which is simply connected, so that the cover would be trivial.
\end{proof}
\begin{proof}[Proof of Theorem \ref{top}]
	Let $U \subset X$ be the surface of Lemma \ref{ausiliario}. By Lemma \ref{quoziente} the fundamental group of $X$ is a quotient of the fundamental group of $U$, which is isomorphic to $\mathbb{Z}$ after Lemma \ref{ausiliario}, so that $\pi_1(X)$ is a quotient of $\mathbb{Z}$. To prove that such a quotient is trivial, it is sufficient to show that it admits no non-trivial finite cyclic quotients. Geometrically, this means that $X$ admits no non-trivial (finite) cyclic cover, which is the content of Lemma \ref{mail}.
\end{proof}
\begin{cor}
	The surface $F$ is simply connected.
\end{cor}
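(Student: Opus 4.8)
The plan is to deduce the simple connectedness of $F$ from that of $X$, which was just established in Theorem \ref{top}. Since $X = F \setminus L_{3,4}$ is obtained from $F$ by removing the closed submanifold $L_{3,4}$ (the strict transform of the line through $P_3$ and $P_4$), I would invoke Lemma \ref{quoziente} with $X$ playing the role of the ambient manifold $Y$'s complement. Concretely, apply Lemma \ref{quoziente} to the inclusion $X \hookrightarrow F$, taking $Z = L_{3,4}$ as the proper closed complex submanifold of $F$ whose complement is $X$. The lemma then yields a surjection $\pi_1(X) \twoheadrightarrow \pi_1(F)$.

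The conclusion is then immediate: by Theorem \ref{top} we have $\pi_1(X) = 1$, and the image of the trivial group under any homomorphism is trivial, so surjectivity forces $\pi_1(F) = 1$. Hence $F$ is simply connected.

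The only genuine point requiring care is the applicability of Lemma \ref{quoziente}, whose hypothesis is that $Z$ be a \emph{smooth} (submanifold) closed subvariety of a connected complex manifold. Here I must check that $L_{3,4}$ is a smooth curve inside $F$ and that $F$ itself is a connected complex manifold. Connectedness of $F$ is clear since $F$ is an irreducible variety. The smoothness of $L_{3,4}$ is unproblematic: it is (the affine part of) one of the $27$ lines on the smooth cubic surface $\widetilde F$, hence a smooth rational curve, and removing its points at infinity leaves a smooth locally closed curve sitting inside the smooth surface $F = \widetilde F \setminus H$. Since $\widetilde F$ is smooth and $H$ is a smooth hyperplane section, $F$ is a smooth (open) surface, so $L_{3,4} \cap F$ is indeed a smooth closed complex submanifold of $F$ of positive codimension, and Lemma \ref{quoziente} applies verbatim. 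This is the step I expect to be the main (and essentially the only) obstacle, but it is a routine verification rather than a serious difficulty.
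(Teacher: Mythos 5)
Your proof is correct and is exactly the argument the paper intends (the paper leaves the corollary unproved, but the deduction via Lemma \ref{quoziente} applied to the inclusion $X = F\setminus L_{3,4}\hookrightarrow F$, combined with Theorem \ref{top}, is the evident route, mirroring how Lemma \ref{quoziente} was already used in the proof of Theorem \ref{top} itself). Your verification that $L_{3,4}\cap F$ is a smooth closed submanifold of the smooth connected surface $F$ is the right hypothesis check and is indeed routine.
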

We then have proved that $F$ satisfies the hypotheses of Problem \ref{scihp}, so that we can proceed to the proof of the IHP.

\subsection{The IHP for cubic surfaces}

Our proof of Theorem \ref{fregno} follows the method developed by Corvaja and Zannier in \cite{CZ1} to prove the Hilbert property for the Fermat quartic surface while now the role of the elliptic fibrations is played by fibrations in conics with two points at infinity. One interesting difference is that here the topology of complements of curves in $F$ is crucial for the success of the strategy. In fact $F$ is simply connected, so that any cover of degree $>1$ must ramify outside $H$. In other terms, for $(F,H)$ the IHP is equivalent to the WIHP. Moreover Theorem \ref{top} will be used in an essential step of the following proof.

We argue by contradiction. Suppose we are given finitely many finite maps $\pi_i \colon Y_i \to F$ of degree $>1$, where $Y_i$ is an absolutely irreducible variety, such that $F(\O_S) \setminus \bigcup_i(\pi_i(Y_i(k)))$ is not Zariski-dense (and so it is contained in a curve). Then, as we observed, the maps $\pi_i$ will be ramified over a non-empty divisor of $F$. We can suppose that the projective continuation $\widetilde \pi_i \colon \widetilde Y_i \to \widetilde F$ are finite\footnote{In fact the usual reduction of Remark \ref{riduzione} produces a projective variety because $\widetilde F$ is projective.}, too.
In this case, when we speak about $S$-integral points on $Y_i$, we mean that $\widetilde \pi_i^{-1}(H)$ is the divisor at infinity of $\widetilde Y_i$. Enlarging the primes $S\subset S'$ we may suppose that if the pre-image through $\pi_i$ of a point in $F(\O_S)$ lies in $Y_i(k)$, then it actually belongs to $Y_i(\O_{S'})$.

Let us consider one such cover $\pi \colon Y \to F$. The generic fiber of $\lambda \circ \pi \colon Y \to \mathbb{P}^1$ can be either reducible or irreducible, i.e. the fiber $Y(l)\coloneqq \pi^{-1}(\Cu_l)$ is reducible or irreducible for general $l \in \mathbb{C}$. We divide the covers in three types:
\begin{enumerate}
	\item $Y(l)$ is generically reducible.
	\item $Y(l)$ is generically irreducible and there exists an irreducible curve in the ramification divisor of $\pi$ which generically intersects the curves $\Cu_l$.
	\item $Y(l)$ is generically irreducible and there exists no such curve as above. More explicitly, the components of the ramification divisor can be irreducible curves of the family $\Cu_l$, strict transforms of the lines through the four base points of the linear system or the exceptional divisors $E_5$ and $E_6$. To prove this, let us first observe that $E_5$ and $E_6$ are the only exceptional divisors which do not generically intersects the curves $\Cu_l$. Let $\mathcal{D}$ be an irreducible curve in $F$, distinct from the exceptional divisors, which does not generically intersect the curves $\Cu_l$. Then $\pi(\mathcal{D})$ is a curve in $\mathbb{P}^2$ which does not generically intersect the conics $\pi(\Cu_l)$. However these are the conics of a linear system, so they generically intersect every curve which is not part of their linear system. Then $\pi(\mathcal{D})$ can be an irreducible conic $\pi(\Cu_l)$ or a reducible one, i.e. a line through the four base points. Passing to strict transforms we have the assertion. 
\end{enumerate}

First, we treat the case of generically reducible covers.
\begin{lem}
	If the generic fiber of $\lambda \circ \pi$ is reducible, there exists a cover defined over $k$ of $k$-irreducible curves $\phi \colon \Cu \to \mathbb{P}^1$ such that the pullback of $Y$ over the base change $\Cu \times_{\mathbb{P}^1}\widetilde F$ is reducible over $\bar k$.
\end{lem}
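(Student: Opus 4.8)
The plan is to recover the curve $\Cu$ directly from the function-field extension attached to the composite $g \coloneqq \lambda \circ \pi \colon Y \to \mathbb{P}^1$. Since $Y$ is absolutely irreducible and $g$ is dominant, its generic fibre $Y_\eta$ over the field $k(\mathbb{P}^1)$ is irreducible; the hypothesis that $Y(l) = \pi^{-1}(\Cu_l)$ is reducible for general $l$ says precisely that the \emph{geometric} generic fibre $Y_\eta \times_{k(\mathbb{P}^1)} \overline{k(\mathbb{P}^1)}$ is reducible. So the whole content is to detect this splitting and to show that it is already accounted for by a cover of $\mathbb{P}^1$.

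The key tool is the classical dictionary: in characteristic $0$ a variety over a field $\kappa$ is geometrically integral exactly when $\kappa$ is algebraically closed in its function field. I would therefore set $\Lambda$ equal to the algebraic closure of $k(\mathbb{P}^1)$ inside $k(Y)$. Comparing transcendence degrees ($\operatorname{tr.deg}_k k(Y)=2$, $\operatorname{tr.deg}_k k(\mathbb{P}^1)=1$) shows that $\Lambda/k(\mathbb{P}^1)$ is finite, and it is separable since we are in characteristic $0$; the reducibility of the geometric generic fibre then forces $[\Lambda : k(\mathbb{P}^1)] \ge 2$. Let $\Cu$ be the smooth projective $k$-curve with $k(\Cu)=\Lambda$: the inclusion $k(\mathbb{P}^1)\hookrightarrow\Lambda$ induces a finite morphism $\phi\colon\Cu\to\mathbb{P}^1$ of degree $[\Lambda:k(\mathbb{P}^1)]\ge 2$, i.e.\ a genuine cover, and $\Cu$ is $k$-irreducible because $\Lambda$ is a field.

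It then remains to identify the pullback and verify that it splits. By associativity of fibre products,
\[
 Y \times_{\widetilde F} \bigl(\Cu \times_{\mathbb{P}^1} \widetilde F\bigr) \;\cong\; Y \times_{\mathbb{P}^1} \Cu,
\]
the product on the right being formed via $g$ and $\phi$. Its ring of functions over the generic point of $\Cu$ is governed by $k(Y)\otimes_{k(\mathbb{P}^1)}\Lambda$. As $\Lambda/k(\mathbb{P}^1)$ is separable this algebra is étale over $k(Y)$, hence a product of finite separable field extensions of $k(Y)$ the sum of whose degrees is $[\Lambda:k(\mathbb{P}^1)]\ge 2$; since $\Lambda\subseteq k(Y)$, the multiplication map $a\otimes b\mapsto ab$ exhibits one factor equal to $k(Y)$ of degree $1$, so there must be at least one further factor. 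Thus $k(Y)\otimes_{k(\mathbb{P}^1)}\Lambda$ decomposes into $\ge 2$ factors, and $Y\times_{\mathbb{P}^1}\Cu$ is reducible already over $k$, a fortiori over $\bar k$, as required.

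I expect the only real subtlety to lie in the first two steps: ensuring that $\Lambda$ is genuinely finite over $k(\mathbb{P}^1)$ and that the entire picture descends to $k$ (so that $\phi\colon\Cu\to\mathbb{P}^1$ is a $k$-cover of $k$-irreducible curves), together with a clean invocation of the geometric-irreducibility/algebraic-closure dictionary with the correct separability hypotheses. The fibre-product identity and the tensor-product splitting are then formal. Concretely, $\phi\colon\Cu\to\mathbb{P}^1$ is nothing but the finite part of the Stein factorization of a proper model of $g$; this is exactly the data recording how the geometric components of the fibres $Y(l)$ are permuted, which is why they become individually defined after base change to $\Cu$.
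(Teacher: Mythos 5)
Your argument is correct, and it reaches the same destination as the paper by a noticeably more structural route. The paper works with an explicit affine model: it writes the surface as $(P(u,v,t)=0)\to t$, observes that reducibility of almost all fibres $P(u,v,t_0)$ forces a factorization $P=A_t\cdot B_t$ with coefficients algebraic over $k(t)$, adjoins those finitely many coefficients to get a finite extension $k(t,s)$, and takes $\Cu$ to be the curve with that function field; the pullback is then visibly the surface $(P=0)$ over $k(t,s)$, where it factors. You instead take $\Lambda$ to be the relative algebraic closure of $k(\mathbb{P}^1)$ in $k(Y)$ (i.e.\ the finite part of the Stein factorization of $\lambda\circ\pi$), invoke the dictionary ``geometrically irreducible generic fibre $\Leftrightarrow$ base field algebraically closed in the function field'' to get $[\Lambda:k(\mathbb{P}^1)]\ge 2$, and then split $k(Y)\otimes_{k(\mathbb{P}^1)}\Lambda$ as an \'etale algebra with the multiplication map providing one factor. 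Both proofs rest on the same nontrivial input, namely that reducibility of the fibres over a dense set of closed points is detected by the geometric generic fibre (in the paper this is the Bertini--Noether--type statement that generic reducibility of $P(u,v,t_0)$ yields a factorization over $\overline{k(t)}$; in yours it is the constructibility of geometric irreducibility in families). What your version buys is a canonical, coordinate-free choice of $\Cu$, the degree bound $\deg\phi\ge 2$ built into the construction (the paper derives it only afterwards from geometric irreducibility of $Y$), and a cleaner fit with the way the lemma is subsequently used, since the residue fields of $\phi^{-1}(l)$ really do control the fields of definition of the components of $Y(l)$. Two small points to tighten: comparing transcendence degrees only shows $\Lambda/k(\mathbb{P}^1)$ is algebraic, and you should add the standard fact that the relative algebraic closure in a finitely generated field extension is a \emph{finite} extension; and the equivalence between ``$Y(l)$ reducible for general $l$'' and ``geometric generic fibre reducible'' deserves an explicit reference, since it is the one genuinely non-formal step.
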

\begin{proof}
	Irreducibility is a birational invariant, hence we reduce to the case of a projection of affine varieties $\pi \colon Y \to X$ where $X \coloneqq (f(x,y,\lambda)=0) \subseteq \mathbb{A}^3$ and $Y \coloneqq (f(x,y,\lambda)=0)\cap(g(x,y,z,\lambda)=0) \subseteq \mathbb{A}^4$, $\pi$ maps $(x,y,z,\lambda)$ to $(x,y,\lambda)$ and $\lambda \colon X \to \mathbb{P}^1$ maps $(x,y,\lambda)$ to $\lambda$. The fact that $\lambda \circ \pi$ has reducible generic fiber is equivalent to the fact that for all but finitely many $\lambda_0 \in \bar k$ the fiber $(f(x,y,\lambda_0)=0)\cap(g(x,y,z,\lambda_0)=0)$ is reducible. Up to a birational isomorphism we can eliminate one variable and reduce to a surface defined by $P(u,v,t)=0$, projecting $(u,v,t)$ to $t$, in such a way that for all but finitely many $t_0 \in \bar{k}$ the polynomial $P(u,v,t_0)$ is reducible. This implies that there exists a non-trivial factorization
	\begin{equation*}
	P(u,v,t)=A_t(u,v) \cdot B_t(u,v)
	\end{equation*} 
	where $A_t$ and $B_t$ are polynomials in $u$ and $v$ with coefficients given by algebraic functions of $t$. Since there are finitely many such coefficients, each providing a finite algebraic extension of the scalars $k(t)$, there is an algebraic element $s$ over $k(t)$, such that the above factorization is valid in $k(t,s)$. Geometrically, this corresponds to a $k$-cover $\Cu \to \mathbb{P}^1$, where $\Cu$ is a $k$-irreducible curve with function field $k(t,s)$.
	\begin{equation*}
	\begin{tikzcd}
	Y' \arrow{r} \arrow{d}
	&Y \arrow{d}{\pi}\\
	\Cu \times_{\mathbb{P}^1} X \arrow{r} \arrow{d} &X \arrow{d}{\lambda}\\
	\Cu \arrow{r}{\phi} &\mathbb{P}^1
	\end{tikzcd}
	\end{equation*}
	From the above discussion, the pullback $Y'$ of $Y$ with respect to this base change is birationally equivalent to $(P(u,v,t)=0)$, hence reducible over $\bar{k}$.
\end{proof}
Using this Lemma and the fact that $Y$ is geometrically irreducible, we have $\deg(\phi \colon \Cu \to \mathbb{P}^1)>1$. Then the components of $Y(l)$ may be defined over $k$ only for $l \in \phi(\Cu(k))$, which is a thin set in $\mathbb{P}^1(k)$. That is, for $l$ outside this thin set, the fiber $Y(l)$ is irreducible over $k$ but reducible over $\bar{k}$, and this implies that $Y(l)$ has only finitely many rational points.\footnote{See the first remark in Chapter $3$ of \cite{SeTGT}.}

We denote by $T$ the union of these thin sets, which in the following we will enlarge with finitely many other thin sets, if necessary.
We then have obtained that for $l \notin T$ all but finitely many points of $\Cu_l(k)$, and in particular of $\Cu_l(\O_S)$, are not lifted to the generically reducible covers. Then choosing $l \notin T$ with $\Cu_l(\O_S)$ infinite (this always exists since $L(\O_S)$ is not thin), we have that infinitely many integral points do not lift.

Let us examine the covers with generically irreducible fibers. The set of $l \in \mathbb{C}$ for which $Y(l)$ is reducible is a closed set, then it is finite and we can put it in $T$.

We begin with covers of the second type. Let $B$ be one irreducible curve in the ramification divisor of $\pi$, which is not contained in $H$ and generically intersects the curves $\Cu_l$. We claim that $Y(l)$ has finitely many $S'$-integral points, except for finitely many $l$ which we put in $T$. In order to do this, we have to prove that for all but finitely many $l$, the restriction of $\pi$ to $Y(l)$ defines a cover of $\Cu_l$ ramified over $B \cap \Cu_l$. We use the following lemma.

\begin{lem}\label{restriction}
	Let $\pi \colon Y \to X$ be a cover of surfaces which is a finite map. Let $B$ be one irreducible component of the ramification divisor. Let $\Cu$ be a curve in $X$ intersecting $B$ transversally at some point $p$ which is smooth for both $\Cu$ and $B$. Suppose that $\pi^{-1}(\Cu)$ is absolutely irreducible. Then the restriction of $\pi$ to $\pi^{-1}(\Cu)$ defines a cover $\pi \colon \pi^{-1}(\Cu) \to \Cu$ ramified above $p$.
\end{lem}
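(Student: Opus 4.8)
The statement is local on $\Cu$ around $p$, so the plan is to understand the finite map $\pi$ near the fibre over $p$ and to show that it stays ramified after restriction. The first structural fact I would exploit is that $\pi$ is \emph{finite and flat}: since $Y$ is a normal surface it is Cohen--Macaulay, and $X$ is smooth, hence regular, so miracle flatness (a finite morphism from a Cohen--Macaulay scheme to a regular scheme with $0$-dimensional fibres is flat) applies. Flatness lets me form the discriminant divisor $\Delta_\pi \subseteq X$ of $\pi$, an effective divisor whose support is exactly the branch locus of $\pi$, i.e. the locus over which $\pi$ fails to be \'etale. Since $\pi$ ramifies over the generic point of $B$ (as $B$ is a component of the branch divisor in $X$), we get $B \subseteq \operatorname{Supp}\Delta_\pi$, and in particular $p \in \operatorname{Supp}\Delta_\pi$.

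The second step is base change along the inclusion $\iota \colon \Cu \hookrightarrow X$. Writing $Y_\Cu \coloneqq Y \times_X \Cu$ for the scheme-theoretic preimage, flatness of $\pi$ guarantees that $\pi_\Cu \colon Y_\Cu \to \Cu$ is again finite flat and that the formation of the discriminant commutes with base change, so that $\Delta_{\pi_\Cu} = \iota^{*}\Delta_\pi$ as divisors on $\Cu$. Because $\Cu$ meets $B$ transversally at the smooth point $p$, one has $\Cu \not\subseteq \operatorname{Supp}\Delta_\pi$ and $p$ is isolated in $\Cu \cap B$, so $\iota^{*}\Delta_\pi$ is a genuine (finite) divisor in which $p$ occurs with strictly positive coefficient. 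Hence $\pi_\Cu$ is not \'etale over $p$, i.e. it is ramified above $p$.

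It then remains to identify $Y_\Cu$ with the variety $\pi^{-1}(\Cu)$ appearing in the statement, that is, to check that $Y_\Cu$ is reduced. Here I would invoke Serre's criterion $R_0+S_1$: since $\pi_\Cu$ is finite flat, $Y_\Cu$ is a Cohen--Macaulay curve and so satisfies $S_1$; and since $\Cu$ is not contained in the branch locus, $\pi$ is generically \'etale over $\Cu$, so in characteristic $0$ the generic points of $Y_\Cu$ correspond to separable field extensions and are reduced, giving $R_0$. Therefore $Y_\Cu$ is reduced, so $Y_\Cu = \pi^{-1}(\Cu)$, and the ramification detected above is genuine ramification of $\pi \colon \pi^{-1}(\Cu) \to \Cu$ at $p$, as required.

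The only real subtlety, and the reason I prefer this route to a bare local-coordinate computation, is that $Y$ may be singular at a point of $\pi^{-1}(p)$ and the scheme-theoretic preimage $\pi^{-1}(\Cu)$ could a priori be non-reduced; both obstacles are dissolved by the flatness of $\pi$ together with the base-change compatibility of the discriminant. As a sanity check in the smooth case, since we work in characteristic $0$ the ramification is tame, and at a smooth point $q$ of $Y$ over $p$ lying on the ramification divisor one can put $\pi$ in the normal form $(t,y)\mapsto(t^{e},y)$ with $B=\{x=0\}$ and $\Cu=\{y=0\}$, so that the restriction is $t\mapsto t^{e}$ and visibly ramified of index $e$; the flat/discriminant argument is exactly what upgrades this picture to a uniform proof valid also at singular points of $Y$.
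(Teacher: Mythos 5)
Your route through flatness and the discriminant is genuinely different from the paper's, and most of it is sound: miracle flatness applies (in the paper's setting $X$ is smooth and $Y$ may be taken normal by Remark \ref{riduzione}), the discriminant is compatible with base change, and your reducedness argument for $Y_\Cu=\pi^{-1}(\Cu)$ via $R_0+S_1$ is correct. But there is a real gap at the last step: what you actually prove is that $\pi_\Cu$ is \emph{not \'etale} over $p$, i.e.\ that the length-$d$ fibre over $p$ is not $d$ distinct reduced points. This is strictly weaker than the conclusion the lemma must deliver. ``Ramified above $p$'' is used downstream in Proposition \ref{trick}, where one applies Hurwitz to the \emph{smooth model} of $\pi^{-1}(\Cu)$ and needs a point with ramification index $e_x>1$ over an affine point. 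For a singular curve, non-\'etaleness of the fibre does not imply ramification of the normalization: the degree-two projection of the nodal curve $y^2=x^2(x+1)$ to the $x$-line has positive discriminant order at $x=0$ and a non-reduced fibre there, yet its normalization $t\mapsto t^2-1$ is unramified over $x=0$. If $\pi^{-1}(\Cu)$ had a multibranch singularity over $p$ with unramified branches, your argument would go through verbatim while the intended conclusion (and the finiteness of integral points in Proposition \ref{trick}) could fail; e.g.\ a singular model of $\G_m\to\G_m$, $t\mapsto t^2$, obtained by glueing $t=\pm1$, is non-\'etale over $1$ but still has infinitely many integral points.

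What closes the gap is precisely the local normal form that you relegate to a ``sanity check''. Since $Y$ is normal, $X$ is smooth, and $B$ is smooth at $p$, a punctured neighbourhood $U\setminus B\cong D\times D^*$ has fundamental group $\Z$, so $\pi^{-1}(U)$ is a disjoint union of charts $(u,v)\mapsto(u,v^{e_i})$; in particular $Y$ is smooth over a neighbourhood of $p$, the transversality of $\Cu$ and $B$ puts $\Cu=\{x=0\}$, and $\pi^{-1}(\Cu)$ is \emph{smooth} (hence unibranch) at every point of $\pi^{-1}(p)$, restricting to $v\mapsto v^{e_i}$ with some $e_i>1$. Only with this smoothness in hand does ``non-\'etale over $p$'' become ``ramified at a point of the smooth model over $p$''. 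This is exactly the content of the paper's proof, and it is the step where the smoothness and transversality hypotheses are actually used (in your write-up they enter only to argue $\Cu\not\subseteq\operatorname{Supp}\Delta_\pi$ --- which, strictly speaking, they do not quite give either: transversality at $p$ excludes $\Cu\subseteq B$ but not $\Cu$ being contained in some \emph{other} component of the branch locus; this is harmless in the application but worth stating as a hypothesis). So either add the local structure argument to upgrade your conclusion, or state explicitly that by ``ramified'' you mean ``not \'etale'' and then verify separately that $\pi^{-1}(\Cu)$ is unibranch over $p$.
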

\begin{proof}
	There is a neighborhood $U$ of $p$ and local analytic coordinates $(x,y)$ for which $p=(0,0)$, $\Cu$ is defined by $x=0$, $B$ is defined by $y=0$ and $U \setminus (U\cap B)$ is biholomorphic to $D\times D^*$ where $D$ is the open unit disc in $\mathbb{C}$ and $D^*=D \setminus \left\{0\right\}$. For a ramification point $q \in \pi^{-1}(p)$ there will be local analytic coordinates $(u,v)$ such that $\pi^{-1}(\Cu)$ is defined by $u=0$ and the map $\pi$ is given locally by $x=u$ and $y=v^e$ where $e>1$ is the ramification index at $q$. Then the pre-image of $\Cu$ is a smooth curve at $q$ and $\pi \colon \pi^{-1}(\Cu) \to \Cu$ ramifies at $q$.
\end{proof}

Clearly the generic $\Cu_l$ intersects transversally the ramification divisor $B_i$ (of the second type cover $\pi_i$) in smooth points not contained in the divisor at infinity $H$. We put the finitely many exceptions in $T$.
Then for $l \notin T$ we can apply Lemma \ref{restriction} to obtain that the restriction $\pi_i \colon \pi_i^{-1}(\Cu_l) \to \Cu_l$ is a cover of curves ramified above an affine point of $\Cu_l$. By Proposition \ref{trick} we have that $\pi_i^{-1}(\Cu_l)$ has only finitely many $S'$-integral points, for all $i$'s. Recall that we enlarged $S$ so that the $k$-rational preimages of $S$-integral points are $S'$-integral. Then only finitely many points of $\Cu_l(\O_S)$ can lift to the covers of second type.
Then, if all the covers were of second type, we would have a contradiction considering the curves $\Cu_l$ with $l \notin T$ and infinitely many $S$-integral points.

Let us then study the covers of third type.
First we observe that the points of total ramification, i.e. with exactly one pre-image through $\widetilde \pi$, form a closed subset of the ramification locus. Then $\widetilde \pi$ totally ramifies on all of $H$ or on finitely many points of $H$. In this last case, if we put in $T$ the $l$'s for which $\Cu_l$ passes through the total ramification points, we have that for $l \notin T$ the absolutely irreducible curve $\pi^{-1}(\Cu_l)$ has more than two points at infinity, and so it has only finitely many $S'$-integral points.

We can then assume that third type covers are totally ramified over $H$. 
For all $l$, except the finitely many ones for which $\Cu_l$ is tangent to $H$ or it is a component of the ramification locus, $Y(l)$ is an absolutely irreducible curve with two points at infinity and is unramified in the affine part over $\Cu_l$. Then $Y(l)$ is isomorphic to a multiplicative torus $\mathbb{G}_m$ which we denote by $\mathcal{T}_l$ and $\pi_{\vert \mathcal{T}_l}$ is the translate of an isogeny. By the assumption on the surjectivity of the covers on integral points, the $S$-integral points which do not lift are contained in a curve, so that, for $l \in k\setminus T$, all but finitely many $S$-integral points of $\Cu_l$ lift to rational points of the $\mathcal{T}_l$'s (of the various covers). Moreover, if $\Cu_l(\O_S)$ is infinite then it is isomorphic to a finitely generated abelian group of positive rank and $\pi(\mathcal{T}_l(O_S))$ is the translate of a subgroup of $\Cu_l(\O_S)$.

Then we have that when $\Cu_l(\O_S)$ is infinite, \emph{all} the points in $\Cu_l(\O_S)$ are lifted, as it follows from the following (Lemma $3.2$ of \cite{CZ1}).

\begin{lem}[\cite{CZ1}]
	Let $G$ be a finitely generated abelian group of positive rank and, for $u$ in a finite set $U$, let $H_u$ be subgroups of $G$ and let $h_u\in G$. Suppose that  $G \setminus\bigcup_{u\in U}(h_u+H_u)$ is finite. Then this complement is actually empty.
\end{lem}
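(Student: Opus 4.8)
The plan is to argue by a point-counting (density) argument, exploiting that cosets of infinite-index subgroups are too sparse to help cover a cofinite set once $G$ has positive rank. Write $G\cong\mathbb{Z}^{r}\oplus T$ with $r\ge 1$ and $T$ finite, and for $M\ge 1$ let $B_M$ be the ``box'' of elements whose $\mathbb{Z}^{r}$-coordinates are bounded by $M$ in absolute value, so that the size of $B_M$ grows like $M^{r}$. The key sparseness estimate I would record first is this: if $K\le G$ has infinite index, then its projection to $\mathbb{Z}^{r}$ has rank at most $r-1$, and hence any coset $g+K$ meets $B_M$ in only $O(M^{r-1})$ points. Consequently any finite union of cosets of infinite-index subgroups meets $B_M$ in $O(M^{r-1})$ points, which is negligible compared with the size $\asymp M^{r}$ of $B_M$.

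Next I would separate the given cosets according to the index of $H_u$. Put $U_1=\{u\in U:[G:H_u]<\infty\}$ and $U_2=U\setminus U_1$, and set $N=\bigcap_{u\in U_1}H_u$. As a finite intersection of finite-index subgroups, $N$ has finite index in $G$; since $G$ has positive rank, $N$ is infinite and of full rank $r$, so every coset of $N$ again meets $B_M$ in $\asymp M^{r}$ points. Because each $H_u$ with $u\in U_1$ contains $N$, the union $W:=\bigcup_{u\in U_1}(h_u+H_u)$ is a union of entire cosets of $N$.

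The heart of the argument is to show $W=G$. Suppose not; then $W$ omits an entire coset $c+N$. Since $c+N$ is disjoint from $W$ and the total complement $E:=G\setminus\bigcup_{u\in U}(h_u+H_u)$ is finite by hypothesis, we get $c+N\subseteq\bigcup_{u\in U_2}(h_u+H_u)\cup E$. Intersecting with $B_M$ and invoking the sparseness estimate yields
\begin{equation*}
M^{r}\asymp\bigl|(c+N)\cap B_M\bigr|\le\Bigl|\bigcup_{u\in U_2}(h_u+H_u)\cap B_M\Bigr|+|E|=O(M^{r-1})+O(1),
\end{equation*}
which is absurd for large $M$. Hence $W=G$, and a fortiori $\bigcup_{u\in U}(h_u+H_u)=G$, so the complement is empty.

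The only genuinely delicate point is the sparseness estimate for infinite-index cosets, together with the attendant observation that infinite index in $G$ is equivalent to the projection to $\mathbb{Z}^{r}$ dropping rank; the rest is bookkeeping with the finite-index subgroup $N$. It is worth emphasizing that positive rank is used twice and essentially: to guarantee that $|B_M|$ grows like $M^{r}$ with $r\ge 1$ (so that the $O(M^{r-1})$ contribution of the infinite-index cosets is truly negligible), and to ensure that the finite-index subgroup $N$ has infinite cosets, so that a union of $N$-cosets missing even one of them cannot have finite complement.
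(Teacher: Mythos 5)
Your proof is correct. Note that the paper itself does not prove this lemma: it is quoted verbatim from Corvaja--Zannier (Lemma 3.2 of \cite{CZ1}) with only a citation, so there is no in-paper argument to compare against; your write-up is a valid self-contained substitute, and it follows the natural (and, as far as I recall, essentially the original) line of attack. The two key points are both handled properly: a subgroup of $G\cong\mathbb{Z}^r\oplus T$ has infinite index exactly when its rank is at most $r-1$, whence each of its cosets meets $B_M$ in $O(M^{r-1})$ points, while the intersection $N$ of the finite-index $H_u$ still has finite index, so that the complement of $W=\bigcup_{u\in U_1}(h_u+H_u)$ is a union of $N$-cosets each of size $\asymp M^r$ inside $B_M$; comparing growth rates then forces $W=G$. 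The only cosmetic remark is that when $U_1=\emptyset$ you are implicitly taking $N=G$ and $W=\emptyset$, in which case your counting inequality shows directly that the hypothesis cannot hold, so the statement is vacuously true there --- worth a half-sentence, but not a gap.
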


We recall that with $\Cu'_m$ we denoted the curves associated to the other fibration $\mu$.
By construction $\Cu'_m$ intersects the possible ramification divisors (different from $H$) of third type covers, with the only exception of the line $L_{3,4}$ through $P_3$ and $P_4$, in a finite set, which is nonempty for all but finitely many $m$. However any cover of $\widetilde{F}$ must ramify on a curve outside $L_{3,4}\cup H$, since $\widetilde F\setminus \left(L_{3,4}\cup H\right)$ is simply connected, as proved in Theorem \ref{top}.
Then each cover of the third type will be ramified in the affine part of $\Cu'_m$ for all but finitely many $m$. Repeating the argument we used for the covers of second type, we have that for $m$ outside a thin set $T'$, only finitely many points in $\Cu'_m(\O_S)$ lift to rational points of covers of the third type. At the same time any $p \in \Cu'_m(\O_S)$ lies also on $\Cu_l$, where $l=\lambda(p)$, and since it is an $S$-integral point, if $l \notin T$ then $\Cu_l(\O_S)$ is infinite.
Therefore, if there exists $m \in k\setminus T'$ with $\Cu_m'(\O_S)$ infinite, such that $\lambda(p)\notin T$ for infinitely many $p \in \Cu'_m(\O_S)$, we would have a contradiction, since all the points in $\Cu_{\lambda(p)}(\O_S)$ are lifted.

We can then suppose that, for any given  $m\in k\setminus T'$ with $\Cu_m'(\O_S)$ infinite, all but finitely many points  $p\in \Cu'_m(\O_S)$ are such that $\lambda(p)\in T$.
We have that $T$, being a thin set, is contained in a finite union of sets $\phi_j(Z_j(k))$ where $Z_j$ is an absolutely irreducible curve and $\phi_j \colon Z_j \to L$ is a finite map of degree $>1$, ramified over a finite subset of $L \cong \mathbb{P}^1$. We write $\mathcal{R}$ for the union of those branch points, which is a finite set.

The set of branch points of $\lambda$ restricted to $\Cu'_m$ depends {\it a priori} on $m$. Indeed, it turns out that each branch point depends on $m$. This is the analogue of Lemma $3.3$ of \cite{CZ1}.

\begin{lem}\label{geom} For all $m$ such that $\Cu'_m$ is smooth, the restriction to $\Cu'_m$ of $\lambda$ is a  map of degree two, having two ramification points. The value  of $\lambda$ at each of these ramification points is a non-constant (algebraic) function of $m$.
\end{lem}

\begin{proof} We provide a geometric proof of this statement. Let us fix an $m$ so that $\Cu'_m$ is smooth. In order to compute the degree of the map $\lambda$ restricted to $\Cu'_m$, we have to compute the number of points in $\Cu'_m$ where it takes a generic value. Now, by our opening construction, a `value' of the map $\lambda$ is represented by a point of $L$, i.e. by the curve $\Cu_l$ passing through this point. Now $\pi(\Cu_l)$ and $\pi(\Cu_m)$ are two conics passing through $P_3$ and $P_4$, so they generically intersect in two more points. Taking the strict transform one gets two points of intersection and so the map has degree two. The ramified values of $\lambda$ are the conics $\pi(\Cu_l)$ which are tangent to $\pi(\Cu_m')$ in points different from $P_3$ and $P_4$, so that we have exactly two ramification points.
	
We want to prove that these  values are not constant as $m$ varies.
Now, if a ramified value for $\lambda$ were fixed, say equal to $l$, then $\pi(\Cu_l)$ would be tangent to $\pi(\Cu'_m)$ for all values of $m \in \mathbb{P}^1$. But this is impossible since reversing the reasoning with $\mu$ in place of $\lambda$, we have that there are exactly two values of $m$ for which this happens, corresponding to the ramified values of $\mu$.
\end{proof}

Analogously, the $\lambda$-images of the two points at infinity of $\Cu_m'$ are given by non-constant functions of $m$.
As a consequence we have that the set of branch points of $\lambda$ restricted to $\Cu_m'$ and the set of values of $\lambda$ at the points at infinity can intersect $\mathcal{R}$ only for finitely many $m$. Let $m \notin T'$ be one value for which this does not happen and $\Cu'_m(\O_S)$ is infinite. Let $\phi \colon Z \to \mathbb{P}^1$ be one of the morphisms above. The fiber product $W \to \mathbb{P}^1$ of $\lambda\colon \Cu'_m \to \mathbb{P}^1$ and $\phi\colon Z\to \mathbb{P}^1$ has the property that the image of $W(k)$ is $\phi(Z(k))\cap \lambda(\Cu'_m(k))$.
The map $\phi$, having degree $>1$, must ramify over at least two points which are not branch points for $\lambda$, nor images of the two points at infinity, which we denote by $Q$ and $Q'$. Hence the restriction to any irreducible component $V \subset W$ of the map $\pi \colon W \to \Cu'_m$ must ramify somewhere in the affine part of $\Cu'_m$. Let $D=\pi^{-1}(Q+Q')$ be the divisor given by the pullback of the divisor at infinity of $\Cu'_m$. Then there is a set of places $S''$ containing $S$ such that the integral points of $\Cu'_m$ which lifts, actually lift to $V(\O_{S''})$. In symbols, this means $\pi^{-1}(\Cu'_m(\O_S))\cap V(k)\subset V(\O_{S''})$. However using Proposition \ref{trick} we have that $V(\O_{S''})$ is finite, so, repeating the argument for all irreducible components of $W$, we conclude that only finitely many points of $\Cu'_m(\O_S)$ lift to rational points of $W$.

This proves that $\lambda(\Cu'_m(\O_S))\cap \phi(Z(k))$ is finite for each morphism $\phi$, hence $\lambda(\Cu'_m(\O_S))\cap T$ is finite. However this contradicts the assumption that all but finitely many points $p$ in $\Cu'_m(\O_S)$ (which is an infinite set) satisfy $\lambda(p)\in T$.

In all cases we were able to derive a contradiction, then proving the theorem.

\section{Proof of Theorem \ref{fermat}}

Theorem \ref{fregno} is valid for all smooth cubic surfaces, but it is \emph{potential}, since we had to enlarge the base field and $S$. Of course, it is still possible that $F(\O_S)$ is not thin for smaller $k$ and $S$. This is the case for the Fermat cubic surface, which is the variety defined over the rational numbers $\mathbb{Q}$ by the equation
\begin{equation*}
x^3+y^3+z^3+w^3=0
\end{equation*}
In the literature it is also referred to as the surface $x^3+y^3+z^3=w^3$, but this is clearly isomorphic to the one above under the involution $w \mapsto -w$, so that it is immaterial to study one or the other. We take $S=\{\infty\}$ and $H \coloneqq (w=0)$ as the divisor at infinity, so that the $S$-integral points are exactly the points with integer coordinates on the affine surface
\begin{equation*}
x^3+y^3+z^3+1=0
\end{equation*}  
First, we provide the explicit formulas of the blowup, which can be found at the link \cite{Elk} of Noam Elkies' webpage.
We denote by $r,s,t$ the homogeneous coordinates in $\mathbb{P}^2$. Then the map $\mathbb{P}^2 \dashrightarrow F$ is given by 
\begin{align*}
w&=-(s+r)t^2 + (s^2+2r^2) t - s^3 + rs^2 - 2r^2s - r^3 \\
x&=t^3 - (s+r)t^2 + (s^2+2r^2) t + rs^2 - 2r^2s + r^3\\
y&=-t^3 + (s+r)t^2 - (s^2+2r^2) t + 2rs^2 - r^2s + 2r^3\\
z&=(s-2r)t^2 + (r^2-s^2) t + s^3 - rs^2 + 2r^2s - 2r^3
\end{align*}
and its birational inverse $F \to \mathbb{P}^2$ by
\begin{align*}
r&=yz - wx\\
s&=wy - wx + xz + w^2 - wz + z^2\\
t&=y^2 - xy + wy + x^2 - wx + xz
\end{align*}
unless $w+y$, $x+z$ are both zero, in which case $r,s,t$ are proportional to $x+y,y,x$.

This map $[w:x:y:z] \mapsto [r:s:t]$ blows down the six disjoint lines, as follows: 
\begin{itemize}
	\item	$E_1 \colon w+\z z = x+\z y = 0$ \, to \, $P_1=[-\z:1:1]$; 
	\item	$E_2 \colon w+\bar\z z = x+\bar\z y = 0$ \, to \, $P_2=[-\bar\z:1:1]$; 
	\item	$E_3 \colon x+\z z = y+\z w = 0$ \, to \, $P_3=[0:1:-\z]$; 
	\item	$E_4 \colon x+\bar\z z = y+\bar\z w = 0$ \, to \, $P_4=[0:1:-\bar\z]$; 
	\item	$E_5 \colon y+\z z = w+\z x = 0$ \, to \, $P_5=[1:-\bar\z:-\z]$; 
	\item	$E_6 \colon y+\bar\z z = w+\bar\z x = 0$ \, to \, $P_6=[1:-\z:-\bar\z]$; 
\end{itemize}
where $\z$ and $\bar \z$ are the primitive cubic roots of unity.

Observe that the couples of points $(P_1,P_2)$, $(P_3,P_4)$ and $(P_5,P_6)$ are paired under the action of $\Gal(\mathbb{Q}(\z)/\mathbb{Q})$, so that the only lines connecting two blown-up points which are defined over $\mathbb{Q}$ are the three lines connecting these pairs. These are parametrized as
\begin{itemize}
	\item $[r:s:s]$, which maps to $[-x:x:y:-y]$ with $x=-(r+s)$, $y=2r-s$,
	\item $[0:s:t]$, which maps to $[s:-t:t:-s]$,  
	\item $[s+t:s:t]$ which maps to $[s:-t:-s:t]$.
\end{itemize} 
Similarly, of the fifteen pencils of conics passing through four of the six blown-up points, only three are defined over $\mathbb{Q}$ and they are the ones passing through two of the above couples.
To produce infinitely many integer points we want to apply Theorem \ref{interi} to the curves of the blow-up of these three pencils defined over $\mathbb{Q}$, but we have to check which curves satisfy the hypotheses. In fact, the group of units $\mathbb{Z}^{*}=\left\{1,-1 \right\}$ is finite, so we have to use the condition $(2)$ in Theorem \ref{interi}. Since we are searching for the rational integer points, $S$ consists in the archimedean place of $\mathbb{Q}$, so that we have to hope that the archimedean place splits in the extension at infinity, or equivalently that the extension is real quadratic. This means that the discriminant of the quadratic polynomial defining the extension at infinity is non-square and positive. Of course, since the blow-up is an isomorphism outside the blown-up points, the points at infinity are quadratic conjugates if and only if their images in $\mathbb{P}^2$ are so. Then we will check the positivity of the discriminant using the equations of the curves in $\mathbb{P}^2$. 

First, observe that the points on the line $[w:x:y:z]=[s:-t:-s:t]$ which are integral with respect to $w=0$ are exactly those for which $[1:-t/s:-1:t/s]$ has integer coordinates, i.e. $[s:t]=[1:n]$ where $n$ is an integer. The corresponding points in $\mathbb{P}^2$ have coordinates $[n+1:1:n]$ with $n \in \mathbb{Z}$.

The idea is to select a conic of the pencil trough $P_1,P_2,P_3,P_4$ which passes trough an integral point of the above line. If condition on the discriminant is satisfied, then, since the blown-up conic has one integer point, we can apply Theorem \ref{interi} to show that it has infinitely many. Repeating the argument for all the integer points of the line we obtain the sought Zariski dense set of integer points. It remains to check condition $(2)$ of Theorem \ref{interi}. 

The linear system of conics through $P_1,P_2,P_3,P_4$ has equation
\begin{equation}\label{coni}
C_{[a:b]}\colon a(-rs + rt) + b(r^2 - rs + s^2 - st + t^2)=0
\end{equation}
so that the conic $C_n$ through $[n+1:1:n]$ is given by
\begin{equation*}
[a:b]=[2n^2+1:1-n^2].
\end{equation*}

After some calculations we find that the discriminant of the quadratic equation for the points at infinity is
\begin{equation}\label{delta}
\Delta=\frac{-36u^3-54u+9}{(2u+1)^3}
\end{equation}
where $u=b/a$. The solution for $\Delta>0$ is
\begin{equation*}
-\frac{1}{2}< u < 0.1637399876771411...
\end{equation*}
In particular for
\begin{equation}\label{un}
u=\frac{1-n^2}{2n^2+1}
\end{equation}
the condition is satisfied for all $n \neq 0$. We have to check when $\Delta$ is non-square. Substituting \eqref{un} in \eqref{delta} we find
\begin{equation*}
	\Delta=12n^6-3
\end{equation*}
The diophantine equation
\begin{equation*}
	m^2=12n^6-3
\end{equation*}
has only finitely many integer solutions, since it represents an hyperelliptic curve of genus $2$. Then for all sufficiently large integer $n$ we have that the strict transform $\Cu_n$ of the conic $C_n$ passing through the integral point $[n+1:1:n]$ satisfies the hypotheses of Theorem \ref{interi} and so it has infinitely many integer points. Via this geometric construction we have proved
\begin{thm}
	The integer points $F(\mathbb{Z})$ are Zariski dense in the Fermat cubic surface.
\end{thm}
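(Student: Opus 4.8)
The plan is to exhibit inside $F$ an infinite family of curves, each carrying infinitely many integer points and no two of which share more than finitely many points, so that their union cannot be confined to any single curve. Concretely, I would work with the blow-up model $\pi\colon\widetilde F\to\mathbb{P}^2$ and the pencil \eqref{coni} of conics through $P_1,P_2,P_3,P_4$, which is one of the three pencils defined over $\mathbb{Q}$. First I would parametrize integer points on the $\mathbb{Q}$-rational line $[w:x:y:z]=[s:-t:-s:t]$: those integral with respect to $w=0$ are exactly the points $[s:t]=[1:n]$ with $n\in\mathbb{Z}$, whose images in $\mathbb{P}^2$ are $[n+1:1:n]$. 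Through each of these passes a unique conic $C_n$ of the pencil, the one with $[a:b]=[2n^2+1:1-n^2]$, and its strict transform $\mathcal{C}_n$ is generically a curve with two points at infinity (the two residual intersections of $C_n$ with the cubic $D=\pi(H)$ beyond the four base points). Since $\mathcal{C}_n$ already contains the integer point coming from $[n+1:1:n]$, it suffices to show that $\mathcal{C}_n$ satisfies the hypotheses of Theorem \ref{interi}, for then it has infinitely many integer points.

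The delicate point is that here $S=\{\infty\}$ and $\mathbb{Z}^*=\{\pm1\}$ is finite, so hypothesis (1) of Theorem \ref{interi} is unavailable and I must instead secure hypothesis (2): the two points at infinity of $\mathcal{C}_n$ must be \emph{real} quadratic conjugates, so that the archimedean place splits in the quadratic field they generate. Because the blow-up is an isomorphism away from the $P_i$, these points are conjugate over $\mathbb{Q}$ if and only if their images in $\mathbb{P}^2$ are, and the relevant quadratic extension is real precisely when the discriminant $\Delta$ of the quadratic defining the residual intersections is positive. I would compute $\Delta$ as a function of $u=b/a$, obtaining \eqref{delta}, and verify that $\Delta>0$ holds on an interval of $u$ that contains every value $u=(1-n^2)/(2n^2+1)$ with $n\neq0$ arising from the $C_n$; substituting \eqref{un} collapses $\Delta$ to the single arithmetic function $12n^6-3$.

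I expect the genuine obstacle to be the remaining \emph{nondegeneracy} requirement: the extension must actually be quadratic, i.e.\ $\Delta=12n^6-3$ must be non-square, since otherwise the two points at infinity would already be rational and Theorem \ref{interi} would say nothing. The integers $n$ for which $12n^6-3$ is a perfect square correspond to integer points on the curve $m^2=12n^6-3$, which is hyperelliptic of genus $2$; by Siegel's theorem it has only finitely many integer points. Hence $\Delta$ is non-square, and Theorem \ref{interi} applies, for all sufficiently large $n$. This is the step where a soft geometric construction is forced to rely on a nontrivial diophantine finiteness statement, and it is what makes condition (2) rather than (1) carry the argument over $\mathbb{Z}$.

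Finally, to upgrade ``infinitely many integer points on each $\mathcal{C}_n$'' to Zariski density of $F(\mathbb{Z})$, I would note that the $\mathcal{C}_n$ are infinitely many distinct fibers of the conic fibration $\lambda$, hence pairwise disjoint away from the blown-up base locus. A fixed curve $\Gamma\subset F$ either is one of these fibers or meets each fiber in finitely many points, so $\Gamma$ can absorb the integer points of at most finitely many $\mathcal{C}_n$. Therefore the integer points, spread over infinitely many disjoint infinite curves, lie on no single $\Gamma$, which gives the Zariski density.
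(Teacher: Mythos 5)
Your proposal is correct and follows essentially the same route as the paper: the same pencil \eqref{coni}, the same integral points $[n+1:1:n]$ on the rational line, the same discriminant computation reducing condition (2) of Theorem \ref{interi} to the positivity and non-squareness of $12n^6-3$, with the latter handled via Siegel's theorem on the genus-$2$ curve $m^2=12n^6-3$. The only difference is that you spell out the final passage from infinitely many fibers with infinitely many integer points to Zariski density, which the paper leaves implicit.
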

This result is not new, since it is implicit in the paper \cite{lehmer} of Lehmer. He proved that the curves in $F$ of equation\footnote{Here we have changed the sign of the variables with respect to the formulas in the introduction, since these referred to $x^3+y^3+z^3=1$, while we are studying $x^3+y^3+z^3+1=0$.}
\begin{equation*}
	\begin{cases}
	1+z=-3m^2(x+y)\\
	x^3+y^3+z^3+1=0
	\end{cases}
\end{equation*}
with $m \in \mathbb{Z}$, have infinitely many integer points, from which it follows the Zariski density. This method is not different from ours, since Lehmer's curves are strict transforms of conics passing through $P_1,P_2,P_5,P_6$.

Unfortunately the integer points of the union of the curves $\Cu_n$ for $n=1,2,\dots$ are not sufficient for the IHP over $\mathbb{Z}$, since they form a thin set. In fact, we can consider the surface
\begin{equation*}
Y=\{(p,q)\in F \times L_{5,6} \, \vert \, q \in \Cu_{\lambda(p)}\cap L_{5,6}  \}
\end{equation*} 
where $L_{5,6}$ is the line passing through $P_5$ and $P_6$. Then $Y$ is endowed with the natural projection $\rho \colon Y \to F$ over the first coordinate. This is a degree two morphism, ramified over the curve $\Cu_l$ tangent to $L_{5,6}$ and it clearly lifts all the integral points we produced.

However this method is much more powerful. In fact we have produced an infinite number of curves $\Cu_n$ with infinitely many integral points, and we still dispose of two other families of conics, the ones through $P_1,P_2,P_5,P_6$ and the ones through $P_3,P_4,P_5,P_6$. Then we can repeat the above construction for both families using \emph{every} curve $\Cu_n$ in place of the line $[s:-t:-s:t]$. Of course we have to study when the hypotheses of Theorem \ref{interi} are satisfied for the other two pencils. We then provide a detailed analysis of these pencils of conics.

Let us begin with the conics through $P_1,P_2,P_5,P_6$. They are expressed by the equations
\begin{equation*}
D_{[a:b]}\colon a(s-t)(r-s-t) + b(t^2-tr+r^2)
\end{equation*}
where $[a:b]\in \mathbb{P}^1$. The line through the points at infinity is
\begin{equation*}
ar+(b+2a)s-(b+a)t=0
\end{equation*}
and the discriminant at infinity is given by
\begin{equation*}
\Delta=-3u^4 - 12u^3 - 18u^2 + 9
\end{equation*}
where $u=b/a$. Solving the inequality $\Delta>0$ we find
\begin{equation*}
-1<u < 0.587401055408971...
\end{equation*}
In particular, it is sufficient that $u$ lies in
\begin{equation*}
-1< u< \frac{1}{2}
\end{equation*}
From the definition of $u$ we have that
\begin{equation*}
u=\frac{(t-s)(r-s-t)}{t^2-tr+r^2}
\end{equation*}
for any $[r:s:t] \in D_{[a:b]}$, so that, if there exists an integer point in $\Cu_n(\mathbb{Z})$ such that its blown-down point $[r:s:t]$ satisfies 
\begin{equation*}
-1< \frac{(t-s)(r-s-t)}{t^2-tr+r^2}< \frac{1}{2}
\end{equation*}
then the strict transform $\mathcal{D}_{[a:b]}$ of the conic $D_{[a:b]}$ through this point will have positive discriminant at infinity and so, if $\Delta$ is non-square, by Theorem \ref{interi} it will have infinitely many integral points. Let us then study the above inequalities.

First, let us observe that $t^2-tr+t^2$ is always positive, so that the inequality is equivalent to
\begin{equation*}
-t^2+tr-r^2<tr-t^2-sr+s^2< \frac{1}{2}(t^2-tr+r^2)
\end{equation*}
The left-hand one is
\begin{equation*}
r^2-rs+s^2>0
\end{equation*}
which is always true. The right-hand one is
\begin{equation*}
3t^2-3tr+r^2+2sr-2s^2>0
\end{equation*}
which in the affine plane $(s \neq 0)$ becomes
\begin{equation*}
3t^2-3tr+r^2+2r-2>0
\end{equation*}
This defines the outer part of an ellipse, while from \ref{coni} the equation defining $C_n$ in the affine plane $s \neq 0$ is 
\begin{equation*}
n^2(-r^2 + 2rt - r - t^2 + t - 1) + r^2 + rt -2r + t^2 - t + 1=0
\end{equation*}
and for $n$ sufficiently large it is an hyperbola lying in the area outside the ellipse. Thus we have proved that for almost all $n$, the curves of the family $\mathcal{D}$ passing through an integral point of $\Cu_n$ have positive discriminant at infinity.

We now repeat the same argument for the conics through $P_3,P_4,P_5,P_6$. They are expressed by the equations
\begin{equation*}
E_{[a:b]}\colon ar(t+s-r)+b(4r^2-2rt-2rs+t^2-ts+s^2)
\end{equation*}
where $[a:b]\in \mathbb{P}^1$. The line through the points at infinity is
\begin{equation*}
as + b(r - s - t)=0
\end{equation*}
and the discriminant at infinity is given by
\begin{equation*}
\Delta=u^4 - 18u^2 + 36u - 27
\end{equation*}
where $u=a/b$. Then we can solve the inequality $\Delta>0$ to find
\begin{equation*}
u \in \left]-\infty,-5.107243650047037...\right[ \, \cup \,  \left]3,+\infty\right[
\end{equation*}
In particular it is sufficient that $u$ lies in
\begin{equation*}
u \in \left]-\infty,-6\right[ \, \cup \,  \left]3,+\infty\right[
\end{equation*}
From the definition of $u$ we have that
\begin{equation*}
u=\frac{4r^2-2rt-2rs+t^2-ts+s^2}{r(r-s-t)}
\end{equation*}
for any $[r:s:t] \in E_{[a:b]}$. As above we have reduced to study inequalities. Let us first consider the second one
\begin{equation*}
\frac{4r^2-2rt-2rs+t^2-ts+s^2}{r(r-s-t)}>3
\end{equation*}
We work in the affine plane $(s\neq 0)$, i.e. we normalize with respect to $s$. After some manipulations we get
\begin{equation*}
\frac{r^2+rt+t^2+r-t+1}{r(r-1-t)}>0
\end{equation*}
The numerator is always positive since
\begin{equation*}
r^2+rt+t^2+r-t+1=\left( \frac{r}{\sqrt{2}}+\frac{t}{\sqrt{2}}\right)^2+\left( \frac{r}{\sqrt{2}}+\frac{1}{\sqrt{2}}\right)^2+\left( \frac{t}{\sqrt{2}}-\frac{1}{\sqrt{2}}\right)^2>0
\end{equation*}
so that the above inequality is really
\begin{equation*}
r(r-1-t)>0
\end{equation*}
The other inequality is
\begin{equation*}
\frac{4r^2-2rt-2r+t^2-t+1}{r(r-1-t)}<-6
\end{equation*}
Now suppose $r(r-1-t)<0$ so that multiplying we get
\begin{equation*}
4r^2-2rt-2r+t^2-t+1>-6r(r-1-t)
\end{equation*}
which is
\begin{equation*}
10r^2-8rt-8r+t^2-t+1>0
\end{equation*}
One easily see
that for almost all $n$, $C_n$ lies in the union of
\begin{equation*}
\big(r(r-1-t)>0\big) \cup \big(\big(r(r-1-t)<0\big) \cap \big(10r^2-8rt-8r+t^2-t+1>0\big)\big)
\end{equation*}
so that the condition on the discriminant is satisfied.

Before finishing the proof, we are going to show the relation between our method and Lehmer's construction of families of integer solutions. He starts with the solutions
\begin{equation}\label{para}
x=-9m^4, \qquad y=9m^4-3m, \qquad z=9m^3-1
\end{equation}
where $m$ is an integer. This parametrizes the curve
\begin{equation*}
	\begin{cases}
	(x+y)^4+9x=0\\
	x^3+y^3+z^3+1=0
	\end{cases}
\end{equation*}
giving an isomorphism with $\mathbb{A}^1$ with inverse
\begin{equation*}
	(x,y,z) \mapsto -\frac{x+y}{3}
\end{equation*}
The curve has only one point at infinity, namely $[0:1:-1:0]$. If we blow down the curve, we find that its image is the conic
\begin{equation*}
	-2r^2+r(s+t)+st=0
\end{equation*}
It passes through the two blown-up points $P_1$ and $P_2$ and, having only $[0:1:-1:0]$ at infinity, it has one more intersection point with the cubic in $[0:0:1]$, with tangency multiplicity $4$.
For a given $m$, Lehmer produces an infinite number of parametric solutions on the curve
\begin{equation*}
1+z=-3m^2(x+y)
\end{equation*}
starting from the solution \eqref{para} and using the theory of Pell's equations.
If we blow-down this curve, we find
\begin{equation*}
	3m^2(r^2-rs+s^2)-(r^2-rt+t^2)=0
\end{equation*}
which is exactly the conic $D_{[a:b]}$ with $[a:b]=[-3m^2:3m^2-1]$. Then
\begin{equation*}
	u=\frac{b}{a}=\frac{3m^2-1}{-3m^2}=\frac{1}{3m^2}-1
\end{equation*}
satisfies the condition on the positivity of the discriminant
\begin{equation*}
	-1<\frac{1}{3m^2}-1<\frac{1}{2}
\end{equation*}
so that our method applies to these curves.

We now come to the proof of Theorem \ref{fermat}. We consider the morphism $\lambda \colon F \to \mathbb{P}^1$ given by the composition of the blow-down map and the rational map to $\mathbb{P}^1$ associated with the linear system. Geometrically we map a point $Q \in F$ to the projective point $[a:b]$ defining the conic $D_{[a:b]}$ through $\pi(Q)$ and $P_1,P_2,P_5,P_6$. \emph{A priori} $\lambda$ is a rational map, since it could not be defined over the exceptional divisors $E_1,E_2,E_5,E_6$. However for a given point of, say, $E_1$, there passes exactly one strict transform of a conic of the linear system, so we can map this point to the projective point associated with that conic.
Thus $\lambda$ can be extended to a morphism $\lambda \colon F \to \mathbb{P}^1$ with fiber $\mathcal{D}_{[a:b]}$ over $[a:b]$.
Analogously, we call $\mu \colon F \to \mathbb{P}^1$ the map with fibers $\mathcal{E}_{[a:b]}$.

We now prove that $\lambda(\{p \in F(\mathbb{Z})\vert \exists n \in \mathbb{N} \,\, p \in \Cu_n \})=\lambda(\bigcup_n \Cu_n(\mathbb{Z}))$ is not thin in $\mathbb{P}^1$. In other words, the set of points in $\mathbb{P}^1$ parametrizing the curves of $\mathcal{D}$ passing through an integer point of $\Cu_n$ for some $n$ is not thin. Moreover the curves of $\mathcal{D}$ whose discriminant is a square are parametrized by a thin set, since they are given by the image of the projection from the curve
\begin{equation*}
	-3u^4 - 12u^3 - 18u^2 + 9=m^2
\end{equation*}
to $\mathbb{P}^1$ mapping $(u,m)$ to $[1:u]=[a:b]$.
The same of course is true for $\mu$, and so we have produced two independent fibrations with a non thin set of fibers (i.e. parametrized by a non-thin set) with infinitely many integer points. We can then apply the same proof %\footnote{The reader may ask why the proof of Theorem \ref{fregno} translates even to this case. The reason is simply that there is no difference, the only one being in the \emph{construction} of the set of integral points, where we used the line $L$ to produce the sought fibrations. However the line $L$ has no role in the proof of the IHP.}
of Theorem \ref{fregno} to deduce Theorem \ref{fermat}.

Suppose by contradiction that $T=\lambda(\bigcup_{n}\Cu_n(\mathbb{Z}))$ is thin. Then it is contained in a finite union of sets $\phi_i(Z_i(\mathbb{Q}))$, where $Z_i$ is an absolutely irreducible curve and $\phi_i \colon Z_i \to \mathbb{P}^1$ is a finite map of degree $>1$, ramified over a finite subset of $\mathbb{P}^1$. We write $\mathcal{R}$ for the union of those branch points.
The analogue of Lemma \ref{geom} is:
\begin{lem}
	Let $\Cu_t\coloneqq \Cu_{[1:t]} $ be the pencil defined by the equation \eqref{coni}. For $\Cu_t$ smooth, the restriction of $\lambda$ to $\Cu_t$ is a degree two map with two ramification points. The value of $\lambda$ at any of these points is a non-constant algebraic function of $t$.
\end{lem}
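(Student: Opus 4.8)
The plan is to follow \emph{verbatim} the geometric argument of Lemma \ref{geom}, the only change being the configuration of base points shared by the two pencils involved. Here the fibers of $\lambda$ are the conics $\mathcal{D}_{[a:b]}$ through $P_1,P_2,P_5,P_6$, while the $\Cu_t$ are the conics $C_{[1:t]}$ of \eqref{coni} through $P_1,P_2,P_3,P_4$; thus the two pencils share exactly the base points $P_1,P_2$.

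First I would compute the degree of $\lambda$ restricted to $\Cu_t$. A value of $\lambda$ is a point $[a:b]\in\mathbb{P}^1$, that is, a conic $D_{[a:b]}$, and a point $Q\in\Cu_t$ maps to $[a:b]$ precisely when its blow-down $\pi(Q)$ lies on $D_{[a:b]}$. So I must count the points of $C_{[1:t]}$ lying on a generic $D_{[a:b]}$. By B\'ezout the two conics meet in four points, two of which are the common base points $P_1,P_2$; hence there remain exactly two variable intersection points, which correspond under the strict transform to two points of $\Cu_t$, and $\lambda|_{\Cu_t}$ has degree two. Since $\Cu_t$ is smooth it is a rational curve isomorphic to $\mathbb{P}^1$, so the Hurwitz formula applied to a degree-two cover $\mathbb{P}^1\to\mathbb{P}^1$ forces $\sum_x(e_x-1)=2$, giving exactly two simple ramification points. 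Geometrically these are the values $[a:b]$ for which the two free intersection points of $D_{[a:b]}$ and $C_{[1:t]}$ collide, i.e.\ the conics $D_{[a:b]}$ tangent to $C_{[1:t]}$ at a point distinct from $P_1,P_2$.

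It remains to show that each ramified value is a non-constant function of $t$, which is the substantive step. I would argue by contradiction exactly as in Lemma \ref{geom}: if one ramified value were a fixed $[a_0:b_0]$, then the single conic $D_{[a_0:b_0]}$ would be tangent to $C_{[1:t]}$ for every $t$. To rule this out I would reverse the roles of the two pencils. Let $\kappa\colon F\to\mathbb{P}^1$ be the auxiliary morphism whose fibers are the curves $\Cu_t$; since $D_{[a_0:b_0]}$ and the members $C_{[1:t]}$ again share only $P_1,P_2$, the identical B\'ezout computation shows that the restriction of $\kappa$ to the strict transform of $D_{[a_0:b_0]}$ is again a degree-two map $\mathbb{P}^1\to\mathbb{P}^1$, hence by Hurwitz has exactly two ramification points. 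These ramification values are precisely the two values of $t$ for which $C_{[1:t]}$ is tangent to $D_{[a_0:b_0]}$. Thus tangency occurs for exactly two values of $t$, contradicting tangency for all $t$. The value of $\lambda$ at each ramification point is manifestly algebraic in $t$, being cut out by the vanishing of the discriminant of the restricted degree-two pencil, so non-constancy is exactly the desired conclusion.

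The main obstacle I anticipate is the bookkeeping around the shared base locus: one must make sure that the two ramification points of $\lambda|_{\Cu_t}$ genuinely correspond to tangencies occurring \emph{away} from $P_1,P_2$, so that they are honest ramification of the restricted map rather than artifacts of the common base points on the blown-down model, and, in the reversal, that $D_{[a_0:b_0]}$ is a smooth member of its pencil so that its strict transform is $\mathbb{P}^1$ and the degree-two count is legitimate. Both points are dealt with, as in Lemma \ref{geom}, by discarding the finitely many degenerate (reducible) members of each pencil.
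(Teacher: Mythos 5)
Your proposal is correct and is essentially the paper's own argument: the paper states this lemma as ``the analogue of Lemma \ref{geom}'' and leaves the proof implicit, and the intended proof is precisely the geometric one you reproduce --- the two pencils share the base points $P_1,P_2$, so by B\'ezout a generic member of one meets a member of the other in two further points (degree two), ramification corresponds to tangency away from the base points, and non-constancy of the ramified values follows by reversing the roles of the two pencils, which bounds the tangent members to two. Your added care with Hurwitz and with discarding degenerate members of the pencils is consistent with, and slightly more explicit than, the paper's treatment.
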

Analogously, the $\lambda$-images of the two points at infinity of $\Cu_n$ are given by non-constant functions of $n$.
As a consequence we have that the set of branch points of $\lambda$ restricted to $\Cu_n$ and the set of values of $\lambda$ at the points at infinity can intersect $\mathcal{R}$ only for finitely many $n$. Let $n$ be one integer for which this does not happen. Let $\phi \colon Z \to \mathbb{P}^1$ be one of the morphisms above. The fiber product $W \to \mathbb{P}^1$ of $\lambda\colon \Cu_n \to \mathbb{P}^1$ and $\phi\colon Z\to \mathbb{P}^1$ has the property that the image of $W(\mathbb{Q})$ is $\phi(Z(\mathbb{Q}))\cap \lambda(\Cu_n(\mathbb{Q}))$.
The map $\phi$, having degree $>1$, must ramify over at least two points which are not branch points for $\lambda$, nor images of the two points at infinity. Hence the restriction to any irreducible component $V \subset W$ of the map $\pi \colon W \to \Cu_n$ must ramify somewhere and not above the points at infinity $Q$ and $Q'$. Let $D=\pi^{-1}(Q+Q')$ be the divisor of $V$ given by the pullback of the divisor at infinity of $\Cu_n$. Then there is a set of places $S$, containing the place at infinity, such that the integral points of $\Cu_n$ which lift to rational points of $V$, actually lift to $V(\Z_S)$. In symbols, this means $\pi^{-1}(\Cu_n(\mathbb{Z}))\cap V(\mathbb{Q})\subset V(\Z_S)$. However using Proposition \ref{trick} we have that $V(\Z_S)$ is finite so, repeating the argument for all irreducible component of $W$, we conclude that only finitely many points of $\Cu_n(\Z)$ lift to rational points of $W$.

This proves that $\lambda(\Cu_n(\Z))\cap \phi(Z(k))$ is finite for each morphism $\phi$, hence $\lambda(\Cu_n(\Z))\cap T$ is finite. However this is absurd since $\lambda(\Cu_n(\Z))\subset T$ and $\Cu_n(\Z)$ is infinite.

As stated above, this concludes the proof of the theorem, since we have obtained two independent fibrations with a non thin set of fibers with infinitely many integer points, and the same proof of Theorem \ref{fregno} applies.

\begin{rem}
	Our method for the construction of the non-thin set of $S$-integral points has a geometrical interpretation. Indeed, one chooses
	\begin{enumerate}
		\item a point $P$ on the line,
		\item a point $Q$ on the curve of $\Cu$ passing trough $P$,
		\item a point $R$ on the curve of $\mathcal{D}$ passing trough $Q$.
	\end{enumerate}
	Then there are three parameters all varying in $\mathbb{P}^1$, since the lines and conics can be identified with $\mathbb{P}_1$. Geometrically, we get a rational map $\mathbb{P}_1^3 \dashrightarrow F$ where $\mathbb{P}_1^3 \coloneqq \mathbb{P}_1\times \mathbb{P}_1 \times \mathbb{P}_1$, which sends the three parameters to the point on the curve of $\mathcal{D}$ identified by the above construction. We then have two maps $\mathbb{P}_1^3 \dashrightarrow F$, one for $\mathcal{D}$ and one for $\mathcal{E}$.
	This delineates a difference with the method of Theorem \ref{fregno}, where we considered the integral points on a line and then the conics trough these points, so that we had two rational maps $\mathbb{P}_1^2 \dashrightarrow F$.
	This is also the case for the proof of Theorem \ref{K3} of Corvaja and Zannier, where they use two parameters for the construction of the non-thin set of rational points.
\end{rem}

\section{The WIHP for the complement of the smooth plane cubic}

In \cite{beukers}, Beukers proved the following.
\begin{thm}\label{beu}
	Let $D \subset \mathbb{P}^2$ be a curve of degree $\le 3$ with at most normal crossing singularities, defined over a number field $k$, and let $X=\mathbb{P}^2\setminus D$. Then there exists a finite extension of $k$ and a finite set of places $S$ such that $X(\mathcal{O}_S)$ is Zariski-dense in $X$.
\end{thm}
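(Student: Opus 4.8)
The plan is to reduce the statement to the production of a Zariski-dense family of rational curves inside $X$, each carrying infinitely many $S$-integral points; Zariski density of $X(\O_S)$ is then immediate. How one produces such a family depends on the degree and the singularities of $D$. The genuinely interesting case is that of a smooth cubic, which I treat first; the lower-degree cases ($\deg D\le 2$) and the degenerate cubics with normal crossings (a triangle, a conic and a line, or a nodal cubic) are either toric or handled by exactly the same conic construction, and I would indicate this at the end.

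When $D$ is smooth it is a smooth anticanonical divisor of $\P^2$, because $-K_{\P^2}=\O_{\P^2}(3)$; hence potential density for the pair $(\P^2,D)$ is exactly the Del Pezzo theorem of Hassett--Tschinkel recalled above (\cite{Hats}), and this already proves the statement. To stay self-contained and to connect with the fibration method of the previous sections, I would instead exhibit the family explicitly and appeal to Theorem \ref{interi}. Identifying $D$ with an elliptic curve, a plane conic $C$ cuts out on $D$ a degree-$6$ divisor in the class of twice the hyperplane section, and the restriction map from quadrics on $\P^2$ to sections of this class is an isomorphism of six-dimensional spaces; thus conics correspond bijectively to effective divisors in this class. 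Imposing that such a divisor be supported on only two points, i.e.\ that $C$ have contact of order $3$ with $D$ at each of two points $P,P'$, cuts the five-dimensional system of conics down to a one-dimensional subfamily (a group-law computation on $D$ shows that $3P+3P'$ lies in the fixed class precisely when $P+P'$ ranges over a coset of the $3$-torsion). For a member $C$ of this subfamily one has $C\cap D=3P+3P'$, so $C\setminus D\cong\G_m$ has exactly the two points $P,P'$ at infinity, and Theorem \ref{interi} applies. As $C$ varies these copies of $\G_m$ are not confined to a curve, so they sweep out a Zariski-dense subset of $X$.

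The hard part is arithmetic rather than geometric. After enlarging $S$ so that $\O_S^{*}$ is infinite, hypothesis $(1)$ of Theorem \ref{interi} holds uniformly, so every member of the family \emph{through an $S$-integral point} automatically carries infinitely many of them; this is why one may work with the potential statement. What remains, and what forces the passage to a finite extension $K$ and a suitable $S$, is to guarantee both that the relevant conics are defined over $K$ (arranged by enlarging $K$ so that the pairs $\{P,P'\}$ of interest become $K$-rational) and that the subfamily actually meets $X(\O_S)$ along a Zariski-dense set of parameters. The latter I would obtain as in the construction of the dense set $F(\O_S)$ in the preceding section: produce one seed $S$-integral point, observe that the conic through it has infinitely many $S$-integral points by Theorem \ref{interi}, and propagate along a curve transverse to the family so that infinitely many parameter values carry integral points. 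The degenerate cubics are then disposed of by the same recipe, choosing a pencil whose general member meets $D$ in only two moving points so that Theorem \ref{interi} again applies; alternatively one simply invokes \cite{beukers}.
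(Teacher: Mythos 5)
Your route is genuinely different from the paper's. The paper does not reprove Beukers' theorem conic by conic: it records the statement with the citation to \cite{beukers}, reduces the three degenerate cases to density for $\G_m^2$, and for the smooth cubic $D=(G(x,y,z)=0)$ it passes to the cyclic triple cover $\widetilde F=(w^3=G(x,y,z))\to\P^2$, a smooth cubic surface whose hyperplane section $H=(w=0)$ sits over $D$; Theorem \ref{fregno} makes $F(\O_S)$ Zariski dense (indeed non-thin) after a finite extension, and Corollary \ref{belo} pushes the density down to $X$. What you propose instead is essentially Beukers' original construction, the conics with two points of triple contact with $D$. Your opening observation that $(\P^2,D)$ is itself a Del Pezzo surface with smooth anticanonical divisor, so that the quoted Hassett--Tschinkel theorem already gives the smooth-cubic case, is correct and is the cheapest complete argument; but that, like the closing ``invoke \cite{beukers}'', is a citation of the result rather than a proof of it, and the point of this section of the paper is precisely that its own construction yields more (the WIHP) than Beukers' conics can.

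The explicit construction you sketch has a real gap at the arithmetic step. For each element of the relevant $3$-torsion coset the triple-contact conics form a one-dimensional family parametrized by $P\in D$ (with $C_P=C_{P'}$), but this family is \emph{not} a linear pencil: it is a curve of degree $>1$ in the $\P^5$ of conics, so through a general point of $\P^2$ there pass several members, and the member(s) through a given $K$-rational point are cut out by an equation of degree $>1$ in the parameter and need not be defined over $K$. Hence ``produce one seed integral point and propagate along a transverse curve'' does not work as stated: the seeds on your auxiliary curve do not select $K$-rational members of the family, and the points that do lift to the incidence correspondence form exactly the kind of thin set the paper warns about when it explains why Beukers' conics cannot give the WIHP. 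This is the difficulty the paper's own argument is engineered to avoid: on the cubic surface the two fibrations are honest pencils (conics through four of the six blown-up points), so the member through a point $q$ is unique and defined over $K(q)$, and the line $L\cong\A^1$ supplies a non-thin set of seeds. To complete your route you would either have to reproduce Beukers' explicit production of integral points on the triple-contact conics, or replace your family by a pencil, at which point you are back on the cubic surface. A smaller omission: you also need infinitely many $K$-rational parameters $P$ (e.g.\ $D(K)$ infinite), which costs a further finite extension and an argument you have not supplied.
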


Clearly, it suffices to study the complements of maximal degree $\deg D=3$, since Zariski density is easier to accomplish with a smaller $D$. The four cases are:
\begin{enumerate}
	\item three lines in general position,
	\item a smooth conic and a non tangent line,
	\item a singular irreducible cubic,
	\item a smooth cubic.
\end{enumerate}

In all these cases, the strategy of the proof relies on constructing an infinite family of rational curves with two points at infinity (i.e. intersecting $D$, the divisor at infinity) and with infinitely many $S$-integral points. This gives the Zariski-density of $X(\mathcal{O}_S)$.

Once obtained the Zariski-density, one could try to see if these surfaces provide a positive instance of Problem \ref{wihp}.
\begin{thm}
	The complements of the curves above have the WIHP.
\end{thm}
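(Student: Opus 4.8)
The plan is to treat the four cases of $D$ (three lines in general position, a conic plus a non-tangent line, a singular irreducible cubic, and a smooth cubic) and to show in each that $X = \mathbb{P}^2 \setminus D$ has the WIHP, meaning that after removing the rational points lifted by finitely many covers ramified over a non-empty divisor, a Zariski-dense set of $S$-integral points survives. The three rational or simply-reducible cases should be handled directly by the conic-fibration machinery already deployed for Theorem \ref{fregno}: in each, one exhibits two independent pencils of conics (or lines) whose general member is a curve with two points at infinity on $D$, hence isomorphic to $\mathbb{G}_m$ after enlarging $S$ so that $\mathcal{O}_S^*$ is infinite, and carrying infinitely many $S$-integral points by Theorem \ref{interi}. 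The fibers parametrized by a non-thin subset of $\mathbb{P}^1$ then give Zariski density via Corollary \ref{belo}, and the trichotomy argument (reducible fibers, ramification meeting the fibers, ramification only at infinity) applied to the two independent fibrations forces any finite collection of ramified covers to miss infinitely many integral points on suitable fibers.

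The heart of the matter, however, is the fourth case, the smooth cubic $D$, and here the strategy is to deduce the result from Theorem \ref{fregno} by exploiting the birational geometry linking $(\mathbb{P}^2, D)$ to $(\widetilde F, H)$. First I would invoke the blow-down $\pi \colon \widetilde F \to \mathbb{P}^2$ at the six points $P_1,\dots,P_6$, under which the smooth hyperplane section $H$ maps to a smooth plane cubic $D$ through the six points; thus $\pi$ restricts to a morphism $F = \widetilde F \setminus H \to X = \mathbb{P}^2 \setminus D$ which is an isomorphism away from the exceptional divisors $E_i$ and their images. Since Theorem \ref{fregno} gives (after enlarging $k$ and $S$) that $F$ has the IHP, and $F$ is simply connected so that IHP and WIHP coincide there, the task is to transfer the WIHP across this birational map. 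The transfer is legitimate because the WIHP concerns covers \emph{ramified over a non-empty divisor}: a cover of $X$ ramified over some divisor pulls back to a cover of $F$ (over the locus where $\pi$ is an isomorphism) ramified over the corresponding divisor, and by Corollary \ref{belo} and Proposition \ref{integerpre} the $S$-integral points correspond up to an enlargement of $S$ and up to the finitely many exceptional curves, which contribute only a thin (indeed lower-dimensional) discrepancy.

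The main obstacle I anticipate is controlling precisely this discrepancy between $F$ and $X$ along the exceptional locus, since the birational map is not an isomorphism there. Concretely, one must check that the integral points of $X$ whose preimages land on or near the images of the $E_i$ form a thin (or at worst non-Zariski-dense) set, so that the IHP for $F$ genuinely descends to the WIHP for $X$ rather than being corrupted by the blown-down points; this requires verifying that the divisor at infinity $D$ on the $X$-side pulls back, up to the $E_i$, to $H$ on the $F$-side, and that the notion of ramified cover is preserved in both directions. A secondary subtlety is that the WIHP only requires ramification over \emph{some} non-empty divisor, which is weaker than the hypothesis of Theorem \ref{fregno} where simple connectedness forced ramification outside $H$; but because $X$ is not simply connected (its fundamental group is related to $\mathbb{Z}$ by Lemma \ref{ausiliario}), I must be careful to rule out only the covers ramified over a divisor, allowing the unramified cyclic cover coming from the puncture. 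In practice this is exactly what the WIHP definition permits, so the argument for the smooth-cubic case reduces, once the birational transfer is made rigorous, to quoting Theorem \ref{fregno} and discarding the thin contribution of the exceptional and at-infinity loci.
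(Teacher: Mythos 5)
Your reduction of the smooth-cubic case to Theorem \ref{fregno} uses the wrong map, and this is a genuine gap rather than a presentational issue. You take $\pi \colon \widetilde F \to \mathbb{P}^2$ to be the \emph{blow-down} of the six points and claim it restricts to a morphism $F \to X$ across which the (W)IHP can be transferred birationally. But the six blown-up points $P_1,\dots,P_6$ lie on $D=\pi(H)$, so $\pi^{-1}(D)=H\cup E_1\cup\cdots\cup E_6$: the exceptional curves are contained in $F$ yet map \emph{into} $D$, i.e.\ to the locus at infinity of $X$. Hence $\pi$ does not restrict to a morphism $F\to X$ at all; what you actually get is an isomorphism $F\setminus(E_1\cup\cdots\cup E_6)\cong X$. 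The IHP for $F$ does not descend to $F$ minus six divisors: on the one hand $\pi_1$ jumps from trivial to $\mathbb{Z}/3\mathbb{Z}$ (note that Lemma \ref{ausiliario} computes $\pi_1$ of the complement of the cubic \emph{and a transverse line}, not of the cubic alone), and on the other hand integrality with respect to $H$ is strictly weaker than integrality with respect to $H+\sum E_i$. The non-thin set of integral points produced in the proof of Theorem \ref{fregno} lives on conics whose strict transforms meet the $E_i$, so the points that become non-integral after removing the $E_i$ form a set you have no control over; calling this discrepancy ``thin or lower-dimensional'' is unjustified, since the exceptional locus is a divisor and reduction to it can happen at arbitrary primes.

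The paper's argument uses a completely different cubic surface: writing $D=(G(x,y,z)=0)$, it takes $\widetilde F=(w^3=G(x,y,z))\subset\mathbb{P}^3$ with the coordinate projection $\pi\colon\widetilde F\to\mathbb{P}^2$, a \emph{finite} degree-$3$ map totally ramified exactly over $D$, so that $\pi^{-1}(D)=H=(w=0)\cap\widetilde F$ and $F\to X$ is finite \'etale (the universal cover of $X$). This is the structural fact your blow-down cannot supply: it makes $\pi(F(\O_S))\subseteq X(\O_S)$ after enlarging $S$, and, given covers $\pi_i\colon Y_i\to X$ ramified over a non-empty divisor that lift all of $X(\O_S)$, the fiber products $F\times_X Y_i\to F$ lift all of $F(\O_S)$ while remaining ramified (because $\pi$ is \'etale and the $\pi_i$ ramify away from $D$), contradicting the IHP of $F$. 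If you want to salvage your write-up, replace the birational transfer by this cyclic triple cover. A minor further difference: for the first three cases the paper does not rerun the fibration machinery but reduces them to the known WIHP for $\mathbb{G}_m^2$.
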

The first three cases reduce to the WIHP for $\mathbb{G}^2_m$. This is already established and there are much more general results regarding algebraic groups (see \cite{Col,C,F-Z,Z}).
The most difficult case is the one of the smooth cubic.
In fact one cannot prove the WIHP for this surface using the conics with infinitely many $S$-integral points found by Beukers. The obstruction is that they form a thin set covered by a map with ramification divisor not contained in the smooth cubic $D$. In other words, the WIHP for $X$ cannot be proved using only the $S$-integral points coming from Beukers' conics. With our method we provide a different proof of the potential Zariski density and we are able to prove the WIHP as a corollary of Theorem \ref{fregno}.

In fact, suppose that $D$ is defined in $\mathbb{P}^2$ by the equation $D=(G(x,y,z)=0)$. Then the smooth cubic surface $\widetilde F \subset \mathbb{P}^3$ of equation $S=(w^3=G(x,y,z))$ is naturally endowed with the projection morphism $\pi \colon \widetilde F \to \mathbb{P}^2$ sending $[x:y:z:w]$ to $[x:y:z]$. The pre-image of $D$ is given by $H =(w=0) \cap F$, hence it is a smooth hyperlane section of $\widetilde F$.
This map is (totally) ramified only above $D$, so $F=\widetilde F \setminus H$ defines a cyclic \'{e}tale covering of degree $3$ of $X$, and since $F$ is simply connected, it is the universal cover of $X$. We can apply Theorem \ref{fregno} to show that for an extension $k \subset K$ and a set of places $S$, the surface $F$ has the IHP. Since $\pi$ is dominant, we can apply Corollary \ref{belo} to show that there exists an enlargement of $S$ (which we still denote by $S$) such that $\pi(F(\O_{S}))\subset X(\O_{S})$. It follows $X(\O_{S})$ is Zariski-dense.

The WIHP is a consequence of the IHP for $F$.
In fact, suppose we are given finitely many finite maps $\pi_i \colon Y_i \to X$ lifting all the $S$-integral points of $X$, ramified outside $D$.
We consider the fiber product $F \times_{X} Y_i$, which fits in the following diagram
\begin{equation*}
\begin{tikzcd}
F \times_{X} Y_i \arrow{r}{p_{i,2}} \arrow{d}{p_{i,1}}
&Y_i \arrow{d}{\pi_i}\\
F \arrow{r}{\pi} &X
\end{tikzcd}
\end{equation*}
We assert that all $S$-integral points of $F$ lift to the covers $p_{i,1}$. Indeed, let us consider an $S$-integral point $q \in F(\O_S)$. By hypotheses the image $\pi(q)$ is $S$-integral in $X$. Then there is a cover $\pi_i$ and a rational point $y \in Y_i(k)$ such that $\pi_i(y)=\pi(q)$. Therefore the point $(q,y)$ lies in $F \times_X Y_i$ and $p_{i,1}(q,y)=q$. Moreover the covers $p_{i,1}$ are not trivial, since $\pi^{-1}(D)=H$ and $\pi_i$ is ramified outside $D$, and this contradicts the IHP for $F$.

\bigskip\bigskip

\vfill

Simone Coccia

Scuola Superiore dell'Universit\'{a} di Udine

Palazzo Garzolini di Toppo Wassermann

Via Gemona 92

33100 Udine - ITALY

coccia.simone@spes.uniud.it

\end{document}